\def\ps@pprintTitle{%
     \let\@oddhead\@empty
     \let\@evenhead\@empty
     \def\@oddfoot{}%
     \let\@evenfoot\@oddfoot}
\newlength{\ei}\ei=0.0138888889em   % 1/72 em  % ca.1/20 mm
\newlength{\SyW}   \newlength{\msu}  \msu=\mathsurround % \AzP=\msu gesetzt
 \newcommand{\Ts}{\textstyle}
\newcommand{\Ss}{\scriptstyle}  \newcommand{\SSs}{\scriptscriptstyle}
\newcommand{\rfia}[1]{\makebox[\parindent][l]{%
                     \makebox[0em][r]{\rm(}\sf#1\rm)}}
\newcounter{ABCcB}
\newcommand{\theABCcC}{\alph{ABCcB}}
\newcommand{\maxev}{\mathop{\rm max\hspace*{6\ei}ev}\nolimits}
\newcommand{\Ew}{\mathop{\rm {{}E{}}}\nolimits} % mathop erkennt nur (
\newcommand{\ve}{\varepsilon}
\newcommand{\ssr}{\rm\scriptscriptstyle}
\newcommand{\Lo}{\mathop{\rm {{}o{}}}\nolimits}
\newcommand{\LO}{\mathop{\rm {{}O{}}}\nolimits}
\newcommand{\tr}{\mathop{\rm{} tr{}}}
\newcommand{\B}{\mathbb B}
\newcommand{\R}{\mathbb R}
\newcommand{\Rq}{\bar{\mathbb R}}
\newcommand{\Bq}{\bar{\mathbb B}}
\newcommand{\N}{\mathbb N}
\newcommand{\Jc}{\mathop{\bf\rm{{}I{}}}\nolimits}
\newcommand{\EM} {{\mathbb I}}
\newcommand{\sR}{{\Ss \mathbb R}}
\newcommand{\sRq}{\bar{\Ss \mathbb R}}
\newcommand{\sEM} {{\Ss \mathbb I}}
\newcommand{\also}{\Longrightarrow}
\newcommand{\Tfrac}[2]{{\Ts\frac{#1}{#2}}}
\newcommand{\Tsum}{\mathop{\Ts\sum}\nolimits}
\newcommand{\wsup}{\mathop{\rm sup\vphantom{f}}\nolimits}
\renewcommand{\vec}{\mathop{\rm vec}\nolimits}
\newcommand{\vech}{\mathop{\rm vech}\nolimits}
\newcommand{\osk}{\mathrel{{\otimes}_{\!\!\!\!\!\!\!\!\!{{=}\atop{s}}}}}
\newcommand{\wto}{\mathrel{\mathsurround0em \mbox{$\longrightarrow$}%
  \llap{\settowidth{\SyW}{$\longrightarrow$}%             \protect in @{}
  \raisebox{-.15ex}{\makebox[\SyW]{\scriptsize\rm w}}}}}
\def\pth{\partial_{\theta}}
\def\pthj{\partial_{\theta_j}}
\def\px{\partial_{x}}
\def\pxi{\partial_{x_i}}
\def\pxx{\partial^2_{xx}}
\def\pxir{\partial^2_{x_ix_r}}
\def\tath{\iota_{\theta}}
\def\tathl{\iota_{\theta;l}}
\def\tathm{\iota_{\theta;m}}
\def\tth1{\tau_{\theta}}
\def\ptxo1{{( \px\tath )}^{-1}}
\def\ptth{\pth\tath}
\def\ptx{\px\tath}
\def\ptxx{\pxx\tath}
\def\dttx{|\!\det\px\tath\!|}
\newcommand{\pthI}[1]{\partial_{\theta_{#1}}}
\newcommand{\pxI}[1]{\partial_{x_{#1}}}
\newtheorem{Thm}{Theorem}%[section]   % STARTS each SECTION
\newtheorem{Prop}[Thm]{Proposition}
\newtheorem{Lem}[Thm]{Lemma}
\newtheorem{Rem}[Thm]{Remark}
\newtheorem{Cor}[Thm]{Corollary}
\newtheorem{Bez}[Thm]{Notation}
\newtheorem{Def}[Thm]{Definition}
\newtheorem{Exa}[Thm]{Example}
\numberwithin{equation}{section}
\numberwithin{Thm}{section}
\newcounter{ABCc}
\renewcommand{\theABCc}{\alph{ABCc}}
\newenvironment{ABC}{\begin{list}{%    VE LABELWIDTH 1em + LABELSEP 0.5em
  \rfia{\theABCc}}{\usecounter{ABCc} \topsep 0ex \partopsep 0ex \itemsep0ex
  \parsep=\parskip \leftmargin 0em \rightmargin 0em \itemindent=\parindent
  \listparindent=\parindent  \labelsep 0.5em \labelwidth 0.5em }}{\end{list}}
\newcommand{\seitref}{}
\newenvironment{proof}[1]
\qed\end{trivlist}}
\newcommand\unpubman{Unpublished manuscript}
 \journal{Journal of Multivariate Analysis}
\begin{document}
\begin{frontmatter}
%-----------------------------------------------------------------------------
\title{Fisher Information in Group-Type Models}
%-----------------------------------------------------------------------------
\author{Peter Ruckdeschel}
\ead{Peter.Ruckdeschel@itwm.fraunhofer.de}
\date{\today}
\address{Fraunhofer ITWM, Abt.\ Finanzmathematik,
         Fraunhofer-Platz 1, 67663 Kaiserslautern, Germany\\
     and TU Kaiserslautern, AG Statistik, FB.\ Mathematik,
         P.O.Box 3049, 67653 Kaiserslautern, Germany
}
%-----------------------------------------------------------------------------
\date{Received: date / Accepted: date}
%\maketitle
% -----------------------------------------------------------------------
\begin{abstract}
In proofs of $L_2$-differentiability, 
Lebesgue densities of a central distribution are often assumed right from 
the beginning. Generalizing  \citet[Theorem~4.2]{Hu:81}, we show that in the class of smooth
parametric group models these densities are in fact consequences of
a finite Fisher information of the model, provided a suitable representation
  of the latter is used. The proof uses  the notions of absolute continuity
in $k$ dimensions and weak differentiability. \\
As examples to which this theorem applies, we spell out a number of models
including a correlation model and the general multivariate location and
scale model.\\
As a consequence of this approach, we show that in  the (multivariate)
location scale model, finiteness of Fisher information as defined here
is in fact equivalent to $L_2$-differentiability and
 to a log-likelihood expansion giving local asymptotic normality of the model.\\
Paralleling Huber's proofs for  existence and uniqueness of a minimizer of
Fisher information to our situation, we get existence of a minimizer in
any weakly closed set ${\cal F}$ of central distributions $F$.
If, additionally to analogue assumptions to those of \citet{Hu:81},
 a certain identifiability condition for the transformation
 holds, we obtain uniqueness of the minimizer.
 This identifiability condition is satisfied in the multivariate location
 scale model.
\end{abstract}
\begin{keyword}
Fisher information\sep group models\sep multivariate location and scale model\sep
correlation estimation\sep minimum Fisher information\sep absolute continuity
\sep weak differentiability\sep LAN\sep L2 differentiability\sep smoothness;
\MSC{62H12,62F12,62F35}
\end{keyword}
\end{frontmatter}
% -----------------------------------------------------------------------
%\tableofcontents
% -----------------------------------------------------------------------
\section{Introduction}
\subsection{Motivation}
$L_2$-differentiability as introduced by LeCam and H\'ajek appears to
be the most suitable setup in which to derive such key properties as
\textit{local asymptotic normality\/} (LAN)  in local asymptotic parametric
statistics. 
In order to show this $L_2$-differentiability however, Lebesgue densities
of a central distribution are frequently assumed right from the beginning.
In this paper, we generalize \citet[Theorem~4.2]{Hu:81} from one-dimensional
location to a large class of parametric models, where these Lebesgue densities
are in fact a consequence of a finite Fisher information of the model,
provided a suitable definition of the latter is used. This definition may
then serve---again as in \citet{Hu:81}---as starting point for
minimizing Fisher information along suitable neighborhoods of the
model.\\
The framework in which this generalization holds covers smooth parametric
group models as to be found in \citet{BKRW:98}, but is valid even in a
somewhat more general setting: The idea is to link transformations in the
parameter space to transformations in the observation space. \\
The new definition of Fisher information then simply amounts to transferring
differentiation in the parameter space to
differentiation---in a weak sense---in the observation space.
This is actually done much in a Sobolev spirit, working with generalized derivatives.
\subsection{Organization of the Paper}
%
%This paper is organized as follows:
%
After an introduction to the setup of smooth parametric group models,
in section~\ref{setup}, we list the smoothness requirements for the
transformations and some notation needed for our theorem.
Before stating this theorem, in section~\ref{exasec} we first give a
number of examples to which
this theorem applies, the most general of which is the multivariate location and scale
model from Example~\ref{kls}.
Section~\ref{mtsec} provides the main result, Theorem~\ref{dasTHM}.
In section~\ref{exasec2}, we spell out the resulting Fisher information
in the examples of section~\ref{exasec}.
As announced in the motivation, in section~\ref{Cons}, culminating
in Proposition~\ref{equivprop}, we show that in  the (multivariate)
location-scale model finiteness of Fisher information is equivalent
to $L_2$-differentiability as well as to a LAN property.
Finally, in section~\ref{MinI} we generalize Huber's proofs for  existence
and uniqueness of a minimizer of
Fisher information to our situation.
The proofs are gathered in appendix section~\ref{proofs};
The proof of Theorem~\ref{dasTHM} makes use of the notions of absolute continuity
in $k$ dimensions and of weak differentiability.
Both are provided in an appendix in section~\ref{app}.
\begin{Rem}\rm\small The one-dimensional scale model, a particular case
of what is covered by this paper, has been spelt out separately, in a small
joint paper with Helmut Rieder, cf.\ \citet{Ru:Ri:10}.
\end{Rem}
\section{Setup}\label{setup}
\subsection{Notation}
$\B^k$ denotes the Borel $\sigma$-algebra on $\R^k$,
${\cal M}_1({\cal A})$ $[{\cal M}_s({\cal A})]$ the set of all probability [substochastic]
measures on some $\sigma$-algebra ${\cal A}$,
and for $\mu\in{\cal M}_1(\B)$, for $p\in[1,\infty]$, $L_p(\mu)$ is the set of all (equivalence classes of)
${\cal A}|\B$ measurable functions with $\Ew|X|^p<\infty$, resp.\ $\wsup_P|X|<\infty$.
%Correspondingly, ${\cal M}_{\SSs \leq 1}({\cal A})$ denotes the set of all substochastic measures $\mu$ on
%the $\sigma$--Algebra ${\cal A}$, i.e. all finite measures with $\mu(\Omega)\leq 1$.
$\Jc_A$ denotes the indicator function of the set $A$. $\EM_k$ is the $k$-dimensional unit matrix,
$\vec(A)$ is the operator casting a matrix to a vector, stacking the columns of $A$ over each other,
$\vech$ the operator casting the upper half of a quadratic matrix to a vector---including
the diagonal---and $A\otimes B$ the Kronecker product of matrices, and,
for $A,B\in \R^{k\times k}$, the symmetrized product
$A\osk B:=(AB+B^{\tau}A^{\tau})/2$.

For $l\in \N_0\cup \infty$ let ${\cal C}^{l}$ be the set of all
$l$ times continuously differentiable functions, where%
---if necessary---we specify domain and range in the
notation ${\cal C}^{l}(\mbox{domain},\mbox{range})$. Weak convergence of
measures $P_n\in{\cal M}_{1}(\Bq^k)$ to some measure $P\in{\cal M}_{1}(\Bq^k)$ is denoted by $P_n\wto P$.\\
Inequalities and intervals in $\R^k$ are denoted by the same symbols as in one dimension, meaning e.g.\
$l<r$ iff $l_i<r_i$,  for all $i=1,\ldots,k$,
and %for $l<r$,
$[l,r]:=\{x\in \R^k\,|\,l_i\leq x_i\leq r_i,  \quad \forall\;i=1,\ldots,k\}$.

Let $P_{\theta}\in{\cal M}_1(\B^k)$. $\R^k$ being Polish, regular conditional distributions are available,
and we may write $P_{\theta}(dx_1,\ldots,dx_k)$ as
\begin{equation}
P_{\theta}(dx_1,\ldots,dx_k)=\prod_{j=1}^{k-1}
P_{\theta;\,j|j+1:k}(dx_j|x_{j+1},\ldots, x_{k})\,\, P_{\theta;\,k}(dx_k)
\end{equation}
with $P_{\theta;\,k}$ the marginal of $X_k$ and $P_{\theta;\,j|j+1:k}$ a regular
conditional distribution of $X_{j}$, given
$X_{j+1}=x_{j+1},\ldots,X_k=x_k$. In the sequel, we write $y_{i:j}$ for the vector
$(y_i,\ldots,y_j)^{\tau}$. For a measure $G$ on ${\cal M}(\B^k)$ and a set of indices $J$ we write
$G_{J}$ to denote the joint marginal of $G$ for coordinates $i\in J$.
For $y \in \R^{k}$ define $y_{-i}:=y_{1:i-1;i+1:k}$,
and for $y \in \R^{k-1}$ and $x\in \R$ define the expression
$(x\!:\!y)_i:=(y_{1:i-1},x,y_{i:k-1})^{\tau}\in\R^k$.

\subsection{Model Definition}\label{modeldef}
For a fixed central distribution $F$ on $\B^k$, we consider a
statistical model  ${\cal P}\subset {\cal M}_1(\B^k)$ generated by a family
${\cal G}$ of diffeomorphisms $\tau:\R^k\to\R^k$ defined on the observation space.
Denote the inverse of $\tau$ by $\iota=\tau^{-1}$. This family is parametrized
by a $p$ dimensional parameter $\theta$, stemming from an open
parameter set $\Theta \subset \R^p$, and this induces the parametric model
\begin{equation}\label{modeldef}
{\cal P}=\{P_{\theta}\,|\;P_{\theta}=\tau_{\theta}(F),\quad\;\theta\in\Theta\}
\end{equation}
where $\tau_{\theta}(F)$ denotes the image measure under $\tau_{\theta}$,
$F\circ \iota_{\theta}$.%\\
\begin{Rem}\rm\small
In most examples, ${\cal G}$ will be a group, which is also the
formulation used in \citet[section~1.3]{Leh:83} and \citet[Ch.~4]{BKRW:98}.
These authors did not intend to generalize Fisher information, though, and
Example~\ref{corex} shows that for our purposes a group structure of
for the set ${\cal G}$ is not necessary.
\end{Rem}
%\\
%
%
\subsection{A Smooth Compactification of $\R^k$}
\noindent For reasons explained in Remark~\ref{rm41}, we introduce
the following compactification $\Rq^k$
 of $\R^k$:

\begin{Def}\label{compactifk}
  Let ${\cal C}^{l}([0,1]^k,\R)$, $l\in\N\cup\infty$ the space of all continuous real-valued functions on the domain $[0,1]^k$ which are differentiable
  $l$ times / arbitrarily often in $(0,1)^k$, and with existing one-sided derivatives on $\partial[0,1]^k$.
  We identify this space with functions on $\Rq^k$,
  using the isometry%\footnote{We deliberately denote this isometry by the same symbol as for $k=1$.}
  $\ell$
  \begin{equation}\label{ellk}
  \ell:[-\infty;\infty]^k\to[0,1]^k,\quad [\ell((x_j))]_i=\big[e^{x_i}/(e^{x_i}+1)\big]_i
  \end{equation}
  i.e.\ let
  \begin{equation}
    {\cal C}^{l}(\Rq^k,\R):={\cal C}^{l}([0,1]^k,\R)\!\circ\! \ell=\{\varphi\,|\,\varphi=\psi\!\circ\! \ell\quad\exists \psi \in {\cal C}^{l}([0,1]^k,\R)\}
  \end{equation}
\end{Def}
For later purposes we also note the inverse of $\ell$
  \begin{equation} \label{ell-1}
  \kappa:[0,1]^k\to [-\infty;\infty],\quad \kappa(y_1,\ldots,y_k)=\big(\log (y_j/(1-y_j))\big)_{j=1,\ldots,k}
  \end{equation}
In the same manor, unbounded, continuous functions are defined and denoted by
%  \begin{equation}\label{compactifkbar}
%$    {\cal C}^{l}(\Rq^k,\Rq):={\cal C}^{l}([0,1]^k,\Rq)\!\circ\! \ell,$,
$    {\cal C}^{l}(\Rq^k,\Rq^m)
%:=[{\cal C}^{l}(\Rq^k,\Rq)]^m
$.
%  \end{equation}

\begin{Rem}\label{D2Rem}\rm\small
\begin{ABC}
\item With this definition, $\Rq^k$ becomes a compact metric space.
\item Integrations along $\Rq^k$ are understood as
lifted onto $[0,1]^k$ by $\ell$, i.e.\
  $ \int_{\sRq^k}  f\, dP=\int_{[0,1]^k}
  f\!\circ\!\kappa\, d[\ell\!\circ\! P]$.
 \item The choice of $\ell$ resp.\ $\kappa$ is arbitrary to some extent,
       but satisfactory for our needs; in fact, we only
       have to impose $\ell\in{\cal C}^{\infty}(\R^k,\R^k)$,
       $\lim_{x\to-\infty}(\ell((x\!:\!y)_i))_i=0$, $\lim_{x\to\infty}(\ell((x\!:\!y)_i))_i=1$, for each $y\in\R^{k-1}$,
        $\ell$ strictly isotone in each coordinate,
       $|\ell'(x)D(\tth1(x))| \in L_2(F)$, or, for uniformity in
       ${\cal M}_1(\B^k)$, $\sup_x|\ell'(x)D(\tth1(x))| <\infty$.
\item For every $\varphi \in {\cal C}^{\infty}(\Rq,\R)$, the
limits $\lim_{x\to\pm\infty}\varphi(x)$ exist and
  $\lim_{x\to\pm\infty}\frac{d^l}{dx^l}\varphi(x)=0$ for $l\geq 0$, as
  is easily seen using the chain rule and by the fact that each summand
  arising in a derivative has at least a factor decaying as $\exp(-|x|)$.
  This also implies that there are functions
  $\tilde \varphi:\bar R\to\R$ which do not lie
  in ${\cal C}^{\infty}(\Rq,\R)$
  but which are in ${\cal C}^{\infty}(\R,\R)$,
  have existing $\lim_{x\to\pm\infty}\tilde \varphi(x)$, and
  for which
  $\lim_{x\to\pm\infty}\frac{d^k}{dx^k}\tilde\varphi(x)=0$ for $k\geq 0$:
  Take $1/(x^2+1)$, which has no exponentially decaying derivatives.
  \item Consequently,
  for all $\varphi\in {\cal C}^{\infty}(\Rq^k,\R)$,
  $\int |b\varphi'|\,d\lambda$ is finite for any bounded,
  measurable function $b$ and
  $\lim_{|x|\to\infty} \varphi(x)' |x|^k=0$ for all $k\in\N$,
  hence in particular is
        in $L_\infty(P)$ for every probability $P$ on $\B$.
\item If we allow for mass of $\ell\!\circ\! P$ in
$[0,1]^k\setminus(0,1)^k$---corresponding to measures in
${\cal M}_s(\B^k)$---the class ${\cal C}_c^{k}(\R,\R)$ of compactly supported functions in
${\cal C}^{k}(\R,\R)$ cannot distinguish any
measures $P_1\not=P_2$ on $\Bq^k$ coinciding on $\B^k$,
whereas ${\cal C}^{\infty}(\R^k,\R)$ is measure determining on $\Bq^k$.
\item
The measures $P_{\theta}$ arising in our model from subsection~\ref{modeldef}
are understood as members of ${\cal M}_1(\Bq^k)$, defining
$P_{\theta}(A)=P_{\theta}(A\cap \R^k)$ for $A\in \Bq^k$.
\end{ABC}
\end{Rem}
\subsection{Assumptions}
Throughout this paper, we make the following set of assumptions concerning the transformations $\tau$,
which are needed to link differentiation w.r.t.\ $\theta$ to differentiation w.r.t.\ $x$:
\begin{description}
\item[(I)]
$
P_{\theta_1}=P_{\theta_2}\qquad\iff\qquad\theta_1=\theta_2
$.
\item[(D)] %For $\lambda(dx)$--a.e.\ $x$,
$\theta\mapsto \tath(x)$ is differentiable with derivative
$\pth \tath(x)$.
\item[(Dk)] If $k>1$, %for $F(dx)$--a.e.\ $x$,
$x\mapsto \tath(x)$ is twice differentiable with second derivative
$\ptxx(x)$.
\item[(C1)] If $k=1$, %For $\lambda(dx)$--a.e.\ $x$,
$x \mapsto D$ is in ${\cal C}^1(\Rq,\Rq^p)$ and $x\mapsto e^{-|x|}D\!\circ\!\tth1(x)$ is in $L^{p}_{2}(F)$ with
\begin{equation}\label{D1def}
D=D_{\theta}^{(1)}(x)=\ptth(x)/\ptx(x)
\end{equation}
\item[(Ck)] If $k>1$, %for $F(dx)$--a.e.\ $x$,
$x\mapsto D$ is in ${\cal C}^1(\Rq^k,\Rq^{k\times p})$, $x\mapsto e^{-|x|}D\!\circ\!\tth1(x)$
is in $L^{k\times p}_{2}(F)$ and
$x\mapsto V\!\circ\!\tth1(x)$ is in $L^{p}_{2}(F)$
with \begin{eqnarray}
J&=&(J_{\theta}(x))_{i,j=1,\ldots,k}=(\ptxo1)_{i,j}(x)
\label{Jdef}\\
D&=&(D^{(k)}_{\theta}(x))_{{i\!=\!1\ldots k}\atop{j\!=\!1\ldots p}}= [J^{\tau} \pth \tath]_{i,j}(x)
\label{Ddef}\\
V&=&(V_{\theta}(x))_{j\!=\!1\ldots p}= \frac{[\Tsum_{i=1}^k \pxi (\dttx\, D_{i,j})- \pthj \dttx\,]_j}{\dttx} (x)
\label{Vdef}
\end{eqnarray}
%\item[(E)]
%$P_{\theta}(K)<1$ with
%\begin{equation}
%K:=\{D=0 \}\label{K-def}
%\end{equation}
\end{description}
\begin{Rem}\rm\small
%Recall Cramer's rule,
%\begin{equation}
%(A^{-1})_{j,i}=\frac{\det A^{i,j}}{\det A}, \label{Cramer}
%\end{equation}
%so that we have by Laplace's expansion of a determinant and  \eqref{Cramer}
Using
%\begin{eqnarray}
$\frac{\partial}{\partial A_{i,j}} \det A
%&=& \lim_{h\downarrow 0} \frac{1}{h}\left[\det(a_1,\ldots,a_{j-1},a_j+he_i,a_{j+1}\ldots,a_k)-\det A\right]= \nonumber\\
%&=&\det A^{i,j}
= (A^{-1})_{j,i}\det A %\label{ainv1}
$%\end{eqnarray}
%Also note that
and
%\begin{eqnarray}
$\frac{\partial}{\partial A_{k,l}} (A^{-1})_{i,j} %&=& \lim_{h\downarrow 0} \frac{1}{h}\left[\big((A+he_ke_l^{\tau})^{-1}\big)_{i,j}-(A^{-1})_{i,j}\right]= \nonumber\\
%&=&
- (A^{-1})_{i,k}(A^{-1})_{l,j}%\label{ainv2}
$
%\end{eqnarray}
%With \eqref{ainv1} and \eqref{ainv2} and using
and the chain rule of differentiation, one can show
%by straightforward calculations thats
\begin{eqnarray}
V_j&=&[{\dttx} ]^{-1}\big[\Tsum_{i=1}^k \{\pxi (\dttx\, D_{i,j}) \}- \pthj \dttx\big]=\nonumber \\
%&=&[{\dttx} ]^{-1}\big[\Tsum_{i=1}^k \{ D_{i,j} (\pxi \dttx)  + (\pxi D_{i,j})\dttx\}-%\nonumber\\
%&&\quad -
%\pthj \dttx \big]= \nonumber\\
%&\stackrel{\mbox{\tiny\eqref{ainv1}}}{=}&\Tsum_{i,l,m,r=1}^k \{ J_{l,i} \pthj\tathl   J_{r,m }\pxir\tathm   \} +\nonumber\\
%&&\quad + \Tsum_{i,l=1}^k \{[\pxi  J_{l,i}]  \pthj\tathl + J_{l,i} [\pxi \pthj\tathl]  \}- \nonumber\\
%&&\quad - [{\dttx} ]^{-1}\pthj \dttx=\nonumber\\
%&\stackrel{\mbox{\tiny\eqref{ainv2}}}{=}&\Tsum_{i,l,m,r=1}^k   (J_{l,i} J_{r,m} - J_{l,r}J_{m,i} ) \, \pxir\tathm \, \pthj\tathl
% + \nonumber \\
%&&\quad + \Tsum_{i,l=1}^k \{J_{l,i} [\pxi \pthj \tathl]  - J_{l,i} [\pxi \pthj \tathl]   \} =\nonumber\\
&=&
\Tsum_{i,l,r,m=1}^k   (J_{l,i}J_{r,m} - J_{l,r}J_{m,i}) \, \pxir\tathm \, \pthj\tathl ,\label{Vnummer}
\end{eqnarray}
which motivates requirement (Dk).
\end{Rem}
In the sequel we use these abbreviations:
\begin{Bez}\rm
The set $\{D=0 \}$ is denoted by $K$.
With $e_i$ the $i$-th canonical unit vector in $\R^k$
and some $a\in\R^p$ and $y\in\R^{k-1}$, define
\begin{eqnarray}
\label{Vadef}
V_{a}&:=&V^{\tau} a,\qquad
D_{a}:=Da,\qquad
D_{a;i}:=e_i^{\tau}D_a,\qquad
K_i:=\{e_i^{\tau}D=0\} \label{Kidef}
%\label{Edef}
%E_{a,i,y}&:=&\{x \in\R\,|\,D^{(k)}_{a;i}((x\!:\!y)_i)=0\}.
\end{eqnarray}
Also, for later purposes---c.f. \eqref{IdefkkF}---we introduce the functions
\begin{eqnarray}
\tilde D&=& (\tilde D^{(k)}_{\theta}(x))_{{i\!=\!1\ldots k}\atop{j\!=\!1\ldots p}}= [\pth \tath \circ \tau_{\theta}]_{i,j}(x),\quad %\label{tDdef}\\
\tilde V=(\tilde V_{\theta}(x))_{j\!=\!1\ldots p}=V_{\theta} \circ \tau_{\theta} \label{tVdef}\\
\tilde V_{a}&:=& \tilde V^{\tau} a,\qquad
\tilde D_{a}:=\tilde Da, \qquad %\label{tVadef}\\
\tilde D_{a;i}:= e_i^{\tau} \tilde Da,\qquad
\tilde K_i:=\{e_i^{\tau}\tilde D=0\} \label{tKidef}
\end{eqnarray}
Finally, if $F\ll \lambda^k$, we write $f_{\theta}$ for $f\!\circ\! \iota_{\theta}$, with $f$ a $\lambda^k$ density  of $F$.
\end{Bez}
We also introduce the following decomposition of $P_{\theta}$:
%We decompose $K^c$ disjointly according to the following scheme
%\begin{eqnarray}
%&&K_i=\{D^{(k)}_{i}\not=0\},\qquad K_1^0:=K_1,\quad K_i^0=K_i\cup \bigcup_{l\leq i}(K_l^0)^c,\label{kdec}\\
%&&K^c=\dot{\bigcup_{1\leq i\leq k}} (K_i^0)^c,
%\end{eqnarray}
%and then decompose $P_{\theta}$ according to
\begin{equation}
P_{\theta}:=P^{(0)}_{\theta}+\bar P^{(0)}_{\theta},\qquad \bar P^{(0)}_{\theta}(\cdot):=P_{\theta}(\cdot\cap K).\label{p0def}
\end{equation}
%
%
%We will give it in two versions, in order to be able to cover the case that the distribution of
%some $k-s$--dimensional projection of $X$ does not admit a Lebesgue density, but that the projection
%of $X$ onto the corresponding orthogonal complement actually is sufficient for the parametric problem, and that this
%latter projection does admit a a Lebesgue density. This gives rise to assumption (Es) and will be relevant in the correlation model from
%Example~\ref{corex}. In most cases however, we will work with assumption (E), which
%is much more easily assessible.
%
%
\section{Examples}\label{exasec}
For the following seven popular examples we spell out the transformations $\tau_\theta(x)$ and the respective
parameter space and verify the assumptions from the preceding section.
\begin{Exa}[one-dim.\ location]\label{1dimlok}\sf $\tau_{\theta}(x):=x+\theta$, $\theta\in\Theta_1=\R$, $p=k=1$.\\
 For each $\theta \in \Theta_1$, $\tau_{\theta}(\cdot)$ is  a diffeomorphism; assumptions (I), (D)
 and (C1) are satisfied---$\ptth=-1$, $\ptx=1$, $D(x)=-1$, $K=\emptyset$---any observation $x$ is informative for this problem.
\end{Exa}
\begin{Exa}[$k$-dim.\ location, $k>1$]\label{multilok}\sf $\tau_{\theta}(x):=x+\theta$, $\theta\in\Theta_2=\R^k$ $p=k$.\\
 For each $\theta \in \Theta_2$, $\tau_{\theta}(\cdot)$ is  a diffeomorphism; assumptions (I), (D), (Dk),
 and (Ck) are satisfied--- $\ptxx=0$, $\ptth=-\ptx=D=-\EM_k$, $V=0$, $K=\emptyset$---any observation $x$ carries information for this problem.
\end{Exa}
\begin{Exa}[one-dim.\ scale]\label{onescale}\sf $\tau_{\theta}(x):=\theta x$, $\theta\in\Theta_3=\R_{>0}$, $p=k=1$.\\
 For each $\theta \in \Theta_3$, $\tau_{\theta}(\cdot)$ is  a diffeomorphism; assumptions (I), (D)
 and (C1) are satisfied--- $\ptth=-x\!/\theta^2$, $\ptx=1\!/\theta$, $D(x)=-x\!/\theta$.
 Thus $K=\{0\}$, hence the point $x=0$ is not informative for this problem,
 and any $x\not=0$ is.
\end{Exa}
\begin{Exa}[one-dim.\ loc.\ and scale]\label{onels}\sf $\tau_{\theta}(x):=\theta_2 x+\theta_1$,
% with parameter set
$\theta\in \Theta_4=\Theta_1\times\Theta_3$, $k=1$, $p=2$.\\
 For each $\theta\in\Theta_4$, $\tau_{\theta}(\cdot)$ is  a diffeomorphism; assumptions (I), (D)
 and (C1) are satisfied---consider $\ptth=-(\frac{1}{\theta_2};\frac{x-\theta_1}{\theta_2^2})^{\tau}$,
 $\ptx=\frac{1}{\theta_2}$, $D(x)=-(1;\frac{x-\theta_1}{\theta_2})$, $K=\emptyset$---any observation $x$ 
carries information for this problem.
\end{Exa}
\begin{Exa}[correlation, $k=2$; $p=1$]\label{corex}\sf To $\sigma_1,\sigma_2>0$ known let
$\theta\in\Theta_5=(-1;1)$
\begin{equation}\label{cortaudef}
\tau_{\theta}:\R^2\to\R^2,\quad
x\mapsto\tau_{\theta}(x):=J_{\theta}x,\qquad J_{\theta}= %(1-\theta^2)^{-1/2}
\left( \begin{array}
 {cc}\sigma_1(1-\theta^2)^{\frac{1}{2}}&\theta \sigma_1\\0&\sigma_2
 \end{array}
%\begin{array}
%{cc}\sigma^{-1}_1& -\theta\sigma_2^{-1}\\0&(1-\theta^2)^{1/2}\sigma_2^{-1}
%\end{array}
\right)x;
\end{equation}
 In contrast to all other examples considered here, this family does not form a group;
this may easily be seen, as $J_\theta^{-1}$ does not admit a representation according to \eqref{cortaudef}.
For each $\theta\in \Theta_5$, $\tau_{\theta}(\cdot)$ is  a diffeomorphism; assumptions (I), (D), (Dk),
 and (Ck) are clearly satisfied---$\ptxx=0$, $V=0$, $\ptth=(1-\theta^2)^{-\frac{3}{2}}(x_1\theta\sigma_1^{-1}-x_2\sigma_2^{-1},0)^{\tau}$,
 $$\ptx=(1-\theta^2)^{-\frac{1}{2}}\left(
 \begin{array}
 {cc}\sigma_1^{-1}&-\theta \sigma_2^{-1}\\ 0&(1-\theta^2)/\sigma_2
 \end{array}\right),%\qquad
% J=\left(
% \begin{array}
% {cc}\sigma_1(1-\theta^2)^{\frac{1}{2}}&\theta \sigma_1\\0&\sigma_2
% \end{array}\right),
$$
 $D=([\theta x_1-\frac{\sigma_1}{\sigma_2} x_2]/(1-\theta^2),0)^{\tau}$.
  As $K=\{x \in\R^2\,|\, \exists \rho \in \R \;:\; x=\rho (\sigma_1;\theta\sigma_2)^{\tau}\}$, % each $x\in K^c$,
% carries information about $\theta$; so
 $P(K)<1$ holds, as long as ${\rm supp}(P_{\theta})$ is not contained in the line
 $\{\rho (1;{\theta\sigma_2}/{\sigma_1})^{\tau},\;\rho \in\R\}$ or equivalently, as long as
 ${\rm supp}(F)\not \subset
 \{ \rho((1-\theta^2)^{\frac{1}{2}};\theta)^{\tau},\;\rho \in\R\}$.% but as we will see, there will be at least
\end{Exa}
\begin{Exa}[$k$-dim.\ scale, $k>1$]\label{multscal}\sf $\tau_{\theta}(x):=\theta x$, defined for %the  parameter set
$\Theta_6=\{S\in \R^{k\times k} \,|\, S=S^{\tau}\succ 0\}$, $p={k+1\choose 2}$. The symmetry restriction is imposed on
$\R^{k\times k}$, allowing only for symmetric variations in the parameter.\\
 Again, for each $\theta\in\Theta_6$, $\tau_{\theta}(\cdot)$ is  a diffeomorphism; assumptions (I), (D), (Dk),
 and (Ck) are satisfied---$\ptxx=0$, $\pxI{i}\tathl=(\theta^{-1})_{l,i}$,
 \begin{eqnarray*}
 \pthI{i_1,\!i_2}\tathl&=&-\frac{1}{2}[(\theta^{-1})_{l,i_1}(\theta^{-1}x)_{i_2}+(\theta^{-1})_{l,i_2}(\theta^{-1}x)_{i_1}],\\
 D&=&D_{i;j_1,\!j_2}=\frac{1}{2}
 [(\EM\otimes \theta^{-1}x)_{i,j_1,\!j_2}+(\EM\otimes \theta^{-1}x)_{i,j_2,\!j_1}],\qquad V=0.
 \end{eqnarray*}
%As can be seen considering the sets $A^{-1}U_i$ for $A\in {\rm GL}(k)$ and
%$U_i=\{x\in\R^k\,|,x=(y_{1:i-1},x_i,y_{i+1:k}),\;x_i=0\}$,
%up to a $\lambda^{k-1}$-null set of values $y$, these sets only consist in isolated points;
%but, a
For each symmetric matrix $a\in {\rm GL}(k)$, we have $D(x)a=\theta^{-1}a\theta^{-1}x$; $K=\{0\}$---any 
observation $x\not=0$ carries information for this problem.
\end{Exa}
\begin{Exa}[$k$-dim.\ location and scale, $k>1$]\label{kls}\sf $\tau_{\theta}(x):=\theta_2x+\theta_1$,  for
$\theta\in\Theta_7=\R^k\times\Theta_6$, $p=k+{k+1\choose 2}=k(k+3)\!/2$.\\
 For each $\theta\in\Theta_7$, $\tau_{\theta}(\cdot)$ is  a diffeomorphism; assumptions (I), (D)
 (Dk), and (Ck) are satisfied---$\ptxx=0$, $V=0$, $\ptx=\theta_2^{-1}$;
 splitting off the indices for the parametric dimensions into the location part [a single index] and the scale part [a double index],
 we get
 \begin{eqnarray*}
 \pthI{i}\tathl&=&-(\theta_2^{-1})_{i;l},\qquad\\
   \pthI{i_1,\!i_2}\tathl&=&-\frac{1}{2}[(\theta_2^{-1})_{l,i_1}\big(\theta_2^{-1}(x-\theta_1)\big)_{i_2}+
   (\theta_2^{-1})_{l,i_2}\big(\theta_2^{-1}(x-\theta_1)\big)_{i_1}]; \\
D_{i,l}&=&-\EM_{i;l},\\
   D_{i_1,\!i_2,l}&=&-\frac{1}{2}[\big(\EM\otimes\theta^{-1}_2(x-\theta_1)\big)_{l,i_1,\!i_2}+
   \big(\EM\otimes\theta_2^{-1}(x-\theta_1)\big)_{l,i_2,\!i_1}].
\end{eqnarray*}
Just as in Example~\ref{multilok}, any observation $x$ carries information for this problem.
\end{Exa}
\section{Main Theorem} \label{mtsec}
%
%With these preparations we may now state the main result,
%generalizing \citet{Hu:81}:
In \citet[Definition~4.1 and Theorem~4.2]{Hu:81}, we find a result
on the Fisher information in the one dimensional location case which is central
for the famous minimax M estimator result of \citet{Hu:64}.
The idea is to express Fisher information as a supremum, i.e.\
\begin{equation} \label{Ihudef}
{\cal I}(F):=\sup \Big\{\frac{\left(\int \varphi'\,dF\right)^2}%
{\int \varphi^2 \,dF}\, \Big|\qquad 0\stackrel{[F]}{\not=} \varphi\in{\cal D}_1 \Big\}.
\end{equation}
With this definition, \citet[Thm~4.2]{Hu:81} achieves a representation of
Fisher information without assuming densities of the central
distribution: ${\cal I}(F)$ is finite iff $F$ is a.c.\ with a.c.\
Lebesgue density $f$ such that $\int (f'/f)^2 f\,dx
<\infty$, which in this case is just ${\cal I}(F)$.
%

%
%\subsection{A Larger Set of Test Functions}\label{LargeSubclass}
%
\begin{Rem}\label{rm41}\rm\small
\begin{ABC}
\item The proof in \cite{Hu:81} is credited to T.~Liggett and
is based on Sobolev-type ideas; we take these up to generalize
the result to more general models and higher dimensions.
\item The set ${\cal D}_{1}$ in \eqref{Ihudef} plays the r\^ole
of a set of test functions as in the theory generalized functions,
compare \citet[Ch.~6]{Ru:91}. In the cited reference, Huber uses
${\cal D}_{1}={\cal C}_c^1(\R ,\R)$, the subset
 of compactly supported functions in ${\cal C}^1(\R,\R)$.
In the proof later, we will need that the sets
\begin{equation} \nonumber
{\cal D}_{D;i,j}:=\{D_{a;j}\,\partial_{x_{i}}\phi\,|\,\phi\in{\cal D}_{k},\;a\in\R^p\}
\end{equation}
are dense in $L_2(P_\theta^{(j)})$. Contrary to the one-dimensional location case, 
for ${\cal D}_{k}={\cal C}_c^1(\R^k ,\R)$ and general
$D_{a;j}$, we did not succeed to prove this; nor can we work
with ${\cal D}_{k}={\cal C}_{c1}(\R^k ,\R)$, the set of continuously
differentiable functions with compactly supported derivatives, 
as used for the one dimensional scale model in \citet[Lem.~A.1]{Ru:Ri:10}:
The crucial approximation of the constant function $1$ by 
functions $\phi\in {\cal C}_{c1}(\R^k ,\R)$, with $|\phi|\leq 1$, 
$|D_{a;j}\,\partial_{x_{i}}\phi|\leq 1$, and $|D_{a;j}\,\partial_{x_{i}}\phi|\to 0$
pointwise, fails for functions $D_{a;j}$  growing
faster than $|x|$ for large $|x|$.
Hence, instead we use the larger set
${\cal C}^{\infty}(\Rq^k,\R)$ from Definition~\ref{compactifk}.
\end{ABC}
\end{Rem}
%\subsection{Statement of the Theorem}
%%%%%%%%%%%%%%%%%%%%%%%%%%%%%%%%%%%%%%%%%%%%%%%%%%%%%%%%%%%%
%neue Definition
%%%%%%%%%%%%%%%%%%%%%%%%%%%%%%%%%%%%%%%%%%%%%%%%%%%%%%%%%%%%
%
%
\begin{Def}\label{Itdef}
In model~${\cal P}$ from \eqref{modeldef}, assume
assume (I) and (D). Let $a\in\R^p$, $|a|=1$.
\begin{description}
\item[\textbf{$\boldmath{k=1}$:}]
Assume (C1). Let ${\cal D}_{1}={\cal C}(\Rq^1,\R)$, $D$  from \eqref{D1def}.
Then for $\theta\in\Theta$ we define
\begin{equation}\label{Idefk1}
{\cal I}_{\theta}(F;a):=\sup\Big\{\frac{\Big(\int [\varphi' D_a]\, dP_{\theta} \Big)^2}
                                 {\int \varphi^2 \, dP_{\theta} } \;\Big|\;
\qquad 0\stackrel{[P_{\theta}]}{\not =} \varphi \in{\cal D}_1 \Big\},
\end{equation}
\item[\textbf{$\boldmath{ k>1}$:}]
Assume (Dk) and (Ck). Let ${\cal D}_{k}={\cal C}(\Rq^k,\R)$, $D$ and $V$ from \eqref{Ddef} and \eqref{Vdef}.
Then for $\theta\in\Theta$ we define
\begin{equation}\label{Idefkk}
{\cal I}_{\theta}(F;a):=
\sup \Big\{\frac{\Big(\int [\nabla\varphi^{\tau} D_a + \varphi V_a] \, dP_{\theta} \Big)^2 }
          {\int \varphi^2 \, dP_{\theta}}  \;\Big|\;
\qquad 0\stackrel{[P_{\theta}]}{\not =} \varphi
\in{\cal D}_k\Big\}.
\end{equation}
\end{description}
\end{Def}
\begin{Rem}\rm\small
\begin{ABC}
\item As $\tau_{\theta}$, resp.\ $\iota_{\theta}$ map ${\cal D}_k$ onto itself, we may use the identification $\psi=\varphi\circ\tau_{\theta}$
to see that by the transformation formula
\begin{equation}\label{IdefkkF}
{\cal I}_{\theta}(F;a):=
\sup \Big\{\frac{\Big(\int [\nabla\psi^{\tau} \tilde D_a + \psi\circ\iota_\theta \tilde V_a] \, dF \Big)^2 }
          {\int \psi^2 \, dF}  \;\Big|\;
\qquad 0\stackrel{[P_{\theta}]}{\not =} \psi \in{\cal D}_k\Big\}.
\end{equation}
\item
In particular, the transformation formula $\int \rho(x) \,P_{\theta}(dx)=
\int \rho\!\circ\!\tth1\,dF,$ entails that
except for the correlation model of Example~\ref{corex}, finiteness
of the Fisher information for one $\theta\in \Theta$ implies finiteness for every $\theta\in \Theta$: Indeed, considering
$D_{\theta}^{(k)}\!\circ\! \tth1$ in all these models, we see that in every case, $D_{\theta}^{(k)}\!\circ\! \tth1=D_{\rm\Ss id}^{(k)}$,
where we write $\rm id$ referring to the parameter-value $\theta$ yielding $\tath={\rm id}$, while at the same time $V=0$.\\
So in fact we could define \textit{the} Fisher information of $F$ for one reference parameter,
and its finiteness then entails finiteness in the whole parametric model.
\item In general, finiteness will however depend on the actual parameter value,
which is why we define Fisher information at $F$ with reference to $\theta$, notationally transparent
as ${\cal I}_{\theta}(F;a)$.
\end{ABC}
\end{Rem}
%
%%%%%%%%%%%%%%%%%%%%%%%%%%%%%%%%%%%%%%%%%%%%%%%%%%%%%%%%%%%%
%Das Haupttheorem
%%%%%%%%%%%%%%%%%%%%%%%%%%%%%%%%%%%%%%%%%%%%%%%%%%%%%%%%%%%%
%
With Definition~\ref{Itdef} we generalize \citet[Thm.~4.2]{Hu:81} to
\begin{Thm}\label{dasTHM}
In model ${\cal P}$ from \eqref{modeldef} assume that for some fixed $\theta\in\Theta$, (I), and, if $k=1$,
(D), and (C1),  resp., if $k>1$, (Dk) and
(Ck) hold. Then (the sets of) statements (i) and (ii) are equivalent:
\renewcommand{\labelenumi}{(\roman{enumi})}
\begin{enumerate}
\item
$\sup_{a:\,|a|=1}{\cal I}_{\theta}(F;a)<\infty$
\item
\begin{enumerate}
\item $F$ admits a $\lambda^k$ density $f$ on $\tath(K^c)$.
\item For every $a\in\R^p$, and $i=1,\ldots,k$
$$\lim_{|x|\to \infty} [f_{\theta}\, \dttx\,D_{a;i}]((x\!:\!y)_i)=0$$
\item For every $a\in\R^p$, and $i=1,\ldots,k$
$f_{\theta}\, \dttx\,D_{a;i}$ is a.c.\ in $k$ dimensions in the sense of Definition~\ref{ACkdim}.
\item For every $a\in\R^p$ and $1\!\leq \!i\!\leq \!k$, $[\frac{\pxI{i} (\dttx\, D_{a;i})}{\dttx} +
\frac{D_{a;i} \pxI{i} f_{\theta}}{f_{\theta}}] \in L_2(P_{\theta})$.
\end{enumerate}
\end{enumerate}
If (i) resp.\ (ii) holds,  ${\cal I}_{\theta}(F;a)=a^{\tau}{\cal I}_{\theta}(F)a$ with
\begin{equation}\label{ass1}
{\cal I}_{\theta}=\int \Lambda_{\theta}\Lambda_{\theta}^{\tau}\,dP_{\theta},\qquad
\Lambda_{\theta}=(f'\!/\!f)\!\!\circ\!\! \tath\,\, \pth \tath+\frac{\pth \dttx}{\dttx},
\end{equation}
respectively
\begin{equation}\label{ass2}
\Lambda_{\theta}=\pth p_{\theta}\!/p_{\theta} \qquad
\mbox{ with }p_{\theta}=f_{\theta}\,\dttx.
\end{equation}
\end{Thm}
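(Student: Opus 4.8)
The plan is to read the variational quantity \eqref{Idefkk} as the squared operator norm of a single linear functional and then recover a score by Riesz representation, in the Sobolev spirit credited to Liggett in Remark~\ref{rm41}. Fix $a$ with $|a|=1$ and set $L_a(\varphi):=\int[\nabla\varphi^{\tau}D_a+\varphi V_a]\,dP_{\theta}$ (the term $V_a$ being absent in the case $k=1$ of \eqref{Idefk1}, where the whole argument specializes to Huber's). Then ${\cal I}_{\theta}(F;a)=\sup_{\varphi\neq 0}L_a(\varphi)^2/\|\varphi\|_{L_2(P_{\theta})}^2$ is finite if and only if $L_a$ is bounded relative to the $L_2(P_{\theta})$-seminorm, with $\sqrt{{\cal I}_{\theta}(F;a)}=\|L_a\|$. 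Since ${\cal D}_k$ is dense in $L_2(P_{\theta})$ (here the compactness of $\Rq^k$ from Definition~\ref{compactifk} enters, and, for the finer component reconstruction below, the density of the sets ${\cal D}_{D;i,j}$ in $L_2(P_\theta^{(j)})$ flagged in Remark~\ref{rm41}), a bounded $L_a$ extends to all of $L_2(P_{\theta})$ and the Riesz representation theorem yields a unique $\Lambda_a\in L_2(P_{\theta})$ with $L_a(\varphi)=\int\varphi\,\Lambda_a\,dP_{\theta}$ for every $\varphi\in{\cal D}_k$.

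For the implication (i)$\Rightarrow$(ii) I would interpret this representation as weak differentiability. The identity $\int\nabla\varphi^{\tau}D_a\,dP_{\theta}=\int\varphi(\Lambda_a-V_a)\,dP_{\theta}$ says that, coordinate by coordinate and off the non-informative set $K$ of \eqref{p0def}, the functions $\dttx\,D_{a;i}$ weighted by the mass of $P_{\theta}$ possess an $L_2$ weak partial derivative in $x_i$. Reducing to one coordinate at a time through the regular conditional decomposition from the Notation subsection, the classical one-dimensional absolute-continuity argument produces a Lebesgue density, hence a $\lambda^k$ density $f$ of $F$ on $\tath(K^c)$, which is (a); it simultaneously shows the products $f_{\theta}\,\dttx\,D_{a;i}$ to be absolutely continuous in $k$ dimensions in the sense of Definition~\ref{ACkdim}, which is (c); the boundary terms arising when integrating by parts on the compactification $\Rq^k$ must vanish, which is exactly (b); and the weak derivative lands in $L_2(P_{\theta})$, which is (d). Matching $-\Lambda_a$ against the product-rule expansion of $\sum_i\partial_{x_i}(\dttx\,D_{a;i}\,f_{\theta})/(\dttx\,f_{\theta})$ and absorbing the piece captured by $V_a$ via the definition \eqref{Vdef} of $V$ identifies the score and, after transforming back to $F$ through \eqref{IdefkkF}, gives $\Lambda_a=a^{\tau}\Lambda_{\theta}$ with $\Lambda_{\theta}=\pth p_{\theta}/p_{\theta}$, yielding \eqref{ass1} and \eqref{ass2}.

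The converse (ii)$\Rightarrow$(i) is the routine direction. Assuming (a)--(d), condition (c) together with the vanishing boundary terms (b) justifies integrating by parts coordinate-wise on $\Rq^k$, turning $L_a(\varphi)$ into $-\int\varphi\,(a^{\tau}\Lambda_{\theta})\,dP_{\theta}$. Cauchy--Schwarz then gives $L_a(\varphi)^2\leq\|\varphi\|_{L_2(P_{\theta})}^2\int(a^{\tau}\Lambda_{\theta})^2\,dP_{\theta}$, so ${\cal I}_{\theta}(F;a)\leq a^{\tau}{\cal I}_{\theta}a<\infty$ by (d); choosing $\varphi\in{\cal D}_k$ to approximate $a^{\tau}\Lambda_{\theta}$ in $L_2(P_{\theta})$ (again by density) shows the supremum is approached, so equality holds and ${\cal I}_{\theta}(F;a)=a^{\tau}{\cal I}_{\theta}a$.

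I expect the main obstacle to be the higher-dimensional absolute-continuity step inside (i)$\Rightarrow$(ii). Unlike the one-dimensional location case, a function with $L_2$ weak partial derivatives in each coordinate direction need not be absolutely continuous in the full $k$-dimensional sense, so one cannot close the argument by a single Riesz--Fubini step: the density of $F$ must be reconstructed coordinate by coordinate, conditioning on the remaining variables and controlling the degeneracy sets $K_i=\{e_i^{\tau}D=0\}$ on which $D_{a;i}$ vanishes, before a genuine $\lambda^k$ density on $\tath(K^c)$ can be reassembled. This is precisely what forces the appendix notion of absolute continuity in $k$ dimensions (Definition~\ref{ACkdim}) and the particular compactification, and it is where establishing density of the test-function sets ${\cal D}_{D;i,j}$ in $L_2(P_\theta^{(j)})$---flagged as delicate in Remark~\ref{rm41}---does the real work.
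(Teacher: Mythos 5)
Your overall strategy --- Riesz representation of the variational functional, a coordinate\discretionary{-}{-}{-}by\discretionary{-}{-}{-}coordinate Fubini reconstruction of conditional densities, and passage from weak derivatives to absolute continuity in $k$ dimensions --- is the same as the paper's, and your converse direction (ii)$\Rightarrow$(i) matches the paper's essentially step for step. But the direct direction has a genuine gap at its very first step. You apply Riesz to the \emph{single} functional $L_a(\varphi)=\int[\nabla\varphi^{\tau}D_a+\varphi V_a]\,dP_{\theta}$ and obtain one representer $\Lambda_a$; this only yields the ``weak divergence'' identity $\int\sum_i D_{a;i}\pxI{i}\varphi\,dP_{\theta}=\int\varphi(\Lambda_a-V_a)\,dP_{\theta}$, from which you then claim, ``coordinate by coordinate,'' that each $\dttx\,D_{a;i}\,f_{\theta}$ has an $L_2$ weak partial derivative in $x_i$. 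That deduction is not available: boundedness of the sum of the coordinate functionals does not bound each summand, and a representer for the sum does not split into representers for the individual terms. The paper instead introduces the separate functionals $\tilde T_{a;i}(\varphi)=\int D_{a;i}\,\pxI{i}\varphi\,dP_{\theta}$ of \eqref{Tidef} (and their permuted versions $\tilde T_{a;i,j}$), checks that each is individually bounded and well defined on $P_{\theta}$-equivalence classes (Remark~\ref{welldef?}), and applies Riesz--Fr\'echet to each one to get the generating elements $g_{a;i}$ of \eqref{gidef}; it is these coordinate-wise representers that feed the inductive construction of the conditional densities $p_{1:i|i+1:k}$ and, via the denseness Lemma~\ref{densel} and the coordinate-permutation argument, the extension from $K_i^c$ to $K^c$. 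Your proposal has no substitute for this step, even though you correctly identify the coordinate-wise reconstruction as the crux.

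Two further omissions are worth naming. First, the Riesz identity is an identity against $P_{\theta}$, which may a priori carry a $\lambda^k$-singular part $\bar P^{(0)}_{\theta}$ concentrated on $K$ (cf.\ \eqref{p0def}); before the identity can be read as weak differentiability of $D_{a;j}p_{\theta}$ with respect to Lebesgue measure --- which is what Proposition~\ref{mazja} requires --- one must show $g_{a;i,j}=0$ $[\bar P^{(0)}_{\theta}]$, as in \eqref{gij=0}, which the paper does by approximating indicators of intervals in $K$ by test functions with controlled derivatives. Your phrase ``off the non-informative set $K$'' presupposes this rather than proves it. Second, your treatment of the boundary condition (ii)(b) is asserted rather than derived; the paper obtains it by lifting to $[0,1]^k$ via $\ell$ and an explicit approximation of boundary indicators. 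These last two points are fixable along the paper's lines, but the first point above is a structural gap in the argument as proposed.
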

\begin{Rem}\rm\small
\begin{ABC}
\item Theorem~\ref{dasTHM} also covers model~\ref{1dimlok}; however,
it uses ${\cal D}_1={\cal C}^\infty(\Rq,\R)$ instead of ${\cal C}_c^1(\R,\R)$, hence, as
${\cal C}_c^1(\R,\R)\subset {\cal C}^\infty(\Rq,\R)$, finiteness of Fisher information in Huber's
definition formally is weaker than ours, so formally our implication (ii)$\also$(i) is harder,
(i)$\also$(ii) easier than his.
\item As a consequence of using ${\cal D}_1={\cal C}^\infty(\Rq,\R)$, we need (ii)(b), which does not
show up in the corresponding Theorems
\citet{Hu:81}(one-dim.\ location).
\item In Theorem~\ref{dasTHM}, $F$ may have $\lambda^k$ singular parts on $\tath K$.
But if so, then by Corollary~\ref{clt=0} necessarily, $\Lambda_{\theta}=0$ there.
This means that these parts do not contribute any information.
\item Closedness of a.c.\ functions under products \citep[7.2~Prob.4]{Du:02} entails that
under the assumptions of Theorem~\ref{dasTHM}, whenever the map $x\mapsto [D_{a;i}p_{\theta}]((x\!:\!y)_i)$ is a.c.\
on some interval $[c,d]$ where $D_{a;i}\not=0$, so is $p_{\theta}$.
\end{ABC}
\end{Rem}
%
%%%%%%%%%%%%%%%%%%%%%%%%%%%%%%%%%%%%%%%%%%%%%%%%%%%%%%%
%
% erst den Skalenfall
%
%%%%%%%%%%%%%%%%%%%%%%%%%%%%%%%%%%%%%%%%%%%%%%%%%%%%%%%
%
%
%%
%
%
%\begin{Cor}
%\end{Cor}
%
%
\section{Fisher information in Examples}\label{exasec2}
In this section we specify the terms $\Lambda_{\theta}$ and ${\cal I}_{\theta}(F;a)$, as well as the
quadratic form in $a$, ${\cal I}_{\theta}(F)={\cal I}_{\theta}$, for Examples~\ref{1dimlok} to
\ref{kls}. In the sequel, $\Lambda_f(x):=-\px f \!/\!f$
\begin{Exa}[one-dim.\ location]\sf $\Lambda_{\theta}(x):=\Lambda_f(x-\theta)$,
${\cal I}_{\theta}={\cal I}_0=\int \Lambda_f^2 \,dF$. The supremal definition of ${\cal I}(F)$ is \eqref{Ihudef}, but with
${\cal D}_1={\cal C}^\infty(\Rq,\R)$.
\end{Exa}
\begin{Exa}[$k$-dim.\ location, $k>1$]\sf $\Lambda_{\theta}(x):=\Lambda_f(x-\theta)$,
${\cal I}_{\theta}={\cal I}_0=\int \Lambda_f\Lambda_f^{\tau} \,dF.$, ${\cal I}_{\theta}(F;a)=a^{\tau}{\cal I}_0a$.
The supremal definition of ${\cal I}(F)$ is
\begin{equation}
{\cal I}_{0}(F;a):=\sup\Big\{\frac{\left(\int \nabla \varphi^{\tau} a\,\,dF\right)^2}{\int
\varphi^2 \,dF} \;\Big|\qquad 0\stackrel{[F]}{\not=} \varphi\in{\cal
D}_k \Big\}
\end{equation}

\end{Exa}
\begin{Exa}[one-dim.\ scale]\sf $\Lambda_{\theta}(x):=\frac{1}{\theta} [(x/\theta)\Lambda_f(x/\theta)+1]$,
${\cal I}_{\theta}=\frac{1}{\theta^2}{\cal I}_1=\frac{1}{\theta^2} \int (x\Lambda_f-1)^2 \,dF
=\frac{1}{\theta^2} (\int x^2\Lambda_f^2 \,dF -1).$
The supremal definition of ${\cal I}(F)$ is
\begin{equation}
{\cal I}_{1}(F):=\sup\Big\{\frac{\left(\int x \varphi'(x)\,F(dx)\right)^2}{\int
\varphi^2 \,dF} \;\Big|\qquad 0\stackrel{[F]}{\not=} \varphi\in{\cal
D}_1\Big\}
\end{equation}
\end{Exa}
\begin{Exa}[one-dim.\ location and scale]\sf
$$\Lambda_{\theta}(x):=\frac{1}{\theta_2} \left(%\begin{array}
%{c}
\Lambda_f(\frac{x-\theta_1}{\theta_2}),\;
(\frac{x-\theta_1}{\theta_2})\Lambda_f(\frac{x-\theta_1}{\theta_2})+1%
%\end{array}
\right)^\tau,
$$
$${\cal I}_{\theta}(x):=\frac{1}{\theta^2_2}{\cal I}_{0;1}(x)= \frac{1}{\theta^2_2}\left(\begin{array}
{cc}\int \Lambda_f^2 \,dF&\int x\Lambda_f^2 \,dF\\
\int x\Lambda_f^2 \,dF& \int (x\Lambda_f-1)^2 \,dF
\end{array}\right),$$
and ${\cal I}_{\theta}(F;a)=a^{\tau}{\cal I}_0a/\theta_2$.
With $a=(a_l,a_s)^{\tau}$, the supremal definition of ${\cal I}(F)$ is
\begin{equation}
{\cal I}_{e_2}(F;a):=\sup\Big\{\frac{\left(\int (a_l+a_sx) \varphi'(x)\,F(dx)\right)^2}{\int
\varphi^2 \,dF} \;\Big|\qquad 0\stackrel{[F]}{\not=} \varphi\in{\cal
D}_1\Big\}
\end{equation}
\end{Exa}
\begin{Exa}[correlation, $k=2$; $p=1$]\label{corex2}\sf
%Let $F_{1|2}$ the first marginal of $F$ and $f_1:= F_1'$;
%then
%
\begin{equation} \label{F12-1}
\sigma_2\sigma_1 \sqrt{1-\theta^2}\, P_{\theta}(dx_1,dx_2)=f(\frac{x_1/\sigma_1-\theta x_2/\sigma_2}{\sqrt{1-\theta^2}},\frac{x_2}{\sigma_2})
 \lambda^2(dx_1,dx_2)
\end{equation}
or with $ f=f_{1|2}f_2$
\begin{equation} \label{f12-1}
\sigma_1 \sqrt{1-\theta^2} \, p_{\theta;1|2}(x_1,x_2)= f_{1|2}(\frac{x_1/\sigma_1-\theta x_2/\sigma_2}{\sqrt{1-\theta^2}}),\qquad
\sigma_2 p_{\theta;2}(x_1,x_2)=f_2(\frac{x_2}{\sigma_2})
\end{equation}
and
\begin{equation} \label{f12-2}
(1-\theta^2) \Lambda_{\theta}(x_1,x_2)= \frac{f_{1|2}'}{f_{1|2}}(\frac{x_1/\sigma_1-\theta x_2/\sigma_2}{\sqrt{1-\theta^2}})
\frac{\theta x_1/\sigma_1 - x_2/\sigma_2}{\sqrt{1-\theta^2}}   +\theta ,\quad {\cal I}_{\theta}=\int\!\! \Lambda^2_{\theta}\,dP_{\theta}
\end{equation}
The supremal definition of ${\cal I}(F)$ is
\begin{equation}
{\cal I}_{\theta}(F):=\sup\Big\{\frac{\left(\int [{\theta x_1- \sqrt{1-\theta^2}\, x_2}](\partial_{x_1} \varphi)(x_1,x_2)\,F(dx_1,dx_2)\right)^2}{({1-\theta^2})^2\int
\varphi^2 \,dF} \;\Big|\qquad 0\stackrel{[F]}{\not=} \varphi\in{\cal
D}_2\Big\}
\end{equation}
\end{Exa}
\begin{Exa}[$k$-dim.\ scale, $k>1$]\sf
We give both $\vech$ expressions and
 matrix  expressions, using symmetrized Kronecker products.
%Furthermore we define the pointwise product of $\R^{k\times k}$--matrices $A\times B:=\sum_{i,j}
%A_{i,j}B_{i,j}=\tr AB^{\tau}$.
We start with unsymmetrized versions.
\begin{eqnarray*}
\Lambda^0_{\theta}(x)&=&\theta^{-1}\Lambda_{\sEM_k}(\theta^{-1}x)),\qquad
\Lambda^0_{\sEM_k}(x)=\Lambda_f(x)x^{\tau}-\EM_k,\\
\Lambda_{\theta}(x)&=&\frac{1}{2} [\Lambda^0_{\theta}(x)+\Lambda^0_{\theta}(x)^{\tau}],\qquad
\Lambda^v_{\theta}(x)=\vech[\Lambda_{\theta}(x)],
\end{eqnarray*}
This can also be written as
$
\Lambda^v_{\theta}(x)=\vech[\theta^{-1}\osk\Lambda_{\sEM_k}(x)]
$. %\\[-8ex]
In matrix notation this yields
$$
{\cal I}_{\theta}=((\theta^{-1}\otimes\theta^{-1})\osk[\int (\Lambda_f X^{\tau}-\EM_k)^{\otimes 2}\,dF],
$$
in vector notation ${\cal I}_{\theta}= \int\Lambda^v_{\theta}(\Lambda^v_{\theta})^{\tau}\,dF.$
Working with $a=a^{\tau}\in\R^{k \times k}$, we get
$$
\vech(a)^{\tau}\Lambda^v_{\theta}(x)=\Lambda_f(\theta^{-1}x)^{\tau}\theta^{-1}a\theta^{-1}x-\tr(\theta^{-1}a),
$$
$$
{\cal I}_{\theta}(F,a)=\int (\Lambda_f(y)^{\tau}\theta^{-1}ay-\tr(\theta^{-1}a))^2\,F(dy)
$$
For symmetric $a$, the supremal definition of ${\cal I}(F)$ is
\begin{equation} \label{Ikls}
{\cal I}_{\theta}(F,a):=\Big\{\sup\frac{\left(\int \nabla \varphi(x)^{\tau}  \theta^{-1}ax\,F(dx)\right)^2}{\int
\varphi^2 \,dF} \;\Big|\qquad 0\stackrel{[F]}{\not=} \varphi\in{\cal
D}_k\Big\}
\end{equation}
\end{Exa}
\begin{Exa}[$k$-dim.\ location and scale, $k>1$]\sf
Partitioning $\Lambda$ into a location block ($\rm l$) and a scale block ($\rm s$), we get
\begin{eqnarray*}
\Lambda_{{\rm \Ss l},\theta_1,\theta_2}(x)&=&\theta_2^{-1}\Lambda_{{\rm \Ss l},0,\sEM_k}(\theta_2^{-1}(x-\theta_1)), \qquad
\Lambda_{{\rm \Ss l},0,\sEM_k}(x)=\Lambda_f(x)\\
\Lambda_{{\rm \Ss s},\theta_1,\theta_2}(x)&=&\theta_2^{-1}\Lambda_{{\rm \Ss s},0,\sEM_k}(\theta_2^{-1}(x-\theta_1)),\qquad
\Lambda_{{\rm \Ss s},0,\sEM_k}^{0}(x)=\Lambda_f(x)x^{\tau}-\EM_k\\
\Lambda_{{\rm \Ss s},0,\sEM_k}(x)&=&(\Lambda_{{\rm \Ss s},0,\sEM_k}^{0}(x)+\Lambda_{{\rm \Ss s},0,\sEM_k}^{0}(x)^{\tau})/2,\qquad
\Lambda_{{\rm \Ss s},0,\sEM_k}^v(x)=\vech(\Lambda_{{\rm \Ss s},0,\sEM_k}(x))
\end{eqnarray*}
$${\cal I}_{\theta}=\left(\begin{array}
{cc} {\cal I}_{{\rm \Ss l},{\rm \Ss l},\theta}&{\cal I}_{{\rm \Ss l},{\rm \Ss s},\theta}\\
{\cal I}^{\tau}_{{\rm \Ss l},{\rm \Ss s},\theta}&{\cal I}_{{\rm \Ss s},{\rm \Ss s},\theta}\\
\end{array}
\right)
$$
with
\begin{eqnarray*}
{\cal I}_{{\rm \Ss l},{\rm \Ss l},\theta}&=&\theta_{2}^{-1}[\int \Lambda_f\Lambda_f^{\tau}\,dF]\theta_{2}^{-1},\\
{\cal I}_{{\rm \Ss l},{\rm \Ss s},\theta}&=&\theta_{2}^{-1}\big[\int \Lambda_f\vech[\theta_{2}^{-1}\osk(\Lambda_f\,X^{\tau}-\EM_k)]^{\tau} \,dF\big]\\
{\cal I}_{{\rm \Ss s},{\rm \Ss s},\theta}&=&\int \vech[\theta_{2}^{-1}\osk(\Lambda_f\,X^{\tau}-\EM_k)]\vech[\theta_{2}^{-1}\osk(\Lambda_f\,X^{\tau}-\EM_k)]^{\tau}\,dF
\end{eqnarray*}
Working with $a=(a_l^{\tau};\vech(a_s)^{\tau})^{\tau}$, $a_l\in\R^k$ $a_s=a_s^{\tau}\in \R^{k\times k}$, we get
\begin{small}
$$
\vech(a)^{\tau}\Lambda^v_{\theta}(x)=a_l^{\tau}\theta_2^{-1}\Lambda_f(\theta_2^{-1}(x-\theta_1))+
 \Lambda_f(\theta_2^{-1}(x-\theta_1))^{\tau}\theta_2^{-1}a_s\theta_2^{-1}(x-\theta_1)-\tr(\theta_2^{-1}a_s),
$$
\end{small}
$$
{\cal I}_{\theta}(F,a)=\int (a_l^{\tau}\theta_2^{-1}\Lambda_f(y)+
 \Lambda_f(y)^{\tau}\theta_2^{-1}a_sy-\tr(\theta_2^{-1}a_s))^2\,F(dy)
$$
The supremal definition of ${\cal I}(F)$ is
\begin{equation}
{\cal I}_{\theta}(F,a):=\sup\Big\{\frac{\left(\int \nabla \varphi(x)^{\tau} \theta_2^{-1} [a_s x+a_l]\,F(dx)\right)^2}{\int
\varphi^2 \,dF} \;\Big|\qquad 0\stackrel{[F]}{\not=} \varphi\in{\cal
D}_k\Big\}
\end{equation}
\end{Exa}
{\small To keep the order of the examples as in section~\ref{exasec}, we place a remark here,
 concerning Example~\ref{corex}}
\begin{Rem}\rm\small \rm
The fact that we are dealing with a one dimensional parameter seems to indicate that
 it should be possible to treat the problem using only one dimensional densities. Factorizations~\eqref{f12-1} and \eqref{f12-2}
 seem to point into the same direction, as they seem to suggest that working with
 \begin{equation} \label{F12-2}
\sigma_1 \sqrt{1-\theta^2}\, P_{\theta}(dx_1,dx_2)=f_{1|2}(\frac{x_1/\sigma_1-\theta x_2/\sigma_2}{\sqrt{1-\theta^2}},\frac{x_2}{\sigma_2})
 \lambda(dx_1)\,F_2(\sigma_2^{-1}dx_2)
\end{equation}
 instead of \eqref{F12-1}, we could allow for any second  marginal $F_2$---possibly even $F_2 \perp \lambda$---and just focus
 on the conditional densities for each fixed $x_2$ section.\\
 Theorem~\ref{dasTHM}, however, excludes that possibility for finite Fisher information. To be fair, one has to admit that anyway, not every
 $F$ with $P_{\theta}=\tau_{\theta} F$ could be allowed for \eqref{F12-2}, but only exactly those achieving this  representation. But even then it is of rather
 marginal interest, as may be seen in the following example:\\
 Consider $Y_1\sim {\cal N}(0,1)$, $Y_2\sim \pm 1$ with $P(Y_2=1)=P(Y_2=-1)=1/2$, $Y_1$, $Y_2$ independent and $F:={\cal L}(Y_1,Y_2)$.
 Then for any $\theta\in\Theta_5$, $X=\tau_{\theta}(Y)=((1-\theta^2)^{\frac{1}{2}}Y_1+\theta Y_2, Y_2),$ and recovering $\theta$ from
 observations of $X$ amounts to estimating $\Ew[X_1|X_2=x_2]$ for $x_2=\pm 1$---a task falling into the usual $\LO_P(n^{-\frac{1}{2}})$-type of
 statistical decision problems;
 if on the other hand, we take $F={\cal L}(Y_1,(1-\alpha^2)^{\frac{1}{2}}Y_1+\alpha Y_2,)$, for any $0<|\alpha|<1$, then, for
 $\theta\not=-(2-\alpha^2)^{-\frac{1}{2}}$, ${\cal L}(X)$  is concentrated on two
 %two parallel
 lines $X_2=a_i+\beta X_1$, $i=1,2$ with
 $\beta=(1-\alpha^2)^{\frac{1}{2}}/[(1-\theta^2)^{\frac{1}{2}}+\theta(1-\alpha^2)^{\frac{1}{2}}]$. But as we assume $F$ to be known, knowledge of $\beta$ is just
 as good as knowledge of $\theta$. Having fixed an observation $X^{(0)}$, $\beta$ may be recovered exactly, as soon as we have found two further
 observations $X^{(1)}$ and $X^{(2)}$ both lying on the same line as $X^{(0)}$, which will happen in finite time almost surely. Thus here a single observation
 must have infinite information on $\theta$---which is just according to our theorem.
\end{Rem}
\section{Consequences for the LAN Approach} \label{Cons}
In general finiteness of Fisher information does not imply $L_2$-differentiability
without additional assumptions like, e.g.\ that for $\lambda^k$ almost all $x$ and for all $\rho \in \R^p$ the map $s\mapsto p_{\theta+s\rho}(x)$
is a.c.\ and the Fisher information ${\cal I}_{\theta}$ is continuous in $\theta$---c.f.\ \citet[17.3~Prop.4]{LC:86}.\\
All examples from section~\ref{exasec}---except for the correlation example, Example~\ref{corex}---%
provide more structure, though. They may all be summarized in the (multivariate) location
scale model of Example~\ref{kls}.
First of all, due to the invariance/dilation relations of Lebesgue measure w.r.t.\ affine transformations,
we may limit attention to the reference parameter ($0,\EM_k$).
Even more though, we have the following generalization of Lemmas by \citet{Ha:1972} (one-dimensional location)
and \citet[Ch.2, Sec.3]{Sw:80} to the multivariate location case
\begin{Prop} \label{klsl2}
Assume that in the multivariate location and scale model~\ref{kls}, Fisher information as
defined in \eqref{Ikls} is finite for some parameter value. Then the model is $L_2$-differentiable
for any parameter value.
\end{Prop}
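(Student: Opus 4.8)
The plan is to leverage the main result, Theorem~\ref{dasTHM}, which already guarantees that finite Fisher information in the sense of \eqref{Ikls} forces $F$ to admit a $\lambda^k$-density $f$ with all the attendant a.c.\ and integrability properties. By the invariance remark preceding the statement, it suffices to establish $L_2$-differentiability at the reference parameter $(0,\EM_k)$; the affine-dilation relations of $\lambda^k$ then transfer it to every $\theta\in\Theta_7$. So first I would reduce to this reference point and record the candidate $L_2$-derivative, which by \eqref{ass1}--\eqref{ass2} is $\Lambda_{(0,\sEM_k)}$, i.e.\ the stacked location block $\Lambda_f(x)$ and (symmetrized, $\vech$-ed) scale block $\Lambda_f(x)x^\tau-\EM_k$ read off from the Example~\ref{kls} computations.

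Next I would verify that this candidate score actually delivers the $L_2$-expansion $\sqrt{dP_{s\rho}}-\sqrt{dP_0}=\tfrac12 s\,\rho^\tau\Lambda\sqrt{dP_0}+o(|s|)$ in $L_2(\lambda^k)$. The standard route---this is the Hájek/Swensen mechanism being generalized---is to show that the map $s\mapsto\sqrt{p_{\theta+s\rho}}$ is differentiable in $L_2(\lambda^k)$ with the claimed derivative. Concretely, I would write $p_{(0,\sEM_k)+s\rho}(x)=f(\tau^{-1}_{s\rho}(x))\,|\!\det\partial_x\iota_{s\rho}|$ and differentiate under the square-root. The a.c.\ conclusions (ii)(c) and the $L_2$-integrability conclusion (ii)(d) of Theorem~\ref{dasTHM}, together with finiteness of $\int\Lambda\Lambda^\tau\,dF$, supply exactly the regularity needed: a.c.\ of $s\mapsto\sqrt{p_{\theta+s\rho}(x)}$ for a.e.\ $x$ with an $L_2$-dominated derivative. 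The finiteness of the integrated Fisher information then upgrades pointwise differentiability of the root-density to $L_2$-differentiability via a standard Vitali/Scheffé argument (convergence in $L_2$ of the difference quotients follows from pointwise convergence plus convergence of the $L_2$-norms, the latter being the Fisher-information integral).

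The main obstacle I anticipate is the passage from the one-dimensional-section a.c.\ statements in Theorem~\ref{dasTHM}(ii) to genuine joint ($k$-dimensional) differentiability of the root-density along an \emph{arbitrary} direction $\rho=(a_l,\vech(a_s))$ in the full $p=k(k+3)/2$-dimensional parameter. In the one-dimensional location case treated by Hájek this is immediate; here the chain rule mixes the location and scale coordinates, and the score $\rho^\tau\Lambda$ combines $\Lambda_f$ with $\Lambda_f(x)^\tau\theta_2^{-1}a_s x$ terms that grow linearly in $x$, so I must control cross terms and ensure the $L_2$-remainder is uniform enough across directions. The key is that the a.c.-in-$k$-dimensions notion of Definition~\ref{ACkdim}, applied coordinatewise as in (ii)(c), precisely packages the Fubini-type absolute continuity that lets one integrate the directional derivative along line segments in parameter space and then control the remainder by Cauchy--Schwarz against the finite quadratic form $a^\tau{\cal I}_{(0,\sEM_k)}a$. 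Once that directional $L_2$-differentiability is in hand, joint $L_2$-differentiability follows because the candidate score depends linearly and continuously on $\rho$ and the Fisher information $a\mapsto a^\tau{\cal I}_\theta a$ is a genuine quadratic form, so I would finish by assembling the directional expansions into the single multivariate expansion.
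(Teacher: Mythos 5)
Your overall strategy --- reduce to the reference parameter $(0,\EM_k)$ by the affine invariance of $\lambda^k$, then run a H\'ajek-type root-density argument fed by conclusions (ii)(a)--(d) of Theorem~\ref{dasTHM} --- is the right one, and your one-dimensional mechanism (absolute continuity of $s\mapsto\sqrt{p_{\theta+s\rho}(x)}$ with an $L_2$-dominated derivative, plus convergence of the $L_2$-norms of the difference quotients) is essentially what H\'ajek and Swensen prove and what the paper simply cites for the coordinate subproblems. The gap is in your final step. Directional $L_2$-differentiability along every ray through $(0,\EM_k)$, even with a candidate derivative that is linear and continuous in the direction $\rho$ and with $a\mapsto a^{\tau}{\cal I}_{\theta}a$ a finite quadratic form, does \emph{not} imply joint $L_2$-differentiability: the remainder is $o(|s|)$ for each fixed $\rho$, but nothing in your argument makes this uniform over $|\rho|=1$. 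This is the G\^ateaux-versus-Fr\'echet pitfall, whose finite-dimensional analogue already fails for functions on $\R^2$ possessing linear directional derivatives at a point without being differentiable there. ``Assembling the directional expansions'' is therefore not a finishing move; it is the hard part of the proposition.

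The paper closes exactly this gap with Lemmas~\ref{GHLem} and~\ref{GHLemScal} (after Garel and Hallin), which assert that for the location and scale blocks joint $L_2$-differentiability is equivalent to partial $L_2$-differentiability along each coordinate one-parameter subgroup. The proof of those lemmas is where the required uniformity is manufactured: one writes $\tau_{h}$ as a composition of the coordinate transformations, telescopes $\sqrt{p_{h}}-\sqrt{p_{0}}$ into a sum of partial increments, and exploits the translation and dilation invariance of $\lambda^k$ so that each partial remainder can be evaluated in the original $L_2(\lambda^k)$-norm; this is why the Remark following Proposition~\ref{equivprop} stresses that the argument does not extend beyond subgroups of the affine group. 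To repair your proof you would either have to import these lemmas (at which point you recover the paper's proof) or reprove their telescoping argument explicitly, using the group law of Example~\ref{kls} and the invariance of Lebesgue measure; linearity of the score in $\rho$ together with finiteness of the quadratic form will not do it on its own.
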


Hence  Theorem~\ref{dasTHM} gives  a sufficient
condition for these  models to be $L_2$-differentiable and as a consequence to be LAN.\smallskip
\par
On the other hand, $L_2$-differentiability requires finiteness of ${\cal I}_{\theta}$, so that in the multivariate
location and scale case, for all central distributions $F$, the model with central distribution $F$ is $L_2$
differentiable iff $\sup_a {\cal  I}_{\theta}(F;a)<\infty$.\\
In the i.i.d.\ setup \citet[17.3~Prop.2]{LC:86} even show that $L_2$-differentiability is both necessary and sufficient to
get an LAN expansion of the likelihoods in form
\begin{equation}\label{LANiid}
\log dP^n_{\theta+h\!/\!\sqrt{n}}/dP^n_{\theta}=\frac{1}{\sqrt{n}}\sum_{i=1}^n h^{\tau}\Lambda_{\theta}(x_i)-\frac{1}{2}
h^{\tau}{\cal I}_{\theta}h+\Lo_{P_{\theta}^n}(n^0)
\end{equation}
with some $\Lambda_{\theta}\in L_2(P_{\theta})$ and $0\prec{\cal I}_{\theta}=\Ew[\Lambda_{\theta}\Lambda_{\theta}^{\tau}]\prec \infty$, so again in the setup
of the (multivariate) location scale model of Example~\ref{kls} finiteness of Fisher information is both necessary and sufficient to
such an LAN expansion.Altogether we have
\begin{Prop}\label{equivprop}
In models~\ref{1dimlok}, \ref{multilok}, \ref{onescale}, \ref{onels}, \ref{multscal}, \ref{kls},
the following statements are equivalent
\begin{enumerate}
\item[(i)] The respective Fisher information from
        \eqref{Idefkk} is finite for any parameter value.
\item[(ii)] Conditions (ii) of Theorem~\ref{dasTHM} hold for any parameter value.
\item[(iii)] The model is $L_2$-differentiable for any parameter value.
\item[(iv)] The model admits the LAN property \eqref{LANiid} for any parameter value.
\end{enumerate}
\end{Prop}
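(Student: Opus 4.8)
The plan is to close the equivalences by reducing all four statements to the single reference parameter $(0,\EM_k)$ and then assembling three implications that are essentially available plus one that needs genuine work. First I would invoke the affine invariance of $\lambda^k$ together with the remark following Definition~\ref{Itdef}---namely that in each of Examples~\ref{1dimlok}, \ref{multilok}, \ref{onescale}, \ref{onels}, \ref{multscal}, \ref{kls} one has $D^{(k)}_\theta\!\circ\!\tath=D^{(k)}_{\rm id}$ together with $V=0$, so that finiteness of Fisher information at one parameter forces it at every parameter. This makes the quantifier ``for any parameter value'' harmless and lets ``some'' and ``every'' be used interchangeably throughout, which is exactly what is needed to glue together criteria that are stated ``for some'' (Proposition~\ref{klsl2}) with statements phrased ``for any''.

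With that reduction in place, $(\mathrm{i})\Leftrightarrow(\mathrm{ii})$ is just Theorem~\ref{dasTHM} read off at a single (hence every) parameter, the hypotheses (I), (D), (Dk), (Ck) having been verified model by model in Section~\ref{exasec}. The implication $(\mathrm{i})\Rightarrow(\mathrm{iii})$ is precisely Proposition~\ref{klsl2}, and $(\mathrm{iii})\Leftrightarrow(\mathrm{iv})$ is the cited i.i.d.\ criterion of \citet[17.3~Prop.2]{LC:86}, by which $L_2$-differentiability is equivalent to the LAN expansion~\eqref{LANiid}. None of these three links demands fresh computation beyond checking applicability in the present models.

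The one implication still to be supplied is $(\mathrm{iii})\Rightarrow(\mathrm{i})$: an $L_2$-differentiable model furnishes an $L_2$-derivative $\Lambda_\theta$ with $\int\Lambda_\theta\Lambda_\theta^\tau\,dP_\theta<\infty$, and I would identify this classical Fisher information with the variational quantity $a^\tau{\cal I}_\theta a$ of~\eqref{Idefkk}. The engine is the weak-derivative identity underlying the whole construction: for every test function $\varphi\in{\cal D}_k$,
\begin{equation}\nonumber
\int[\nabla\varphi^\tau D_a+\varphi V_a]\,dP_\theta=\int \varphi\,(a^\tau\Lambda_\theta)\,dP_\theta,
\end{equation}
obtained by transferring the $\theta$-differentiation to an integration by parts in $x$ against the density $p_\theta=f_\theta\,\dttx$, with $\Lambda_\theta=\pth p_\theta/p_\theta$ as in~\eqref{ass2}. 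Cauchy--Schwarz then bounds the numerator in~\eqref{Idefkk} by $\|\varphi\|_{L_2(P_\theta)}\,(a^\tau{\cal I}_\theta a)^{1/2}$, so the supremum is at most $a^\tau{\cal I}_\theta a<\infty$, which is $(\mathrm{i})$; approximating $a^\tau\Lambda_\theta$ by functions in ${\cal D}_k$ shows the bound is attained and the two Fisher informations coincide, consistent with Theorem~\ref{dasTHM}.

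The hard part will be this identification step---justifying the integration by parts for every $\varphi$ in the large test class ${\cal C}^\infty(\Rq^k,\R)$ and showing that no boundary contribution survives at infinity. This is exactly where the smooth compactification $\Rq^k$ of Definition~\ref{compactifk} and the exponential decay of derivatives recorded in Remark~\ref{D2Rem} are used: they guarantee that the boundary terms vanish and that $D_a$ and $V_a$ pair integrably against the $L_2$-derivative. Once the identity is established, the remaining estimates are routine applications of Cauchy--Schwarz and of the density of ${\cal D}_k$ in $L_2(P_\theta)$.
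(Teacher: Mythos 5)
Your proposal is correct and follows essentially the same route as the paper: Proposition~\ref{equivprop} is established there by assembling Theorem~\ref{dasTHM} for (i)$\Leftrightarrow$(ii), Proposition~\ref{klsl2} for (i)$\Rightarrow$(iii), the observation that $L_2$-differentiability forces finiteness of ${\cal I}_{\theta}$ for (iii)$\Rightarrow$(i), and \citet[17.3~Prop.2]{LC:86} for (iii)$\Leftrightarrow$(iv), with the reduction to a single reference parameter handled exactly as you do via the remark following Definition~\ref{Itdef}. Your Cauchy--Schwarz elaboration of (iii)$\Rightarrow$(i) merely fills in a step the paper asserts in a single sentence, and it is consistent with the paper's intent.
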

\begin{Rem}\rm\small
The proof uses the translation invariance and the transformation
property under dilations of $k$-dimensional Lebesgue measure, so there is
not much room for extensions beyond group models induced by subgroups of 
the general affine group.
\end{Rem}

\section{Minimization of the Fisher information} \label{MinI}
Representations~\eqref{Idefk1} resp.\
\eqref{Idefkk} for Fisher information allow for minimization, resp.\ to
maximization of the trace or $\maxev$ of ${\cal I}_{\theta}$ w.r.t.\ the central
distribution $P_{\theta}$ or $F$. In this paper, we settle the questions of (strict) convexity and
lower continuity just as in \citet{Hu:81}, but replace vague topology used in \citet{Hu:81} by weak topology.
This is done in order to establish existence and
uniqueness of a minimizing $F^{(0)}$ in some suitable neighborhood of the (ideal) model.
To this end
define for $a\in\R^p$, $\varphi \in {\cal D}_k$, $\|\varphi\!\circ\!\tau_{\theta} \|_{L_2(F)}\not=0$
\begin{equation}
{\cal I}_{\theta}(F;a;\varphi):=\frac{\big(\int \nabla\varphi^{\tau}D_a+\varphi V_a\, d[\tau_{\theta}F] \big)^2}{\int \varphi^2\, d[\tau_{\theta}F]}
\end{equation}
and
\begin{equation}
\bar {\cal I}_{\theta}(F):=\sup {\cal I}_{\theta}(F;a),\quad {a\in\R^p,\,\; |a|=1}
\end{equation}
\subsection{Weak Lower Semicontinuity and Convexity}
To show weak lower semicontinuity and convexity, we use that for fixed $\varphi\in{\cal D}_k$, $\varphi\not=0\;[P_\theta]$,
$F\mapsto {\cal I}_{\theta}(F;a;\varphi)$ is weak continuous (by definition) and convex (by \citet[Lemma~4.4]{Hu:81}).
Essentially we may then use that the supremum of continuous functions is lower semicontinous and the supremum
of convex functions remains convex; but some  subtle additional arguments are needed
as the set of $\varphi$'s over which we are maximizing may vary from $F$ to $F$; these can be found in
\citet[Proof to Prop.~2.1]{Ru:Ri:10}. Altogether we have shown

\begin{Prop}\label{vlc}
For each $a\in\R^p$, the mapping $F\mapsto{\cal I}_{\theta}(F;a)$ is weakly lower-semicontinuous and convex in $F\in{\cal M}_{1}(\Bq^k)$.
The same goes for $F\mapsto\bar {\cal I}_{\theta}(F)$.
\end{Prop}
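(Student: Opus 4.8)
The plan is to realize both maps as pointwise suprema of the elementary functionals $F\mapsto{\cal I}_{\theta}(F;a;\varphi)$, and then to exploit that lower semicontinuity is preserved under suprema of continuous functions while convexity is preserved under suprema of convex functions. Concretely, by \eqref{Idefkk} we have ${\cal I}_{\theta}(F;a)=\sup\{{\cal I}_{\theta}(F;a;\varphi)\,|\,\varphi\in{\cal D}_k,\;\|\varphi\circ\tau_{\theta}\|_{L_2(F)}\neq 0\}$, and $\bar{\cal I}_{\theta}(F)$ is in turn the supremum of these over $a$ with $|a|=1$; so it suffices to establish the two properties for each building block $F\mapsto{\cal I}_{\theta}(F;a;\varphi)$ with $\varphi,a$ fixed, and then to pass to the suprema.

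First I would prove weak continuity of each building block. Using the transformation formula \eqref{IdefkkF} with $\psi=\varphi\circ\tau_{\theta}$, both the numerator integral $\int[\nabla\psi^{\tau}\tilde D_a+\psi\circ\iota_{\theta}\tilde V_a]\,dF$ and the denominator integral $\int\psi^2\,dF$ are integrals against $F$ of fixed integrands that, by assumption (Ck) and the decay/boundedness properties collected in Remark~\ref{D2Rem}, are bounded and continuous on the compactification $\Rq^k$. Hence for $F_n\wto F$ both integrals converge, and on the set where the denominator is positive the ratio ${\cal I}_{\theta}(F;a;\varphi)$ is weakly continuous in $F$.

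Next I would record convexity of each building block. Writing $N_\varphi(F)=\int[\nabla\varphi^{\tau}D_a+\varphi V_a]\,d[\tau_{\theta}F]$ and $E_\varphi(F)=\int\varphi^2\,d[\tau_{\theta}F]$, both are affine, resp.\ linear and nonnegative, in $F$; since $(n,e)\mapsto n^2/e$ is jointly convex on $\R\times(0,\infty)$, the composition $F\mapsto N_\varphi(F)^2/E_\varphi(F)$ is convex. This is exactly \citet[Lemma~4.4]{Hu:81}. Taking suprema then yields lower semicontinuity of ${\cal I}_{\theta}(\cdot;a)$ as a supremum of weakly continuous maps and convexity as a supremum of convex maps; the additional supremum over $\{a:|a|=1\}$ preserves both properties, giving the statement for $\bar{\cal I}_{\theta}$.

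The main obstacle is that the index set over which the supremum is taken, namely the admissible $\varphi$ with $E_\varphi(F)>0$, depends on $F$. For lower semicontinuity this is harmless: fixing $\varphi$ admissible at the limit $F$, continuity of $E_\varphi$ forces $E_\varphi(F_n)>0$ for large $n$, so $\varphi$ is eventually admissible along the sequence and ${\cal I}_{\theta}(F_n;a)\geq{\cal I}_{\theta}(F_n;a;\varphi)\to{\cal I}_{\theta}(F;a;\varphi)$, whence the $\liminf$ inequality after taking the supremum over $\varphi$. For convexity the difficulty is genuine: a $\varphi$ admissible at a mixture $F_\lambda=\lambda F_0+(1-\lambda)F_1$ may have $E_\varphi(F_i)=0$ at an endpoint, so the naive convexity inequality for $n^2/e$ involves the ill-defined endpoint term $N_\varphi(F_i)^2/E_\varphi(F_i)$ for a $\varphi$ that is not admissible at $F_i$. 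This is handled by the approximation argument of \citet[Proof to Prop.~2.1]{Ru:Ri:10}, perturbing $\varphi$ so as to keep the denominators positive at both endpoints while controlling the loss, and it is the one step I expect to require the subtle additional care alluded to in the text preceding the statement.
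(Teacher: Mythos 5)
Your proposal is correct and follows essentially the same route as the paper: decompose ${\cal I}_{\theta}(F;a)$ as a supremum of the elementary functionals ${\cal I}_{\theta}(F;a;\varphi)$, establish weak continuity of each (via the bounded continuous integrands on $\Rq^k$) and convexity via Huber's Lemma~4.4, pass to the supremum, and defer the $F$-dependence of the admissible set of $\varphi$'s to the argument of \citet[Proof to Prop.~2.1]{Ru:Ri:10}. Your explicit treatment of why the varying index set is harmless for lower semicontinuity but genuinely delicate for convexity is a welcome elaboration of what the paper only alludes to.
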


\begin{Rem}\rm\small
Using $\Rq^k$ from Definition~\ref{compactifk}, we work with a compact definition space
right away, which moreover is endowed with a separable metric,
 so any subset of probability measures on $\Bq^k$ is tight, hence by Prokhorov's theorem weakly relatively sequentially compact.
\end{Rem}
%
%Weak lower-semicontinuity then entails
%
\begin{Cor}\label{minatt}
In any  weakly closed set ${\cal F}\subset {\cal M}_{1}(\Bq^k)$,
both $\bar {\cal I}_{\theta}$ and ${\cal I}_{\theta;a}$---for fixed $a$---%
attain their  minimum in some $F_0\in {\cal F}$.
\end{Cor}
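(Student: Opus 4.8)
The plan is to apply the direct method of the calculus of variations, combining the weak lower-semicontinuity supplied by Proposition~\ref{vlc} with the weak sequential compactness noted in the remark following it. I treat $\bar{\cal I}_{\theta}$; the argument for the fixed-$a$ functional ${\cal I}_{\theta;a}=F\mapsto{\cal I}_{\theta}(F;a)$ is verbatim the same, invoking the corresponding half of Proposition~\ref{vlc}. Both functionals are nonnegative, being suprema of ratios with nonnegative numerator (a square) and positive denominator, so the infimum $m:=\inf_{F\in{\cal F}}\bar{\cal I}_{\theta}(F)$ lies in $[0,\infty]$; if $m=\infty$ every $F\in{\cal F}$ is trivially a minimizer, and I may assume $m<\infty$.

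First I would set up the compactness. Since $\Rq^k$ is a compact metric space by Remark~\ref{D2Rem}(a), the whole family ${\cal M}_{1}(\Bq^k)$ is tight, whence by Prokhorov's theorem it is weakly relatively sequentially compact. As ${\cal F}$ is by hypothesis weakly closed, it is therefore itself weakly sequentially compact: every sequence in ${\cal F}$ has a subsequence converging weakly to a limit that again lies in ${\cal F}$.

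Next I would choose a minimizing sequence $F_n\in{\cal F}$ with $\bar{\cal I}_{\theta}(F_n)\to m$. By the compactness just established, pass to a subsequence $F_{n_j}\wto F_0$ with $F_0\in{\cal F}$. Weak lower-semicontinuity of $\bar{\cal I}_{\theta}$ (Proposition~\ref{vlc}) then gives $\bar{\cal I}_{\theta}(F_0)\le \liminf_j \bar{\cal I}_{\theta}(F_{n_j})=m$, while $F_0\in{\cal F}$ forces $\bar{\cal I}_{\theta}(F_0)\ge m$ by definition of the infimum. Hence $\bar{\cal I}_{\theta}(F_0)=m$ and the minimum is attained at $F_0$.

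There is essentially no obstacle left once Proposition~\ref{vlc} and the compactness remark are in hand: the two ingredients needed for the direct method---lower-semicontinuity of the objective and sequential compactness of the admissible set---are exactly what they provide, and weak closedness of ${\cal F}$ is precisely the condition ensuring that the weak limit of a minimizing subsequence stays admissible. The only point requiring a moment's care is to record that the infimum is finite (or to treat the degenerate case $m=\infty$ separately, as above), so that ``$\liminf=m$'' is meaningful; everything else is the textbook lower-semicontinuous-on-compact argument.
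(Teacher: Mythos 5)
Your proof is correct and is exactly the argument the paper intends: the corollary is stated as an immediate consequence of Proposition~\ref{vlc} together with the compactness remark, and your direct-method argument (minimizing sequence, Prokhorov plus weak closedness to extract a limit in ${\cal F}$, lower semicontinuity to conclude) is the standard way to fill in that step. The only addition beyond what the paper leaves implicit is your explicit handling of the degenerate case $m=\infty$, which is a reasonable bit of care.
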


%
%\begin{proof}{}
%We use weak topology throughout this proof, hence drop
%the attribute ``weak'' here.
%As a closed subset of the compact
%${\cal M}_{1}(\Bq^k)$, ${\cal F}$ is compact. Lower semicontinuity
%ensures that both the sets
%$$\{F\in {\cal F} \,|\, \bar {\cal I}_{\theta}(F)\leq r\},\qquad
%\{F\in {\cal F} \,|\, {\cal I}_{\theta}(F;a)\leq r\}%$$
%are closed for any $r\in\R$ and as subsets of ${\cal F}$,
%are compact, too. Hence by the finite intersection property of compact sets,
%$$\{F\in {\cal F} \,|\, \bar {\cal I}_{\theta}(F)\leq \inf_F {\cal I}_{\theta}(F)\},\qquad
%\{F\in {\cal F} \,|\, {\cal I}_{\theta}(F;a)\leq \inf_F {\cal I}_{\theta}(F;a)\}
%$$
%are both not empty.
%\end{proof}
%
%\subsection{Convexity}

\subsection{Strict Convexity---Uniqueness of a Minimizer}
We essentially take over the assumptions of \citet{Hu:81}; we fix $\theta\in\Theta$ and consider variations in $F$ of the following form:
For $F_i\in{\cal M}(\Bq^k)$ $i=0,1$ consider
\begin{equation}
\tilde F_t:=(1-t)F_0\!\circ\! \iota_{\theta}+ t F_1\!\circ\! \iota_{\theta}
\end{equation}
We distinguish cases ($a$) and ($\bar {\cal I}$), i.e., of a given one-dimensional projection $a\not=0$,
and the corresponding maximal eigenvalue, respectively.
\begin{Prop} \label{maxunique}
Under assumptions
\begin{enumerate}
\renewcommand{\labelenumi}{(\alph{enumi})}
\item \label{uni1} The set  ${\cal F}$ of admitted central distributions $F$ is convex.
\item \label{uni2} There is a $F_0\in{\cal P}$ minimizing
\begin{itemize}
\item[($a$)] ${\cal I}_{\theta}(F;a)$ along ${\cal F}$ and ${\cal I}_{\theta}(F_0;a)<\infty$.
\item[($\bar {\cal I}$)] $\bar {\cal I}_{\theta}(F)$ along ${\cal F}$ and $\bar {\cal I}_{\theta}(F_0)<\infty$.
\end{itemize}
\item \label{uni3} The set where the Lebesgue--density $\tilde f_0$ of $\tilde F_0$ is strictly positive is convex and contains the support of every
$\tilde F_t$ derived from some $F_1\in{\cal F}$.
\item \label{uni4}
\begin{itemize}
\item[($a$)] $\lambda^k(\{x\,|\,                                  a^{\tau}\partial_{\theta} \iota_{\theta}(x)=0\})=0$
\item[($\bar {\cal I}$)] $\lambda^k(\{x\,|\, \exists a: \,|a|=1\;\mbox{ s.t. } a^{\tau}\partial_{\theta} \iota_{\theta}(x)=0\})=0$
\end{itemize}
\end{enumerate}
the map $F\mapsto {\cal I}_{\theta}(F;a)$ (case ($a$))
resp.\ $F\mapsto \bar {\cal I}_{\theta}(F)$ (case ($\bar {\cal I}$)) is strictly convex, hence there is a unique minimizer of $F_0$.
\end{Prop}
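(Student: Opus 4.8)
The plan is to reduce everything to a statement about the single real variable $t$ and to exploit the convexity already granted by Proposition~\ref{vlc}. Fix $\theta$ and write $F_t=(1-t)F_0+tF_1$ for the mixed central distribution, so that $\tilde F_t=\tau_\theta(F_t)$ is the corresponding observation-space law with $\lambda^k$-density $p_t:=f_{t,\theta}\,\dttx$, where $f_t=(1-t)f_0+tf_1$. Since convexity of $F\mapsto{\cal I}_\theta(F;a)$ (resp.\ $F\mapsto\bar{\cal I}_\theta(F)$) is already known, strict convexity---and hence, via assumption~(b) and Corollary~\ref{minatt}, uniqueness of the minimizer---will follow once I show that for $F_0\neq F_1$ in the convex class~(a) the scalar map $t\mapsto{\cal I}_\theta(F_t;a)$ (resp.\ $t\mapsto\bar{\cal I}_\theta(F_t)$) is \emph{not} affine on $[0,1]$.

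First I would record the representation \eqref{ass2}: where the Fisher information is finite, ${\cal I}_\theta(F;a)=\int (a^\tau\partial_\theta p_\theta)^2/p_\theta\,d\lambda^k$. The crucial algebraic observation is that mixing commutes with the transformation $\tau_\theta$, with forming the $\lambda^k$-density, and with differentiation in the parameter; consequently both $p_t$ and $g_t:=a^\tau\partial_\theta p_{t,\theta}$ are \emph{affine} in $t$. Thus $t\mapsto{\cal I}_\theta(F_t;a)=\int g_t^2/p_t\,d\lambda^k$ is an integral of the perspective function $(g,p)\mapsto g^2/p$, which is jointly convex and positively homogeneous of degree one on $\R\times(0,\infty)$. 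Homogeneity makes it affine exactly along rays through the origin, so $t\mapsto g_t^2/p_t$ is affine at a point $x$ iff $(g_0,p_0)(x)$ and $(g_1,p_1)(x)$ are proportional. Hence affineness of the integral forces $g_0\,p_1=g_1\,p_0$, i.e.\ equality of the direction-$a$ scores associated with $F_0$ and $F_1$, for $\lambda^k$-a.e.\ $x$ on $\{p_{0}>0\}$; by assumption~(c) this set is convex and carries the supports of $F_0$ and $F_1$, so nothing is lost off it.

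Next I would strip off the part of $\Lambda_\theta$ that does not depend on $F$. By \eqref{ass1} the summand $\partial_\theta\dttx/\dttx$ is transformation-only, so equality of scores reduces to $\big[(\nabla\log f_1-\nabla\log f_0)\!\circ\!\iota_\theta\big]^\tau(\partial_\theta\iota_\theta\,a)=0$ a.e. Writing $h:=\log(f_1/f_0)$ and passing to central coordinates $y=\iota_\theta(x)$, this reads $\nabla_y h(y)^\tau w_a(y)=0$ a.e., with $w_a(y)=[\partial_\theta\iota_\theta\,a](\tau_\theta(y))$. Now the identifiability hypothesis~(d) enters: in case~($a$) it guarantees $w_a\neq0$ off a $\lambda^k$-null set, and in case~($\bar{\cal I}$), choosing at each point the eigen-direction $a$ attaining $\bar{\cal I}_\theta$ and invoking (d)($\bar{\cal I}$) supplies, a.e., a nondegenerate direction. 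Integrating $\nabla h\cdot w_a=0$ along the flow of $w_a$ inside the convex positivity set of~(c) shows that $h$ is constant on the connected support; since $f_0$ and $f_1$ are probability densities the constant is $1$, whence $f_0=f_1$ and $F_0=F_1$, contradicting $F_0\neq F_1$.

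The main obstacle is exactly this last upgrade, from a single (or locally varying) directional identity $\nabla h\cdot w_a=0$ to the global conclusion $h\equiv\mathrm{const}$. In Huber's one-dimensional case $w_a$ is constant and connectedness of an interval makes the step immediate; in the present group setting one must integrate along the possibly curved integral curves of $w_a$ and argue that, restricted to the convex positivity set of~(c), these curves connect $\lambda^k$-almost every pair of points, while in the $\bar{\cal I}$-case one must additionally recover the full gradient from the eigen-directions furnished by (d)($\bar{\cal I}$). Controlling this flow and connectivity measure-theoretically, in harmony with the absolute-continuity regularity of $p_\theta$ provided by Theorem~\ref{dasTHM}, is where the real work lies; the convexity bookkeeping in the first two steps is routine once \eqref{ass1}--\eqref{ass2} are in hand.
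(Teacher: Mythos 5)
Your first half is correct and is a genuinely different route to the same intermediate conclusion. The paper, following Huber, differentiates $t\mapsto{\cal I}_{\theta}(F_t;a)$ twice under the integral sign (justified by a monotone convergence argument) and reads off strict positivity of $\frac{d^2}{dt^2}{\cal I}_{\theta}(F_t;a)$ unless the two scores coincide; you instead use joint convexity and positive homogeneity of the perspective map $(g,p)\mapsto g^2/p$ to conclude that affineness of the integral forces pointwise proportionality $g_0p_1=g_1p_0$ $[\lambda^k]$. Both arguments land on $a^{\tau}\partial_{\theta}\log \tilde f_0=a^{\tau}\partial_{\theta}\log \tilde f_1$ a.e., and yours avoids the interchange-of-limits bookkeeping, which is a small gain.

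The genuine gap is in your last step, and you have located it yourself: from the single scalar identity $\nabla h(y)^{\tau}w_a(y)=0$ a.e.\ you cannot conclude $h\equiv{\rm const}$ by ``integrating along the flow of $w_a$''. The integral curves of one vector field are one-dimensional and foliate the positivity set; constancy of $h$ along each leaf says nothing about variation transverse to the leaves, and no connectivity or measure-theoretic refinement repairs this. Concretely, in the multivariate location model with $a=e_1$ one has $w_a\equiv -e_1$, the identity reduces to $\partial_{x_1}h=0$, and any $h=h(x_{2:k})$ satisfies it with $F_0\neq F_1$. The paper exits differently at exactly this point: it invokes assumption (d) to pass from the directional identity to the \emph{full} gradient identity $\frac{\nabla f_0}{f_0}\circ\iota_{\theta}=\frac{\nabla f_1}{f_1}\circ\iota_{\theta}$, hence $\nabla\log\tilde f_0=\nabla\log\tilde f_1$ a.e., and then integrates coordinate by coordinate over the convex positivity set of (c) to get $\tilde f_0=c_i(x_{-i})\tilde f_1$ for each $i$, whence $\tilde f_0=c\tilde f_1$ and $c=1$ by normalization. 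So the missing idea is not a flow or connectivity argument but the recovery of all $k$ partial derivatives of $h$ from (d) (admittedly the paper is itself terse on why a fixed $a$ yields more than one scalar equation per point); without that upgrade, what you have established does not imply strict convexity.
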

\begin{Rem}\rm\small
Assumption (d) holds for the $k$ dimensional location scale model of Example~\ref{kls}:  For symmetric $a_s$ and the scale part $\theta_s$
it holds that
$a_s^{\tau}\partial_{\theta_s} \iota_{\theta}=a_s\theta_s^{-1}x$.
But for any $a_s$ with $|a_s|=1$, $\dim \ker a_s\theta_s^{-1}\leq k-1$, hence $\lambda^k(\ker a_s\theta_s^{-1})=0$.
\end{Rem}
\subsection{Existence of a Maximizer of \mbox{\protect\boldmath $\tr  {\cal I}^{-1}_{\theta}(F)$\unboldmath}}
\begin{Prop}\label{maxexist}
Let ${\cal F}$ be a weakly closed subset of ${\cal M}_{1}(\Bq^k)$. Assume that for all $0\not=a\in\R^p$,
$\min_{F\in{\cal F}}{\cal I}_{\theta}(F;a)>0$. Then the function $F\mapsto \tr  {\cal I}^{-1}_{\theta}(F)$
is weakly upper-semicontinuous on ${\cal F}$, and consequentially, attains its maximum along ${\cal F}$ in some $F_0\in{\cal F}$.
\end{Prop}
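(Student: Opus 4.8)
The plan is to derive the assertion from two ingredients: weak upper semicontinuity of $F\mapsto \tr{\cal I}_{\theta}^{-1}(F)$, and weak compactness of ${\cal F}$. The latter is immediate: by the remark following Corollary~\ref{minatt}, ${\cal M}_{1}(\Bq^k)$ is weakly compact (Prohorov), so the weakly closed set ${\cal F}$ is weakly compact, and an upper-semicontinuous function on a compact set attains its supremum. Everything thus reduces to upper semicontinuity. The subtlety is that Proposition~\ref{vlc} furnishes \emph{lower} semicontinuity of the quadratic forms $F\mapsto {\cal I}_{\theta}(F;a)=a^{\tau}{\cal I}_{\theta}(F)a$, whereas we need \emph{upper} semicontinuity of the trace of the inverse; the whole argument hinges on converting the former into the latter without an illegitimate interchange of a supremum with a limit.

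First I would record a joint lower semicontinuity: writing ${\cal I}_{\theta}(F;a)$ as in~\eqref{IdefkkF} as a supremum over $\psi\in{\cal D}_k$ of ratios whose numerators $\bigl(\int[\nabla\psi^{\tau}\tilde D_a+\psi\!\circ\!\iota_{\theta}\,\tilde V_a]\,dF\bigr)^2$ depend linearly on $a$ and (by the continuity established in the proof of Proposition~\ref{vlc}) continuously on $F$, each ratio is jointly continuous in $(a,F)$, so the supremum $(a,F)\mapsto {\cal I}_{\theta}(F;a)$ is jointly lower semicontinuous, and hence so is $(a,F)\mapsto a^{\tau}{\cal I}_{\theta}(F)a=|a|^2{\cal I}_{\theta}(F;a/|a|)$ (the case $k=1$ being identical via~\eqref{Idefk1}). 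Next I would use the hypothesis to extract a \emph{uniform} positive lower bound $c_0:=\inf\{{\cal I}_{\theta}(F;a)\,|\,F\in{\cal F},\,|a|=1\}>0$. Indeed, this infimum is the minimum of a jointly lower-semicontinuous function over the compact set $\{|a|=1\}\times{\cal F}$, hence attained at some $(a_*,F_*)$; and $c_0={\cal I}_{\theta}(F_*;a_*)\ge\min_{F\in{\cal F}}{\cal I}_{\theta}(F;a_*)>0$ by the hypothesis, the inner minimum existing by Corollary~\ref{minatt}. Consequently ${\cal I}_{\theta}(F)\succeq c_0\EM_p$ for every $F\in{\cal F}$, so the inverse exists with $\tr{\cal I}_{\theta}^{-1}(F)\le p/c_0<\infty$, and, crucially, $a\mapsto a^{\tau}{\cal I}_{\theta}(F)a\ge c_0|a|^2$ is coercive uniformly in $F$.

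The decisive step converts inversion into a constrained minimisation that respects lower semicontinuity. For $A\succ0$ one has $(A^{-1})_{ii}=1/\min\{a^{\tau}Aa\,|\,e_i^{\tau}a=1\}$, the constrained minimum being attained at $a\propto A^{-1}e_i$ (this is the natural reading of $\tr{\cal I}_{\theta}^{-1}$ even when some directions carry infinite information). Setting $h_i(F):=\min\{a^{\tau}{\cal I}_{\theta}(F)a\,|\,e_i^{\tau}a=1\}$, I would show each $h_i$ is weakly lower semicontinuous: given $F_n\to F_0$, pass to a subsequence realising $\liminf_n h_i(F_n)$ (an infinite value being trivial); a minimiser $a_n$ then satisfies $a_n^{\tau}{\cal I}_{\theta}(F_n)a_n=h_i(F_n)$, so $|a_n|^2\le h_i(F_n)/c_0$ stays bounded by uniform coercivity, a further subsequence converges to some $a_*$ with $e_i^{\tau}a_*=1$, and joint lower semicontinuity yields $h_i(F_0)\le a_*^{\tau}{\cal I}_{\theta}(F_0)a_*\le\liminf_n h_i(F_n)$. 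Since $h_i\ge c_0>0$, the reciprocal $F\mapsto(\,{\cal I}_{\theta}(F)^{-1})_{ii}=1/h_i(F)$ is weakly upper semicontinuous, whence $\tr{\cal I}_{\theta}^{-1}(F)=\sum_{i=1}^p 1/h_i(F)$ is weakly upper semicontinuous as a finite sum (using $\limsup\sum_i\le\sum_i\limsup$). Together with weak compactness of ${\cal F}$ this produces a maximiser $F_0\in{\cal F}$. I expect the main obstacle to be exactly the passage that would naively interchange the supremum over test functions (or over directions $a$) with the limit in $F$, which blocks any direct estimate $\limsup_n\tr{\cal I}_{\theta}^{-1}(F_n)\le\tr{\cal I}_{\theta}^{-1}(F_0)$; the constrained-minimum identity together with the uniform coercivity bound $c_0$ is precisely the device that circumvents it.
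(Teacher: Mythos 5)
Your proof is correct, but it replaces the paper's key linear--algebra identity by a different one, so the two arguments are worth contrasting. The paper writes $\tr {\cal I}^{-1}_{\theta}(F)=\sum_i 1/\bar{\cal I}_{\theta;i}(F)$, where $\bar{\cal I}_{\theta;i}(F)$ are the successive Courant--Fischer maxima of $a\mapsto{\cal I}_{\theta}(F;a)$ over $|a|=1$, $a\perp a_1(F),\dots,a_{i-1}(F)$ (i.e.\ the ordered eigenvalues), and asserts lower semicontinuity of each ``by the same argument'' as for $\bar{\cal I}_{\theta}$; this is slightly delicate for $i\geq 2$ because the orthogonality constraints depend on $F$, so the quantity is no longer a supremum of a \emph{fixed} family of l.s.c.\ functions, and one needs the min--max formulation plus a compactness argument on subspaces to make it airtight. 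You instead use $\bigl({\cal I}_{\theta}(F)^{-1}\bigr)_{ii}=1/\min\{a^{\tau}{\cal I}_{\theta}(F)a\mid e_i^{\tau}a=1\}$ and sum the diagonal entries; the constraint set $\{e_i^{\tau}a=1\}$ is now independent of $F$, which removes that delicacy, at the price of its non-compactness --- which you correctly pay for with the uniform coercivity bound $c_0=\inf\{{\cal I}_{\theta}(F;a)\mid F\in{\cal F},\,|a|=1\}>0$, itself extracted from joint lower semicontinuity in $(a,F)$ over a compact set together with Corollary~\ref{minatt} and the hypothesis. The joint-l.s.c.\ ingredient and the final compactness step (Prokhorov on $\Bq^k$, weakly closed hence compact, u.s.c.\ attains its maximum) coincide with the paper's. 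Net effect: your version is somewhat longer but arguably more self-contained at the one point where the paper's proof is terse; both deliver exactly the stated conclusion.
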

%Taking into account that by the asymptotic Cram\'er Rao bound, in the Asymptotic Convolution Theorem, and
%in the Asymptotic Minimax Theorem, see e.g.\ \citet[Thm.'s~3.2.3, 3.3.8]{Ri:94} or
%\citet[Thm.'s~8.8, 8.11]{VdW:98}, ${\cal I}^{-1}_{\theta}(F)$ is the variance of the respective
%optimal estimator $S^0_n$ we hence have shown the existence of an (asymptotic) saddlepoint $(S^0_n,F^0)$
%on any weakly closed subset ${\cal F}$ of ${\cal M}_{1}(\Bq^k)$ such that
%\begin{equation}
%\max_{F\in{\cal F}}\min_{S_n}\tr {\rm asVar}(S_n,F)=\tr {\rm asVar}(S_n^0,F^0)=\max_{F\in{\cal F}}\tr  {\cal I}^{-1}_{\theta}(F)=
%\min_{S_n}\max_{F\in{\cal F}}\tr {\rm asVar}(S_n,F)
%\end{equation}

%\setcounter{section}{-1}
%
%\section{Some preparations}

\appendix%
\small
\makeatletter
\gdef\theThm{\@Alph\c@section.\arabic{Thm}}
\makeatother
\section{Functional Analysis and Generalized Differentiability}\label{app}
%
%In this section we list some basic facts that are used throughout that paper.
%
\subsection{Dense Functions}
\begin{Prop}\label{propdense}
Let $\mu$ be a $\sigma$-finite measure on $\B^k$. Then the set ${\cal C}_c^{\infty}(\R^k,\R)$ is dense in any $L_p(\mu)$,
$p\in[1,\infty)$. In particular, there is a $c_0\in(0,\infty)$ s.t.\
for any $a<b \in \R$ and any $\delta>0$ there is a $\varphi=\varphi_{a,b,\delta}\in{\cal C}_c^{\infty}(\R,[0,1])$,
with $\varphi\equiv 0$ on $[a-\delta;b+\delta]^c$,
$\varphi\equiv 1$ on $[a+\delta;b-\delta]$
and $|\dot \varphi|\leq c_0 \!/\delta$.
\end{Prop}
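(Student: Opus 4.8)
The plan is to establish the two assertions in their natural logical order: first the explicit bump function, which is the concrete building block, and then density, which I reduce to approximating indicators of boxes by tensor products of such bumps. Both steps use only the linear and order structure of $L_p(\mu)$ together with dominated convergence; no mollification of $L_p(\mu)$-functions against Lebesgue measure is ever needed, which is essential since $\mu$ is a general $\sigma$-finite Borel measure and not $\lambda^k$.

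For the bump function I would fix once and for all a mollifier $\psi\in{\cal C}_c^{\infty}(\R,[0,\infty))$ with $\operatorname{supp}\psi\subset[-1,1]$ and $\int\psi\,d\lambda=1$, and set $\psi_{\delta}(t):=\delta^{-1}\psi(t/\delta)$, so that $\operatorname{supp}\psi_{\delta}\subset[-\delta,\delta]$. Define $\varphi=\varphi_{a,b,\delta}:=\Jc_{[a,b]}*\psi_{\delta}$, i.e.\ $\varphi(x)=\int_{a}^{b}\psi_{\delta}(x-y)\,dy$. Then $\varphi$ is smooth with values in $[0,1]$, it vanishes wherever the $\delta$-neighbourhood of $x$ misses $[a,b]$ (hence on $[a-\delta,b+\delta]^{c}$) and equals $1$ wherever that neighbourhood lies inside $[a,b]$ (hence on $[a+\delta,b-\delta]$). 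The substitution $u=x-y$ gives $\dot\varphi(x)=\psi_{\delta}(x-a)-\psi_{\delta}(x-b)$, whence $|\dot\varphi|\leq 2\,\|\psi\|_{\infty}/\delta$, so $c_{0}:=2\,\|\psi\|_{\infty}$ is admissible and, crucially, independent of $a,b,\delta$.

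For density I proceed by the standard reduction. Simple functions with finite-measure level sets are dense in $L_p(\mu)$, so by linearity it suffices to approximate $\Jc_{A}$ with $\mu(A)<\infty$; localizing to a window of finite measure by $\sigma$-finiteness and invoking the classical approximation of such sets by the generating algebra of half-open boxes, I may further reduce to approximating a single $\Jc_{Q}$ in $L_p(\mu)$. The one point that differs from the Lebesgue case is that the pointwise limit of a product bump is an open- (or closed-) box indicator, agreeing with $\Jc_{Q}$ only off the boundary $\partial Q$; I therefore pre-select the faces of $Q$ to avoid the (per coordinate at most countable) set of hyperplanes charged by $\mu$, so that $\mu(\partial Q)=0$. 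With this done, for $Q=\prod_{i}[a_i,b_i]$ the tensor product $\prod_{i=1}^{k}\varphi_{a_i,b_i,\delta}\in{\cal C}_c^{\infty}(\R^{k},\R)$ converges pointwise to $\Jc_{Q}$ off $\partial Q$ and is bounded by the indicator of a fixed enlarged box of finite measure, so dominated convergence yields $L_p(\mu)$-convergence; taking finite linear combinations keeps everything inside ${\cal C}_c^{\infty}(\R^{k},\R)$.

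The main obstacle to be careful about is precisely this measure-theoretic point: unlike for $\lambda^k$, the measure $\mu$ may place positive mass on hyperplanes and hence on box boundaries, so naive boundary smoothing need not converge. This is resolved not by shrinking slabs around a face (which fails when the face itself is charged), but by choosing the box faces to avoid the countably many charged levels in each coordinate from the outset; the remaining ingredients are regularity of finite Borel measures on the Polish space $\R^k$ and dominated convergence, both of which are routine.
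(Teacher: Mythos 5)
Your proof is correct (in the setting where the statement actually holds; see the caveat below) but takes a genuinely different and far more self-contained route than the paper. The paper disposes of density in one line by citing Lusin's theorem \citep[Thm.~3.14]{Ru:87}, which yields denseness of compactly supported continuous functions and leaves the passage to ${\cal C}_c^{\infty}$ implicit; you instead reduce to indicators of boxes via the approximation theorem for the generating algebra of half-open rectangles and then smooth these by tensor products of one-dimensional bumps. Your handling of the one genuinely non-Lebesgue point --- that $\mu$ may charge box boundaries, so the faces must be chosen from the co-countable set of uncharged levels in each coordinate \emph{before} invoking the algebra approximation, rather than repaired afterwards --- is exactly right, and it is the step a naive transcription of the Lebesgue-measure argument would miss. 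For the explicit bump, the paper integrates the classical kernel $\tilde f(t)=e^{-1/t}e^{-1/(1-t)}$ to build a monotone ramp, the universal constant coming from $\sup\tilde f\big/\int_0^1\tilde f$, whereas you convolve $\Jc_{[a,b]}$ with a rescaled mollifier and read off $\dot\varphi(x)=\psi_\delta(x-a)-\psi_\delta(x-b)$, hence $c_0=2\|\psi\|_\infty$; the two constructions are interchangeable, and yours has the minor advantage of producing the two-sided bump and its derivative bound in a single formula.

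One caveat, which applies equally to the paper's appeal to Lusin and to your localization step: bare $\sigma$-finiteness of $\mu$ on $\B^k$ does not suffice for the density claim. For $\mu=\sum_{q\in\Q^k}\delta_q$, which is $\sigma$-finite, every nonempty open set has infinite mass, so ${\cal C}_c^{\infty}(\R^k,\R)\cap L_p(\mu)=\{0\}$ while $L_p(\mu)\neq\{0\}$. Your reduction tacitly needs the exhausting finite-measure sets to be compatible with the box algebra, i.e.\ $\mu$ finite on compacts, and Rudin's theorem likewise concerns a regular Borel measure that is finite on compacts. Since every $\mu$ to which the proposition is applied in this paper is either $\lambda^k$ or a probability measure, nothing is lost in the applications, but the hypothesis should be read as ``locally finite'' rather than merely ``$\sigma$-finite''.
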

\begin{proof}{}
Denseness is a consequence of Lusin's Theorem, compare \citet[Thm.~3.14]{Ru:87}.
To achieve the universal bound $c_0$, we may  use
functions $\hat f(t)=\big(\int_0^t\tilde f(s)\,ds\big)\big/\big(\int_0^1\tilde
f(u)\,du\big)$, for $ f(t)=e^{-1/t}$, $\tilde f(t)=f(t)f(1-t)$.
%\end{equation}
\end{proof}
%\begin{Rem}\rm\small
%$c_0$ may be established as $c_0\doteq 2.6054$, which is actually a numerical approximation due to {\tt R 1.6.1} for
%$\max_t|\hat f'(t)|=e^{-4}\!/\int_0^1 \tilde f(s) \,ds$.
%\end{Rem}
%
%
\subsection{Absolute Continuity}
We recall the following characterization of absolute
continuity [notation a.c.] of functions $F:\R\to\R$ that can be found in \citet[Ch.~8]{Ru:87}.
\begin{Thm}\label{aceq}
For  $F\colon[a,b]\to\R$, $a\!<\!b\in\R$ the following statements 1.\ to
3.\ are equivalent
\begin{enumerate}
\item $F$ is a.c. on $[a,b]$
\item
\begin{enumerate}
\item $F'(x)$ exists $\lambda(dx)$ a.e. on $[a,b]$ and $F'\in L_1(\lambda_{|_{[a,b]}})$.
\item $F(x)-F(a)=\int_a^x F'(s) \lambda(ds)$ for all $x\in[a,b]$.
\end{enumerate}
\item There is some $u\in L_1(\lambda_{|_{(a,b)}})$ s.t.\ for $x\in[a,b]$, $F(x)$ has the representation
$$F(x)=F(a)+\int_a^x
u(s)\,\lambda(ds)$$
\end{enumerate}
\end{Thm}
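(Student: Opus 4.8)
The plan is to prove the three statements equivalent through the cycle $2 \Rightarrow 3 \Rightarrow 1 \Rightarrow 2$, which isolates the one genuinely substantial step. The implication $2 \Rightarrow 3$ is immediate: take $u = F'$, which lies in $L_1(\lambda_{|_{[a,b]}})$ by 2(a), and 2(b) is exactly the claimed representation. For $3 \Rightarrow 1$ I would invoke absolute continuity of the Lebesgue integral: given $\varepsilon > 0$ choose $\delta > 0$ with $\int_E |u|\,d\lambda < \varepsilon$ whenever $\lambda(E) < \delta$; then for any finite family of disjoint subintervals $(a_i,b_i) \subset [a,b]$ with $\sum_i (b_i - a_i) < \delta$ we get $\sum_i |F(b_i) - F(a_i)| = \sum_i |\int_{a_i}^{b_i} u\,d\lambda| \le \int_{\bigcup_i (a_i,b_i)} |u|\,d\lambda < \varepsilon$, which is precisely absolute continuity of $F$.

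The real content is $1 \Rightarrow 2$, which I would split into three stages. First, an a.c.\ function is of bounded variation: with the $\delta$ attached to $\varepsilon = 1$, partition $[a,b]$ into finitely many cells of length below $\delta$, so the variation on each is at most $1$ and the total variation is finite. Hence by Jordan decomposition $F$ is a difference of monotone functions, and Lebesgue's differentiation theorem for monotone functions yields that $F'$ exists $\lambda$-a.e.\ with $F' \in L_1(\lambda_{|_{[a,b]}})$, giving 2(a). Second, to obtain 2(b) I set $G(x) := \int_a^x F'\,d\lambda$; by the already-proven implication $3 \Rightarrow 1$ the function $G$ is a.c., and by the Lebesgue differentiation theorem $G' = F'$ holds $\lambda$-a.e. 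Thus $H := F - G$ is a.c.\ with $H' = 0$ a.e., and it remains only to show $H$ is constant.

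That final point --- an a.c.\ function with a.e.-vanishing derivative is constant --- is the crux and the expected main obstacle, since the Cantor function shows the assertion fails for merely continuous functions of bounded variation; absolute continuity must therefore be used in an essential, non-pointwise way. Here I would run a Vitali covering argument: fix $\varepsilon, \eta > 0$ and the $\delta$ from absolute continuity of $H$ for tolerance $\eta$; at each point $x$ of the full-measure set where $H'(x) = 0$, the intervals $[x, x+h]$ satisfying $|H(x+h) - H(x)| \le \varepsilon h$ form a Vitali cover, from which I extract a finite disjoint subfamily covering all but a set of measure below $\delta$. The variation of $H$ over the chosen intervals is at most $\varepsilon(b-a)$, while that over the remaining gaps is below $\eta$ by absolute continuity, so $|H(d) - H(c)| \le \varepsilon(b-a) + \eta$ for every $c < d$; letting $\varepsilon, \eta \to 0$ forces $H$ to be constant. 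Since $H(a) = F(a)$ this recovers $F(x) = F(a) + \int_a^x F'\,d\lambda$, completing $1 \Rightarrow 2$ and closing the cycle.
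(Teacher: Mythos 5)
Your proof is correct, but note that the paper does not prove this statement at all: Theorem~\ref{aceq} is recalled as a known characterization of absolute continuity and is simply attributed to \citet[Ch.~8]{Ru:87}, so there is no internal argument to compare against. What you have written is the classical real-analysis proof (the one found in Royden rather than in Rudin): the cycle $2\Rightarrow 3\Rightarrow 1\Rightarrow 2$ is the right economy, the reduction of $1\Rightarrow 2$ to ``a.c.\ with a.e.\ vanishing derivative implies constant'' is the standard decomposition via bounded variation, Jordan decomposition and the Lebesgue differentiation theorem, and your Vitali covering argument for the crux is complete and correct --- in particular you rightly flag the Cantor function as the reason absolute continuity must enter non-pointwise. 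For what it is worth, Rudin's own proof of the hard implication in the cited chapter takes a different device: for a nondecreasing a.c.\ function $F$ he shows that $E\mapsto\lambda(F(E))$ defines a measure absolutely continuous with respect to $\lambda$ and invokes Radon--Nikodym, whereas your route is entirely elementary covering-theoretic. Either is a legitimate way to supply the proof the paper omits; yours has the advantage of not presupposing Radon--Nikodym.
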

We also recall that a.c.\ functions, are closed under products
\citep[7.2~Prob.4]{Du:02}. In particular, integration by parts is
available. 
%
%\begin{Rem}\rm\small
In this paper, we call a function $F:\R\to\R$ a.c.\ if the equivalent statements 1.\ to 3.\ from Theorem~\ref{aceq}
are valid for each compact interval $[a,b]\subset \R$.
%\end{Rem}
%
%\begin{Lem}\label{acprod}
%Let $I=[a,b]$ and $f \in L_1(\lambda_{|_{[a,b]}})$,  $g \in {\cal C}^1(\R,\R)$, $g \not =0$ on $I$
%such that $fg$ is a.c.\ on $I$.  Then $f$ is a.c.\ on $I$.
%\end{Lem}
%
%
\subsection{Absolute Continuity in Higher Dimensions}
A little care has to be taken about null sets when transferring absolute continuity to higher dimensions.
The next definition is drawn from \citet{Sim:01}.
\begin{Def}\label{ACkdim}
A function $f:(\R^k,\B^k)\to(\R,\B)$ is called {\em absolutely continuous (in $k$ dimensions)\/}, if for every $i=1,\ldots,k$,
there is a set $N_i\in\B^{k-1}$ with $\lambda^{k-1}(N_i)=0$ s.t.\ for $y\in N_i^c$, the function
$f_{i,y}: (\R,\B)\to(\R,\B)$, $x\mapsto f_{i,y}(x)=f((x\!:\!y)_i)$ is a.c.\ in the usual sense.
\end{Def}
%
%\begin{Rem}\rm\small
%The following tempting formulation fails:\\
% $f$ is a.c.\ in $k$ dimensions if it is a.c.\ in the usual sense ``along the line $(x+\mu e_i)_{\mu\in\sR}$.
% for $\lambda^k$ a.e.\ $x$''---just take $k=2$ and the $\lambda^2$ null set $\{(\mu;\mu)_{\mu\in\sR}\}$. Then any
% line lies in the exception set and the  statement is void.
%\end{Rem}
%
In the proof of (ii) $\Rightarrow$ (i) in Theorem~\ref{dasTHM}, we need the following lemma:
\begin{Lem}
\label{Hajekl}
Let $f:\R^k\to\R$ a.c.\ in $k$ dimensions.
Then for each $i=1\ldots,k$
\begin{equation}
\lambda^k(\{f=0\},\{\partial_{x_i}f\not=0\})=0
\end{equation}
\end{Lem}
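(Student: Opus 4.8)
The plan is to reduce the $k$-dimensional statement to a one-dimensional fact about absolutely continuous functions, exploiting the definition of a.c.\ in $k$ dimensions (Definition~\ref{ACkdim}) which gives us control coordinate-by-coordinate. First I would fix a coordinate index $i$ and recall that, by Definition~\ref{ACkdim}, there is a null set $N_i\in\B^{k-1}$ with $\lambda^{k-1}(N_i)=0$ such that for every $y\in N_i^c$ the slice $g(x):=f_{i,y}(x)=f((x\!:\!y)_i)$ is a.c.\ on $\R$ in the usual one-dimensional sense. The target set $\{f=0\}\cap\{\partial_{x_i}f\not=0\}$ intersected with the slice at such a $y$ is exactly $\{x\,|\,g(x)=0,\,g'(x)\not=0\}$, so it suffices to show this one-dimensional set has $\lambda$-measure zero for each fixed admissible $y$, and then integrate out over the $y$-variable by Fubini.

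The core one-dimensional claim is that if $g:\R\to\R$ is a.c.\ and $g'$ exists $\lambda$-a.e.\ (which a.c.\ guarantees), then $\lambda(\{g=0\}\cap\{g'\not=0\})=0$. The standard way I would argue this is as follows. Let $Z:=\{x\,|\,g(x)=0\}$ be the zero set. At any point $x\in Z$ that is a density point of $Z$ (and by the Lebesgue density theorem $\lambda$-a.e.\ point of $Z$ is such a point), one can show the derivative, when it exists, must vanish: indeed if $x$ is a density point of $Z$ there is a sequence $x_n\to x$ with $x_n\in Z$, $x_n\not=x$, so the difference quotient $\frac{g(x_n)-g(x)}{x_n-x}=0$ forces $g'(x)=0$ whenever $g'(x)$ exists. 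Hence $\lambda$-a.e.\ point of $Z$ either is not a density point of $Z$ (a null set of exceptions) or has $g'=0$ (when the derivative exists), and the set where $g'$ fails to exist is itself $\lambda$-null by absolute continuity. Combining these, $\{x\in Z\,|\,g'(x)\text{ exists and }g'(x)\not=0\}$ is contained in the union of two null sets, so it is null.

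To assemble the result I would apply Fubini to the product structure. The set $A_i:=\{f=0\}\cap\{\partial_{x_i}f\not=0\}$ can be written, via the slicing map $(x\!:\!y)_i$, as a set whose $y$-section (for $y\in N_i^c$) is precisely $\{x\,|\,g(x)=0,\,g'(x)\not=0\}$, which I have just shown is $\lambda$-null; and for $y\in N_i$, the contribution is controlled since $N_i$ itself is $\lambda^{k-1}$-null. Therefore $\lambda^k(A_i)=\int_{\R^{k-1}}\lambda(\text{section at }y)\,\lambda^{k-1}(dy)=0$, giving the lemma for each $i$.

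The main obstacle I anticipate is purely measurability/bookkeeping rather than analytic depth: one must check that $\partial_{x_i}f$ is well-defined and measurable where it is used, that ``$g'(x)$ exists'' coincides $\lambda$-a.e.\ with ``$\partial_{x_i}f((x\!:\!y)_i)$ exists'' so that the sections of $A_i$ genuinely match the one-dimensional sets above, and that the slicing map $(x\!:\!y)_i$ interacts correctly with the product measure so Fubini applies. These are exactly the ``little care about null sets'' flagged before Definition~\ref{ACkdim}, and handling the coordinatewise derivative consistently with the a.c.\ slice derivative is where the care is concentrated; the genuinely analytic step—the density-point argument in one dimension—is short and standard.
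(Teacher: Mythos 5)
Your proof is correct, and its overall architecture coincides with the paper's: slice along coordinate $i$, use Tonelli/Fubini on the nonnegative indicator of $\{f=0\}\cap\{\partial_{x_i}f\not=0\}$, and reduce to showing that the one-dimensional sections $\{x\,|\,f_{i,y}(x)=0,\ f_{i,y}'(x)\not=0\}$ are $\lambda$-null for ($\lambda^{k-1}$-a.e.) $y$. Where you genuinely diverge is in the one-dimensional core. The paper argues directly from the definition of the derivative: if $f_{i,y}(x)=0$ and $f_{i,y}'(x)\not=0$, then $|f_{i,y}(x')|>|x'-x|\,|f_{i,y}'(x)|/2>0$ for all $x'$ in a small punctured neighbourhood of $x$, so every such $x$ is an isolated point of the zero set; a set of isolated points in $\R$ is countable, hence null. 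You instead invoke the Lebesgue density theorem: $\lambda$-a.e.\ point of the zero set $Z$ is a density point, every density point of $Z$ is approached by a sequence from $Z\setminus\{x\}$, and the vanishing difference quotients along that sequence force the derivative to be $0$ wherever it exists. Both arguments are valid. Note that neither step actually needs absolute continuity of the slice: the set in question already presupposes that the derivative exists, so your appeal to a.c.\ for a.e.-differentiability is harmless but superfluous (the paper likewise uses a.c.\ only to legitimize the coordinatewise slicing, not the pointwise argument). The paper's isolated-point argument is marginally more elementary and yields the stronger conclusion that each section is countable; your density-point argument is the standard ``the derivative vanishes a.e.\ on a level set'' fact and generalizes more readily, e.g.\ to level sets of functions of several variables.
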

\begin{proof}{}
Let $g(z)=\Jc_{\{f=0\}\cap\{\partial_{x_i}f\not=0\}}(z)$. Then $g\geq 0$ and Tonelli applies,
so the section-wise defined function $h_y(x):=g((x\!:\!y)_i)$ is measurable for each $y\in\B^{k-1}$ and
defining the possibly infinite integrals $H(y):=\int h_y(x) \, \lambda(dx)$ we get
$
\lambda^k(\{f=0\},\{\partial_{x_i}f\not=0\})=\int g \,d\lambda^k=\int H(y) \lambda^{k-1}(dy)
$. 
But for each $y$ the instances $x$ where $h_y(x)=0$, $h_y'(x)\not=0$ are separated by 
open one-dim.\ sets where $h_y\not=0$, as
$h_y(x)=0$, $h_y'(x)\not= 0$ implies that for some $0<|x'-x|<\ve$, $|f(x')|> |x'-x|\,|h_y'(x)|/2>0$.
Hence at most there can be a countable number of such $x$, and thus $H(y)=0$ for each $y$.
\end{proof}

\subsection{Weak Differentiability}
For proving absolute continuity in Theorem~\ref{dasTHM} we have worked with the notion of weak differentiability;
to this end we compile the following  definitions and propositions again drawn from \citet{Sim:01}, which we have specialized to
differentiation of order one. %We have just changed
%notation and specialized the assertion to hold for the whole $\R^k$ instead of an
%open subset $\Omega$ as integration domain.

\begin{Def}\label{weakderdef}
Let $u\in L_{1,{\ssr loc}}(\lambda^k)$, $1\leq i\leq k$. Then $v_{i} \in L_{1,{\ssr loc}}(\lambda^k)$ is
called {\em weak derivative of $u$\/} (with respect to $x_i$), denoted by $\tilde\partial_{x_{i}} u$, if
\begin{equation}
\int_{\sR^k} u \,\partial_{x_{i}} \varphi\,\, d\lambda^k=-\int_{\sR^k} v_{i}  \varphi\, d\lambda^k\qquad \forall \varphi \in {\cal C}_c^{\infty}(\R^k,\R)
\end{equation}
\end{Def}

\begin{Rem}\rm\small
\begin{ABC}
\item The weak derivative is unique, as for the difference $d=v_i-v_i'$ of two potential candidates,
we have $\int_{\sR^k} d  \varphi\, d\lambda^k=0$ for all $\varphi\in {\cal C}_c^{\infty}(\R^k,\R)$, so by Proposition~\ref{propdense},
$d$ must be $0$ $[\lambda^k]$.
\item Weak derivatives belonging to $L_2(\lambda^k)$ give rise to the space ${\cal W}_{2;1}={\cal W}_{2;1}(\lambda^k)$
of all functions $f:\R^k\to\R$ with
weak derivatives in $L_2(\lambda^k)$ of order one endowed with the norm
%\begin{equation}\label{normdef}
$
\|f\|^2_{{\cal W}_{2;1}}:=\sum_{i= l}^k \,\|\partial_{x_{i}} f\|^2_{L_2(\lambda^k)}
$ %\end{equation}
which is called {\em Sobolev space} of order $2$ and $1$ for which there is a rich theory.
\item The following two propositions---under the additional requirement that $\nabla f$ resp.\ $\tilde \nabla f$
be in $L_2^k(\lambda^k)$, however---may also be found in  \citet[Thm.'s~1 and~2]{Maz:85}.
\end{ABC}
\end{Rem}

%Our proves are adopted from an unpublished manuscript---\citet{Sim:01}. 
%We have just changed
%notation and specialized the assertion to hold for the whole $\R^k$ instead of an
%open subset $\Omega$ as integration domain.
%
%\begin{Def}\label{Sobolevdef}
%The space ${\cal W}_{2;1}={\cal W}_{2;1}(\lambda^k)$ of all functions $f:\R^k\to\R$ with
%weak derivatives in $L_2(\lambda^k)$ of order one endowed with the norm
%\begin{equation}\label{normdef}
%\|f\|^2_{{\cal W}_{2;1}}:=\sum_{i= l}^k \,\|\partial_{x_{i}} f\|^2_{L_2(\lambda^k)}
%\end{equation}
%is called {\em Sobolev space} of order $2$ and $1$.
%\end{Def}
%
\begin{Prop}\label{mazja}
Let $f \in L_{1,{\ssr loc}}(\lambda^k)$ with a weak gradient
$\tilde\nabla f$. Then there is some $\tilde f$, a.c.\ in $k$ dimensions
with usual gradient $\nabla \tilde f$,
such that---up to a $\lambda^k$-null set---$\tilde f=f$  and
 $\tilde \nabla f=\nabla \tilde f$.
\end{Prop}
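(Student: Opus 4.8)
The plan is to prove Proposition~\ref{mazja} by mollification, that is, by convolving $f$ with a smooth approximate identity and exploiting the defining property of the weak gradient to transfer derivatives onto the mollifier. First I would fix a standard nonnegative mollifier $\rho\in{\cal C}_c^{\infty}(\R^k,\R)$ with $\int\rho\,d\lambda^k=1$, supported in the unit ball, set $\rho_{\ve}(x)=\ve^{-k}\rho(x/\ve)$, and define $f_{\ve}:=f*\rho_{\ve}$. Since $f\in L_{1,{\ssr loc}}(\lambda^k)$, each $f_{\ve}$ is smooth and $f_{\ve}\to f$ in $L_{1,{\ssr loc}}$ as $\ve\downarrow 0$. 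The key computation is that differentiating under the convolution integral and then using Definition~\ref{weakderdef} (with the test function $x'\mapsto\rho_{\ve}(x-x')$, which is admissible since it lies in ${\cal C}_c^{\infty}$) yields
\begin{equation}
\partial_{x_i}f_{\ve}=(\tilde\partial_{x_i}f)*\rho_{\ve},
\end{equation}
so that the classical gradient of the mollification equals the mollification of the weak gradient. Consequently $\nabla f_{\ve}\to\tilde\nabla f$ in $L_{1,{\ssr loc}}$ as well.

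Next I would produce a candidate representative $\tilde f$ and verify it is absolutely continuous in $k$ dimensions in the sense of Definition~\ref{ACkdim}. Working one coordinate direction $i$ at a time, I would use the fundamental theorem of calculus for the smooth $f_{\ve}$ along almost every line parallel to $e_i$, namely $f_{\ve}((x\!:\!y)_i)-f_{\ve}((c\!:\!y)_i)=\int_c^x\partial_{x_i}f_{\ve}((s\!:\!y)_i)\,ds$, and pass to the limit. By Fubini/Tonelli, the $L_{1,{\ssr loc}}$-convergence of $f_{\ve}$ and of $\partial_{x_i}f_{\ve}$ forces, along a subsequence, convergence on $\lambda^{k-1}$-almost every section $y$; on such sections the limiting identity exhibits $x\mapsto\tilde f((x\!:\!y)_i)$ as an indefinite integral of an $L_{1,{\ssr loc}}$ function, which by the equivalence $3.\Rightarrow 1.$ of Theorem~\ref{aceq} is exactly absolute continuity on every compact interval, with the section-wise classical derivative agreeing a.e.\ with $\tilde\partial_{x_i}f$. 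Doing this for each $i$ and collecting the exceptional null sets $N_i$ gives a function $\tilde f$, a.c.\ in $k$ dimensions, equal to $f$ off a $\lambda^k$-null set and with $\nabla\tilde f=\tilde\nabla f$ a.e.

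The main obstacle I expect is the bookkeeping of null sets and the matching of the section-wise constant of integration so that a \emph{single} representative $\tilde f$ works simultaneously in all $k$ coordinate directions, rather than $k$ separately-chosen representatives. Concretely, the fundamental-theorem identity only pins down $\tilde f$ up to an additive function constant along each family of lines, and one must check these local descriptions are mutually consistent; this is handled by noting that the mollifications $f_{\ve}$ are genuine functions converging in $L_{1,{\ssr loc}}$, so one defines $\tilde f$ as their (subsequential, a.e.) limit once and for all and only afterwards reads off the section-wise a.c.\ structure, which automatically makes the directions compatible. A secondary technical point is justifying differentiation under the integral sign and the application of Tonelli to the nonnegative integrands controlling the convergence of sections; these are routine given local integrability, so the real content is the clean extraction of one good limit $\tilde f$ and the appeal to Theorem~\ref{aceq} direction by direction.
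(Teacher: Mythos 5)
Your mollification argument is correct and is the standard ``Sobolev implies ACL'' proof (as in Evans--Gariepy or Ziemer), but it is a genuinely different route from the paper's. The paper never mollifies: it localizes $f_m=f\chi_m$ with smooth cutoffs, defines the candidate a.c.\ representative \emph{directly} as the primitive $F_m((x\!:\!y)_i)=\int_{-\infty}^x\tilde\partial_{x_i}f_m((t\!:\!y)_i)\,\lambda(dt)$ of the weak derivative along almost every line, and then identifies $F_m$ with $f_m$ $[\lambda^k]$ by a double-Fubini computation against test functions (with a second cutoff $\psi$ to keep the inner integral integrable), finally gluing over $m$ and over the $k$ directions. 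Your route trades that identification step for the convolution identity $\partial_{x_i}f_{\ve}=(\tilde\partial_{x_i}f)*\rho_{\ve}$, which makes the fundamental-theorem identity free on each section and shortens the argument, at the cost of a subsequence/diagonal extraction over directions and an exhaustion by compacts. One refinement you should make explicit at your own flagged point: defining $\tilde f$ merely as the subsequential \emph{a.e.} limit is not quite enough, since on a good line the a.e.\ limit could a priori fail to exist on a one-dimensional null subset of that line, and a function differing from an a.c.\ function on such a set need not be a.c. The fix is that on $\lambda^{k-1}$-almost every section the convergence is in fact locally uniform in $x$ --- average the identity $f_{\ve_n}((x\!:\!y)_i)=f_{\ve_n}((c\!:\!y)_i)+\int_c^x\partial_{x_i}f_{\ve_n}$ over $c$ in a compact interval and use the $L_1$-convergence of both $f_{\ve_n}$ and $\partial_{x_i}f_{\ve_n}$ on the section --- so the pointwise limit exists at \emph{every} point of almost every line and the single representative is automatically consistent across all $k$ directions. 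With that observation supplied, your proof is complete.
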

\begin{proof}{}
%for the case $f\in {\cal W}_{2;1}$
%\citet{Maz:85}\index{Maz'ja}, Theorem~1; a proof of the general case is contained in \citet{Sim:01} and available upon request.
Let again $\Omega_m=[-m,m]$ and consider
$\chi_m\in{\cal D}_k$ with $0\leq \chi_m\leq 1$, $\varphi_m\equiv 0$ on $\Omega_{m+1}^c$, $\chi_m\equiv 1$ on $\Omega_{m}$,
and let $f_m=f\chi_m$. Then $f_m\in L_1(\lambda^k)$ and we have for any $\phi \in {\cal D}_k$
$$
-\int f_m \partial_{x_i}\phi\, d\lambda^k= -\int \chi_m \phi \,\tilde \partial_{x_i} f\, d\lambda^k=\int f \partial_{x_i}(\chi_m \phi)\, d\lambda^k=
\int f (\phi \,\partial_{x_i}\chi_m + \chi_m \partial_{x_i}\phi)\, d\lambda^k
$$
so that $f_m$ is weakly differentiable and
$
\tilde \partial_{x_i} f_m= \chi_m  \tilde \partial_{x_i} f+f \partial_{x_i} \chi_m  \in L_1(\lambda^k)
$.
By Fubini we obtain some $N_{m,i}\in\B^{k-1}$ with $\lambda^{k-1}(N_{m,i})=0$ such that $v_m:\R^{k-1}\to\R$ defined as
$$
v_m(y):=%\left\{
%\begin{array}
%{ll}
\int_{\sR} |\tilde \partial_{x_i} f_m((t\!:\!y)_i)|\,\lambda(dt)\;\; %&
\mbox{ for }y\in N_{m,i}^c\;\;%\\
\mbox{ and }\;\;0%&
\mbox{ else}
%\end{array}
%\right.
$$
is finite for $y\in \R^{k-1}$, lies in $L_1(\lambda^{k-1})$ and $\int_{\sR^{k-1}} v_m\,d\lambda^{k-1}= \|\tilde \partial_{x_i}f_m\|_{L_1(\lambda^k)}.$
Thus we may define to $x\in \R$
\begin{equation}
F_m((x\!:\!y)_i):=
%\left\{
%\begin{array}
%{ll}
\int^{x}_{-\infty} \tilde \partial_{x_i} f_m((t\!:\!y)_i)\,\lambda(dt)\;\;% &
\mbox{ for }y\in N_{m,i}^c\;\;%\\
\mbox{ and }\;\;0%&
\mbox{ else}
%\end{array}
%\right.
\end{equation}
Apparently, $F_m\in L_{1,{\rm\SSs loc}}(\lambda^k)$ and for $y\in N_{m,i}^c$,  $x\mapsto F_m((x\!:\!y)_i)$ is a.c.
Let $\phi \in{\cal D}_k$; then Fubini yields
\begin{eqnarray*}
I&:=&\int_{\sR^k} \phi F_m d\lambda^k=%
%&=&\int_{N_{m,i}^c}\int_{\sR} \phi((x\!:\!y)_i) \int_{\sR} \Jc_{\{t\leq x\}} \tilde \partial_{x_i}f_m((t\!:\!y)_i) \,\lambda(dt)\,\lambda(dx)\,\lambda^{k-1}(dy)=\\
%&=&
\int_{N_{m,i}^c}\int_{\sR} \tilde \partial_{x_i}f_m((t\!:\!y)_i)\int_{\sR} \Jc_{\{x\geq t\}}  \phi((x\!:\!y)_i) \,\lambda(dx)\,\lambda(dt)\,\lambda^{k-1}(dy)%=\\
\end{eqnarray*}
So far we do not know if the inner integral on the RHS is in $L_1(\lambda^k)$, so another localization argument is needed. To this end let
$\psi \in{\cal D}_k$, $\psi\equiv 1$ on $\Omega_{m+1}$, $\psi\equiv 0$ on $\Omega_{m+2}^c$; then as $f_m, \tilde\partial_{x_i} f_m\equiv 0$ on $\Omega_{m+1}^c$, we have
$f_m\equiv f_m\psi$, $\tilde\partial_{x_i} f_m\equiv \psi \tilde\partial_{x_i} f_m$, and $f_m \partial_{x_i} \psi \equiv 0$. For with $u=(t\!:\!y)_i$ define the function
$$\varphi(u):=\psi(u)\int  \Jc_{\{x\geq t\}}  \phi((x\!:\!y)_i) \,\lambda(dx),$$
which clearly lies in ${\cal D}_k$. Fubini and the definition of weak differentiability entail
\begin{eqnarray*}
I%&=&\int_{N_{m,i}^c}\int_{\sR} \tilde \partial_{x_i}f_m((t\!:\!y)_i)\psi((t\!:\!y)_i)\int_{\sR} \Jc_{\{x\geq t\}}  \phi((x\!:\!y)_i) \,\lambda(dx)\,\lambda(dt)\,\lambda^{k-1}(dy)=\\
&=&\int_{N_{m,i}^c}\int_{\sR} \tilde \partial_{x_i}f_m((t\!:\!y)_i)\varphi((t\!:\!y)_i)\,\lambda(dt)\,\lambda^{k-1}(dy)=%=\\
%&=&
\int_{\sR^k}\varphi \tilde \partial_{x_i}f_m\,d\lambda^{k}=-\int_{\sR^k} f_m  \partial_{x_i}\varphi\,d\lambda^{k}
\end{eqnarray*}
But, 
$\partial_{x_i}\varphi(u)=\partial_{x_i}\psi(u) \int_{\sR} \Jc_{\{x\geq t\}}(x)  \phi((x\!:\!y)_i) \,\lambda(dx) - \psi(u)\phi(u) $,
as $f_m \partial_{x_i} \psi \equiv 0$, $f_m\equiv f_m\psi$ we get
\begin{equation}
I=\int_{\sR^k} \phi F_m d\lambda^k=\int_{\sR^k} f_m \psi \phi \,d\lambda^{k}=\int_{\sR^k} f_m \phi \,d\lambda^{k},
\end{equation}
Because $\phi$ was arbitrary in ${\cal D}_k$, $F_m=f_m\;\;[\lambda^k]$, and by letting $m\to\infty$ we may extend this to $\R^k$.
Fubini then provides a $\lambda^{k-1}$-null set $S_i$ s.t.\ for $y \in S_i^c$ the projection set
$S_i^{(y)}:=\{x \in \R\,:\; (x\!:\!y)_i\in S_i\}$ has $\lambda$-measure $0$.
Let $N_i:=\bigcup_m N_{m,i}$; then $\lambda^{k-1}(N_i)=0$, and for $y\in N_i^c$ the functions $x\mapsto F_m((x\!:\!y)_i)$ are a.c.,
hence continuous in particular. For $y\in (N_i\cup S_i)^c$, $x\in (S_i^{(y)})^c$ and $k\in\N$, even
$
F_m((x\!:\!y)_i)=F_{m+k}((x\!:\!y)_i)
$,
and hence by continuity, for all $y\in (N_i\cup S_i)^c$, $F_m((x\!:\!y)_i)=F_{m+1}((x\!:\!y)_i)$ for all $x$.
Hence, writing again $u=(t\!:\!y)_i$, this gives a unique function $\tilde f_i\in L_{1,{\rm\SSs loc}}(\lambda^k)$
defined as
\begin{equation}
\tilde f_i(u):=\left\{
\begin{array}
{ll}
\lim_m F_m(u)&\mbox{for }u\in\R^k,\quad y\in (N_i\cup S_i)^c\\
0&\mbox{else}
\end{array}
\right.
\end{equation}
s.t.\ that $\tilde f_i$ is a.c.\ w.r.t.\ $x_i$ in the sense that there is $\lambda^{k-1}$-null set $\tilde N_i$ s.t.\ for
$y\in \tilde N_i^c$ the function $x\mapsto \tilde f_i((x\!:\!y)_i)$ is a.c. By construction, 
$\lambda^k(\{\tilde f_i\not=f\}\cup\{\tilde \partial f\not=\partial \tilde f_i\})=0$. 
Applying this argument for each $i=1,\ldots,k$, we see that there is a function $\tilde f$ which is a.c.\ in $k$ dimensions,
s.t.\ $\lambda^k(\{\tilde f\not=f\}\cup\{\tilde \nabla f\not=\nabla \tilde f\})=0$.
\end{proof}
\begin{Prop}\label{mazja2}
Let $f\in L_{1,{\ssr loc}}(\lambda^k)$ be a.c.\ in $k$ dimensions.
If its classical  partial derivatives $\partial_{x_i}f $,
 are extended by $0$ on those lines where absolute continuity fails,  and the so extended
  gradient belongs to $L^k_{1,{\ssr loc}}(\lambda^k)$, then there is a weak  gradient of
  $f$ and the extended gradient can be taken as a version of  the weak gradient.
\end{Prop}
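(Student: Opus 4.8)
The plan is to verify the defining identity of Definition~\ref{weakderdef} one coordinate at a time, reducing everything to line-wise integration by parts via Fubini; this proposition is essentially the converse of Proposition~\ref{mazja}. Fix an index $i\in\{1,\ldots,k\}$ and denote by $g_i$ the $i$-th component of the extended gradient, so that $g_i((x\!:\!y)_i)=\partial_{x_i}f((x\!:\!y)_i)$ for $\lambda^{k-1}$-almost every $y$---namely for $y\in N_i^c$, where $N_i$ is the exceptional $\lambda^{k-1}$-null set supplied by Definition~\ref{ACkdim}---and $g_i=0$ on the remaining lines. By hypothesis $g_i\in L_{1,{\ssr loc}}(\lambda^k)$. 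Let $\varphi\in{\cal C}_c^{\infty}(\R^k,\R)$ be an arbitrary test function.

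First I would invoke Fubini to split the target integral along the $x_i$-direction: since $f\in L_{1,{\ssr loc}}(\lambda^k)$ while $\partial_{x_i}\varphi$ is bounded with compact support, the product $f\,\partial_{x_i}\varphi$ lies in $L_1(\lambda^k)$, so
$$\int_{\sR^k} f\,\partial_{x_i}\varphi\,d\lambda^k=\int_{\sR^{k-1}}\Big(\int_{\sR} f((x\!:\!y)_i)\,\partial_{x_i}\varphi((x\!:\!y)_i)\,\lambda(dx)\Big)\,\lambda^{k-1}(dy).$$
For each good line $y\in N_i^c$ the map $x\mapsto f((x\!:\!y)_i)$ is a.c.\ in the usual sense, whereas $x\mapsto\varphi((x\!:\!y)_i)$ is smooth with compact support in $x$; products of a.c.\ functions are a.c.\ and admit integration by parts, as recorded after Theorem~\ref{aceq}. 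Since $\varphi$ vanishes off a compact set the boundary terms drop out, leaving
$$\int_{\sR} f((x\!:\!y)_i)\,\partial_{x_i}\varphi((x\!:\!y)_i)\,\lambda(dx)=-\int_{\sR}\partial_{x_i}f((x\!:\!y)_i)\,\varphi((x\!:\!y)_i)\,\lambda(dx).$$

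Finally I would reassemble. On $N_i^c$ the integrand on the right equals $g_i((x\!:\!y)_i)\,\varphi((x\!:\!y)_i)$, and because $\lambda^{k-1}(N_i)=0$ the discarded lines contribute nothing to the outer $y$-integration; using $g_i\,\varphi\in L_1(\lambda^k)$ (from $g_i\in L_{1,{\ssr loc}}(\lambda^k)$ and the compact support of $\varphi$), a second application of Fubini collapses the iterated integral back to $-\int_{\sR^k} g_i\,\varphi\,d\lambda^k$. Hence $\int f\,\partial_{x_i}\varphi\,d\lambda^k=-\int g_i\,\varphi\,d\lambda^k$ for every $\varphi$, which is exactly the relation identifying $g_i=\tilde\partial_{x_i}f$; ranging over $i=1,\ldots,k$ assembles the weak gradient $\tilde\nabla f$ and shows the extended gradient is a version of it. The only delicate points are the two integrability checks licensing the Fubini steps and the vanishing of the line-wise boundary terms, but both follow immediately from $f,g_i\in L_{1,{\ssr loc}}(\lambda^k)$ together with the compact support of $\varphi$, so I do not expect a substantial obstacle.
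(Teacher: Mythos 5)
Your proposal is correct and follows essentially the same route as the paper: line-wise integration by parts on the a.c.\ lines $y\in N_i^c$, with the two Fubini applications justified by $f\,\partial_{x_i}\varphi$ and $g_i\,\varphi$ lying in $L_1(\lambda^k)$ thanks to local integrability and the compact support of the test function. The only cosmetic difference is that you work directly with $\varphi\in{\cal C}_c^{\infty}(\R^k,\R)$ as in Definition~\ref{weakderdef}, which is exactly what is needed.
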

\begin{proof}{}
As $f$ is a.c.\ in $k$ dimensions there exist $N_i\in \B^{k-1}$ such that for $y \in N_i^c$ the functions
$x\mapsto f((x\!:\!y)_i)$ are a.c. Let $\phi\in {\cal D}_k$ and $y \in N_i^c$. Then $x\mapsto \phi((x\!:\!y)_i)\in{\cal D}_1$ and thus
by integration by parts, for $y \in N_i^c$, we have
\begin{equation}
\int_{\sR} f((x\!:\!y)_i)\, \partial_{x_i}\phi((x\!:\!y)_i)\,\lambda(dx)=-
\int_{\sR} \phi((x\!:\!y)_i) \, \partial_{x_i}f((x\!:\!y)_i)\,\lambda(dx) \label{hic}
\end{equation}
Obviously, extending $f \, \partial_{x_i}\phi$, $\phi\,\partial_{x_i}f $ by $0$ on $y\in N_i$, these two functions belong to $L_1(\lambda^k)$.
Fubini thus yields a set $\tilde N_i\in\B^{k-1}$, $\lambda^{k-1}(\tilde N_i)=0$, s.t.\ for $y\in \tilde N_i^c$,
$x\mapsto [f \, \partial_{x_i}\phi]((x\!:\!y)_i)$, $x\mapsto [\phi\,\partial_{x_i}f]((x\!:\!y)_i)$ belong to $L_1(\lambda)$.
Hence by Fubini
%\begin{eqnarray*}
$\int_{\sR^k} f \, \partial_{x_i}\phi\, d\lambda^k%&=&\int_{(N_i\cup\tilde N_i)^c} [ \int_{\sR} f((x\!:\!y)_i)\, \partial_{x_i}\phi((x\!:\!y)_i)\,\lambda(dx)]\,\lambda^{k-1}(dy)=\\
%&\stackrel{{\ssr \eqref{hic}}}{=}&-\int_{(N_i\cup\tilde N_i)^c} [ \int_{\sR} \phi((x\!:\!y)_i)\, \partial_{x_i}f((x\!:\!y)_i)\,\lambda(dx)]\,\lambda^{k-1}(dy)=%\\
%&=&
=-\int_{\sR^k} \phi \, \partial_{x_i}f\, d\lambda^k$. 
%\end{eqnarray*}
As $\partial_{x_i}f\in L_{1,{\rm\SSs loc}}(\lambda^k)$ by definition of absolute continuity in $k$ dimensions,
this possibly extended $\partial_{x_i}f$ is a weak derivative of $f$.
%for the case that the gradient already belongs to $L_2(\lambda^k)$, which entails that $f\in L_{1,{\ssr loc}}(\lambda^k)$,
%\citet{Maz:85}\index{Maz'ja}, Theorem~2; a proof of the general case is contained in \citet{Sim:01} and available upon request.
\end{proof}
\begin{Rem}\rm\small
Having this ``almost'' coinciding of weak differentiability and absolute continuity in $k$ dimensions in mind,
we drop the notational difference of weak and classical derivatives.
\end{Rem}
\section{Proofs}\label{proofs}
\subsection{Preparations}
Before proving Theorem~\ref{dasTHM}, some preparations are
needed.

%\begin{Rem}\rm\small
We want to parallel the proof given in \cite{Hu:81} credited to T.~Liggett: The idea is to define for given $a\in\R^p$
linear functionals $\tilde T_{a;i}$ on the dense subset ${\cal C}^{\infty}(\Rq^k,\R)$ of $L_2(P_{\theta})$ as
\begin{equation}\label{Tidef}
\tilde T_{a;i}:{\cal C}^{\infty}(\Rq^k,\R)\to\R,\qquad \tilde T_{a;i}(\varphi):=\int D_{a;i}\,\pxI{i}\varphi \,dP_{\theta}.
\end{equation}

\begin{Rem}\label{welldef?}\rm\small
As also true for the one-dimensional location model treated in \citet{Hu:81} and
in the one-dimensional scale model in \citet{Ru:Ri:10}, 
it is not clear  \`a priori whether this is a sound
definition, i.e., whether $\tilde T_{a;i}$ respect equivalence classes of functions in $L_2(P_{\theta})$:\\
%First of all we only know that ${\cal I}(F;a)$ being finite, $D_{a;i}\,\pxI{i}\varphi+V_a\varphi\in L_1(P_{\theta})$. But
% as $V_a\in L_2(P_{\theta})$,  integrability of $D_{a;i}\,\pxI{i}\varphi$ follows.\\
As by (Dk) resp.\ (D1), $D_{a;i}$ is continuously differentiable, 
it is bounded on compacts, hence $D_{a;i}\,\pxI{i}\varphi \in  L_1(P_{\theta})$
for any $\varphi \in{\cal C}^{\infty}(\Rq^k,\R)$;  but
even then, it is still not clear whether \eqref{Tidef} makes a definition:
Take $x^{(0)}\in\R^k$ so that $x^{(0)}_{i}D_{a;i}(x^{(0)})\not=0$ and
$P_{\theta}$ Dirac measure for $\{x^{(0)}\}$.\\
Then obviously,
$\varphi_0(x)=(x^{(0)})^{\tau}(x-x^{(0)})=0$ $P_{\theta}(dx)$-a.e., but it also holds,
$\pxI{i}\varphi_0(x)D_{a;i}(x)=
%V_a(x^{(0)})\varphi(x^{(0)})+\pxI{i}\varphi(x^{(0)})D_{a;i}(x^{(0)})=
x^{(0)}_{i}D_{a;i}(x^{(0)})$  $P_{\theta}(dx)$-a.e., with the consequence that, although
$\varphi_0=0\;[P_{\theta}]$, $\tilde T_{a;i}(\varphi_0)\not=\tilde T_{a;i} (0)=0$.
 %, that is
% Definition~\ref{Itdef} does not necessarily respect equivalence classes of functions in $L_2(P_{\theta})$.
Of course,   $\varphi_0$ must be modified away from $x^{(0)}$ to some $\tilde\varphi$ s.t.\ 
$\tilde\varphi$ belongs to  ${\cal C}^{\infty}(\Rq^k,\R)$.
Luckily enough, this case cannot occur under condition (i) of Theorem~\ref{dasTHM},
as then 
\begin{equation} \label{welldef}
\varphi=0\;[P_{\theta}]\qquad \Longrightarrow \qquad
\tilde T_{a;i}(\varphi)=\int \pxI{i}\varphi\,D_{a;i} \,dP_{\theta}=0
\end{equation}
which may be proved just along the lines of the first 
paragraph of \citet[Proof to Thm.~2.2]{Ru:Ri:10}.
Due to linearity of differentiation, evaluated member-wise in an
$P_{\theta}$-equivalence class, this shows that $\tilde T$
respects $P_{\theta}$-equivalence classes.
\end{Rem}

Next we need a lemma showing denseness of  certain sets in suitable
$L_2$'s. To do so we define for $i,j=1,\ldots ,k$% and $a=a_j$
\begin{equation}
{\cal D}_{D;i,j}:=\{D_{a;j}\,\partial_{x_{i}}\phi\,|\,\phi\in{\cal D}_{k},\;a\in\R^p\}
\end{equation}
and recalling that $K_j:=\{e_j^{\tau}D=0\}$ we introduce the decompositions corresponding to \eqref{p0def}
\begin{equation}
P_{\theta}:=P^{(j)}_{\theta}+\bar P^{(j)}_{\theta},\qquad \bar P^{(j)}_{\theta}(\cdot):=P_{\theta}(\cdot\cap K_j).\label{p0def}
\end{equation}

\begin{Lem}\label{densel}
${\cal D}_{D;i,j}$ is dense in $L_{2}(Q)$ for any $\sigma$-finite measure on $\B^k\cap K_{j}^c$. In particular it is
measure-determining for $\B^k\cap K_{j}^c$.
\end{Lem}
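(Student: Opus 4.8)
The plan is to prove density by showing that the orthogonal complement of ${\cal D}_{D;i,j}$ in $L_2(Q)$ is trivial, keeping the indices $i,j\in\{1,\dots,k\}$ fixed while $\phi\in{\cal D}_k={\cal C}^{\infty}(\Rq^k,\R)$ and $a\in\R^p$ vary. It suffices to exhibit a sub-collection of ${\cal D}_{D;i,j}$ made of honest $L_2(Q)$-functions that is already dense; the natural choice is $\{D_{j,l}\,\psi\,|\,\psi\in{\cal C}_c^{\infty}(\R^k,\R),\ l=1,\dots,p\}$, where $D_{j,l}$ is the $(j,l)$-entry of the $k\times p$ matrix $D$, i.e.\ $D_{a;j}$ for $a$ the $l$-th canonical basis vector of $\R^p$. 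By (Ck) (resp.\ (C1)) each $D_{j,l}$ is continuous, so these functions are bounded with compact support and lie in $L_2(Q)$ on every compact where $Q$ is finite.

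The first substantive step is to check that this sub-collection really sits inside ${\cal D}_{D;i,j}$, i.e.\ that every $\psi\in{\cal C}_c^{\infty}(\R^k,\R)$ is of the form $\partial_{x_i}\phi$ for some $\phi\in{\cal C}^{\infty}(\Rq^k,\R)$. I would take the $x_i$-antiderivative $\phi(x):=\int_{-\infty}^{x_i}\psi((t\!:\!x_{-i})_i)\,\lambda(dt)$, which is smooth on $\R^k$ with $\partial_{x_i}\phi=\psi$ and is eventually constant in each coordinate direction: it vanishes for $x_i$ below the support of $\psi$ and once any coordinate leaves that support, and it equals $\int_{\sR}\psi\,\lambda(dt)$ for $x_i$ above it. Being locally constant (or zero) near each face at infinity, $\phi$ lifts through $\kappa$ to a function smooth up to the boundary of $[0,1]^k$, hence $\phi\in{\cal C}^{\infty}(\Rq^k,\R)$ in the sense of Definition~\ref{compactifk}. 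I expect this compactification-membership check to be the main obstacle, since it is the only place where the fine structure of $\Rq^k$ enters and one must verify the one-sided boundary derivatives, not merely the existence of limits.

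With the inclusion in hand, I would fix $g\in L_2(Q)$ orthogonal to the sub-collection and deduce $g=0$ $[Q]$. For each fixed $l$ the orthogonality reads $\int (g\,D_{j,l})\,\psi\,dQ=0$ for all $\psi\in{\cal C}_c^{\infty}(\R^k,\R)$. Localizing to a compact box $B$, the function $g\,D_{j,l}\,\Jc_{B}$ lies in $L_2(Q)$ because $D_{j,l}$ is bounded on $B$; testing only against $\psi$ supported in the interior of $B$ and invoking the denseness of ${\cal C}_c^{\infty}$ in $L_2$ of a $\sigma$-finite measure from Proposition~\ref{propdense}, I conclude $g\,D_{j,l}=0$ $[Q]$ on the interior of $B$, and then on all of $\R^k$ by exhausting with countably many boxes. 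Hence $g=0$ $[Q]$ on $\{D_{j,l}\not=0\}$.

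Finally I would take the union over $l$. Since $K_j=\{e_j^{\tau}D=0\}=\bigcap_{l=1}^p\{D_{j,l}=0\}$, we have $K_j^c=\bigcup_{l=1}^p\{D_{j,l}\not=0\}$, so $g=0$ $[Q]$ on $K_j^c$; as $Q$ is carried by $K_j^c$ this gives $g=0$ $[Q]$ and proves density. The measure-determining assertion then follows in the standard way: if two measures on $\B^k\cap K_j^c$ agree on ${\cal D}_{D;i,j}$, applying the density conclusion to the difference of their densities with respect to their sum forces them to coincide on $K_j^c$.
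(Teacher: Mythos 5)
Your proof is correct, but it takes a genuinely different route from the paper's. The paper argues constructively: it reduces to approximating indicators $\Jc_I$ of intervals $I$ compactly contained in a region where $|D_{a;j}|>0$, picks $\varphi_0\in{\cal C}_c^\infty(\R^k,\R)$ close to $\Jc_I$ in $L_2(Q)$, and defines $\psi_0$ as the $x_i$-antiderivative of $\varphi_0/D_{a;j}$, so that $D_{a;j}\partial_{x_i}\psi_0=\varphi_0$ exactly; the non-vanishing of $D_{a;j}$ is exploited by dividing \emph{inside} the construction. You instead keep the factor $D_{j,l}$ attached, show only the inclusion $\{D_{j,l}\psi\,|\,\psi\in{\cal C}_c^\infty(\R^k,\R)\}\subset{\cal D}_{D;i,j}$ via the plain antiderivative of $\psi$, and then kill the orthogonal complement, using the non-vanishing of the $D_{j,l}$ on $K_j^c$ only at the level of the a.e.\ implication $gD_{j,l}=0\Rightarrow g=0$ on $\{D_{j,l}\not=0\}$. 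Your inclusion step is the more robust of the two: it never divides by $D$ (note $D$ is only ${\cal C}^1$ by (Ck), so the paper's $\varphi_0/D_{a;j}$ is not obviously as smooth as claimed), and your explicit covering $K_j^c=\bigcup_l\{D_{j,l}\not=0\}$ repairs a gloss in the paper, which fixes a single $a$ although $\{D_{a;j}\not=0\}$ may be a proper subset of $K_j^c$. The price is that the orthocomplement argument only yields density of the closed \emph{linear span} of ${\cal D}_{D;i,j}$ (the set is not linear across different $a$); since the lemma is only ever invoked for its measure-determining consequence, this suffices, and the paper's patching of different $a$'s over different intervals has the same feature. Two small touch-ups: the limit of your $\phi$ as $x_i\to+\infty$ is $\int\psi\,d\lambda$ viewed as a function of $x_{-i}$, not a constant---membership in ${\cal C}^{\infty}(\Rq^k,\R)$ still holds because every partial derivative of $\phi$ has compact support, so the blow-up of $\kappa'$ at $\partial[0,1]^k$ is annihilated; and both your argument and the paper's tacitly use that $Q$ is finite on compacts when placing $D_{j,l}\psi$, resp.\ interval indicators, into $L_2(Q)$.
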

\begin{proof}{}
Approximating $f \in L_{2}(Q)$ in $L_{2}(Q)$ by $f_n:=f\Jc_{\Omega_n}$ with
$\Omega_n=[-n,n]^k$,
we may restrict ourselves to $\Omega_N$ for $N$ sufficiently large.  Thus we
have to show that for each interval $J\subset K_j^c$ and each $\ve>0$, there is a $\hat \varphi$ in ${\cal D}_{D;i,j}$ with
$\|\Jc_J-\hat \varphi\|_{L_2(P_{\theta})}\leq \ve$. \\
To this end fix $a\in\R^p$; as $D_{a;j}$ is continuous,
the set $K_j^c$ is open, hence is the countable union
of $k$ dimensional intervals $J_m:=(l^{(m)};r^{(m)})$, $m\in \N$, with $l^{(m)}<r^{(m)}$ and $|D_{a;j}|>0$ on $J_m$. \\
So it suffices to show that any indicator to an interval $I=[\tilde l;\tilde r]$, $I\subset J_m$ with endpoints
s.t.\  $Q(\partial I)=0$
may be approximated in $L_2(Q)$ by functions in ${\cal D}_{D;i,j}$. 
But, for given $\ve>0$, Proposition~\ref{propdense} provides an element
$\varphi_0 \in {\cal C}_c^{\infty}(\R^k,\R)$ such that $\|\varphi_0-\Jc_{I}\|_{Q}<\ve$. By construction its anti-derivative 
$\psi_0((x\!:\!y)_i):=\int \Jc(_{\tilde l^{(m)}_i}\le z\le x)\varphi_0((z\!:\!y)_i))/D_{a;j}((z\!:\!y)_i))\,\lambda(dz)$ lies in
${\cal C}^{\infty}(\Rq^k,\R)$ hence $\varphi_0$ in ${\cal D}_{D;i,j}$. In particular we may
approximate the $Q$ measure for $k$-dimensional intervals disjoint to $K_j$, which determines $Q$.
\end{proof}

\subsection{Proof of the Main Theorem}
\noindent\textbf{\boldmath$\mbox{\rm\bf(ii)} \Rightarrow \mbox{\rm\bf(i)}$ of Theorem~\ref{dasTHM}}\\
In order to avoid specializing the case $k=1$, define $V:=0$ there.\\
Fix $a\in\R^p$, $|a|=1$. $f$ being a density, $\lambda(\{f=0,\pxI{i} f\not=0\})=0$ for each $i$ and we may write
\begin{eqnarray*}
\lefteqn{\int  (\pxI{i}\!\varphi)  D_{a;i}\,dP_{\theta}\!\stackrel{}{=}\!\int_{K^c}\!\!  (\pxI{i}\!\varphi)  D_{a;i}\,dP_{\theta}\!=\!
\int_{\iota(K^c)}\!\!\!\!  [(\pxI{i}\!\varphi)  D_{a;i}]\!\circ\!\tth1\,dF%=%}&&\\
%&
\!\!\stackrel{\Ss(ii)(a)}{=}\!\!\! %&
\int_{\iota(K^c)} \!\!\!\! [(\pxI{i}\!\varphi) D_{a;i}]\!\circ\!\tth1 f\,d\lambda^k=}&&\\
\!\!\!\!\!\!\!&\!\!\!=&\!\!\!\!\!\!\!\int_{K^c}  (\pxI{i}\!\varphi) D_{a;i} f_{\theta}\,\dttx \,d\lambda^k
\stackrel{(\ast)}{=} -\int_{K^c} \!\!  \varphi\, f_{\theta}\, \pxI{i}(D_{a;i}\,\dttx ) +
\varphi \, D_{a;i}\,\dttx\,  \pxI{i}f_{\theta}
\,d\lambda^k=\\
\!\!\!\!\!\!\!&\!\!\!=&\!\!\!\!\!\!\!-\int_{K^c}\!\!\!   \varphi p_{\theta}\, [\frac{\pxI{i}(\dttx\,D_{a;i})}{\dttx} +
\frac{D_{a;i} \pxI{i}f_{\theta}}{f_{\theta}}]
\,d\lambda^k%=\\
%&
\!\stackrel{(\ast\ast)}{=}\!%&
-\int \!\!  \varphi p_{\theta}\, [\frac{\pxI{i}(\dttx\,D_{a;i})}{\dttx} +
\frac{D_{a;i} \pxI{i}f_{\theta}}{f_{\theta}}]
\,d\lambda^k
\end{eqnarray*}
In equation $(\!\,\ast\!\,)$ we use that by (ii)(c), on $K^c$, $f$ is a.c.\ $\lambda^k$ a.e.\ so that integration by parts
integration by parts is available without having to care about border values due to (ii)(b). By (ii)(d) the resulting integrand on the RHS of
$(\,\!\ast\,\!)$ is in $L_2(P_{\theta})$.
In equation $(\!\,\ast\ast\,\!)$, we used the fact that in each expression considered above, there appears at least one $D_{a;i}$
or a derivative $\partial_{x_i}D_{a;i}$;
Lemma~\ref{Hajekl} applies and hence
$$\lambda^{k}(\{D_{a;i}=0\},\{\partial_{x_i}D_{a;i}\not=0\})=0$$

\noindent\textbf{\mbox{Representations~\eqref{ass1} and \eqref{ass2}:}}\\
 Writing out
$
\partial_xf_{\theta}=(\partial_x \iota_{\theta})(\partial_x f)\!\circ\! \iota_{\theta}
%\pxI{i}f_{\theta}=\Tsum_{m=1}^k (\pxI{m} f)\!\circ\! \iota_{\theta} \pxi \iota_{\theta;m}
$,
we see that
\begin{eqnarray}
&&D_{a;\cdot}^{\tau} \partial_xf_{\theta} = a^{\tau} (\partial_{\theta} \iota_{\theta}) J (\partial_x \iota_{\theta})(\partial_x f)\!\circ\! \iota_{\theta}%=\\
%&=&
=
a^{\tau} (\partial_{\theta} \iota_{\theta}) (\partial_x f)\!\circ\! \iota_{\theta}=a^{\tau}\partial_\theta f_{\theta}\qquad \label{hinher}
%\Tsum_{i=1}^k D_{a;i} \pxI{i}f_{\theta}&=&%\nonumber\\
%&=&
%\Tsum_{i,j,l,m=1}^k a_j \pthj\tathl  J_{l,i}
% (\pxI{m} f)\!\circ\! \iota_{\theta} \pxi \iota_{\theta;m}=\nonumber\\
%&=&\Tsum_{j,l=1}^k a_j \pthj\tathl  (\pxI{l} f)\!\circ\! \iota_{\theta} =a^{\tau} \partial_\theta f_{\theta}
\end{eqnarray}
Thus we get
%\begin{eqnarray*}
%&&
$$
\frac{
\sum_i\pxI{i}[D_{a;i}\,\dttx ]
%{\rm div}_x[D_{a;\cdot}\,\dttx ]
}{\dttx}+
\frac{ D_{a;\cdot}^{\tau} \partial_xf_{\theta}}{f_{\theta}}-V_a
\stackrel{\mbox{\tiny Def.~$V$}}{=}
%\frac
\frac{a^{\tau}\pth\,\dttx }{\dttx}+
\frac{D_{a;\cdot}^{\tau} \partial_x f_{\theta}}{f_{\theta}}
%{\int \varphi^2 \, dP_{\theta}}
%=\\
%&
\stackrel{\mbox{\tiny\eqref{hinher}}}{=}%&
%\frac
\frac{a^{\tau}\pth\,\dttx }{\dttx}+
\frac{a^{\tau}\pth f_{\theta}}{f_{\theta}}%=\frac{a^{\tau}\pth\,p_{\theta} }{p_{\theta}}
=a^{\tau}\Lambda_{\theta},
$$
%\end{eqnarray*}
so $\Lambda_{\theta}\in L_2^p(P_{\theta})$ by (ii)(d) and hence
% and writing $f_{\theta}=f_{\theta},$ $p_{\theta}=f_{\theta}\,\dttx$,
\begin{eqnarray*}
&&%\frac
{\left(\int  \nabla\varphi^{\tau} D_{a}+\varphi V_a\,dP_{\theta}\right)^2}
%{\int \varphi^2\,dP_{\theta}}
=
{\left(\int \varphi \frac{a^{\tau}\pth\,p_{\theta} }{p_{\theta}} \, dP_{\theta}\right)^2}
%{\int \varphi^2 \, dP_{\theta}}
\leq \int [a^{\tau}\Lambda_{\theta} ]^2 \, dP_{\theta} \,\,{\int \varphi^2 \, dP_{\theta}},
\end{eqnarray*}
which shows that ${\cal I}(F;a)\leq \int (a^{\tau}\Lambda_{\theta})^2\, dP_{\theta}$.
The upper bound may be approximated by a sequence $\varphi_n\in {\cal D}_k$ tending
to $a^{\tau}\Lambda_{\theta}$ in $L_2(P_{\theta})$ entailing \eqref{ass1} and \eqref{ass2}.\\
%\end{f1proof}
%
%
%
%\noindent

\noindent\textbf{\boldmath$\mbox{\rm\bf(i)} \Rightarrow \mbox{\rm\bf(ii)}$ in Theorem~\ref{dasTHM}}\\
%\\
We will give a proof largely paralleling \citet{Hu:81}, although
we may skip some of his arguments.\\[0.3ex]
{\bf Well defined operators and Riesz-Fr\'echet: }%\\[0.3ex]
 We consider the linear functionals $\tilde T_{a;i}$ from \eqref{Tidef}, defined on the
 dense subset ${\cal C}^{\infty}(\Rq^k,\R)$ of $L_2(P_{\theta})$, which are well defined
 due to \eqref{welldef}. In particular $\tilde T_{a;i}$
are  bounded linear operators with squared
operator norms bounded by ${\cal I}_{\theta}(F;a)$, hence can be extended by continuity
to continuous linear operators $T_{a;i}: L_2(P_{\theta}) \to \R$ with the same
operator norms. Thus  Riesz Fr\'echet %[\citet{Ru:68},...]
applies, yielding generating elements $g_{a;i}\in L_2(P_{\theta})$ s.t.
\begin{equation} \label{gidef} T_{a;i}(\varphi)= -\int g_{a;i} \varphi \,dP_{\theta}
\qquad \forall \varphi \in L_2(P_{\theta})\qquad\quad\mbox{and } \|g_{a;i}\|_{L_2(P_{\theta})}^2=\|\tilde T_{a;i}\|
\end{equation}
We conclude inductively for $i=1,\ldots,k$.\\[0.5ex]
\fbox{\mbox{\protect\boldmath ${i=1}$\unboldmath}}
{\bf Using Fubini: }%\\[0.3ex]
We have for $\varphi\in{\cal D}_k$
\begin{equation}
T_{a;1}(\varphi)=\int \,D_{a;1}\, \pxI{1}\varphi\,dP_{\theta}\label{a1gl}
\end{equation}
On the other hand by the fundamental theorem of calculus,
%\begin{eqnarray*}
%\lefteqn{
$$T_{a;1}(\varphi)=-\int \varphi  g_{a;1}\,dP_{\theta}=%}&&\\
%&=&=
\int_{\sR^{k}} \int_{\sR} \Jc_{\{x_1\geq y_1\}}
\pxI{1}\varphi(x_1,y_{2:k}) \lambda(dx_1)\,  g_{a;1}(y)\,P_{\theta}(dy).$$
%\end{eqnarray*}
Now for each compact $A$, the integrand $\tilde h_1(x_1,y;A)=\Jc_A(x_1)\Jc_{\{x_1\geq y_1\}}
 g_{a;1}(y)$ is  in %the product space
$L_{1}(\lambda(dx_1) \otimes P_{\theta}(dy))$.
Fubini for Markov kernels thus yields  a $\lambda \otimes P_{\theta;\,2:k}$-null set $N_1$
such that for $(x_1,y_{2:k})\in N_1^c$, $x_1\in A$, the %section--wise defined
 function $h_{1}(y_1):= \tilde h_1(x_1,y;A)$
belongs to  $L_{1}\big(P_{\theta;\,1|2:k}(dy_1|y_{2:k})\big)$. %A closer look yields $N_1=\R \times \check N_1$
%for some $\check N_1 \in \B^{k-1}$.
We now define for $D_{a;1}\not=0$ the function $p^{(a;1;A)}_{1|2:k}$ as
\begin{equation}\label{p1def}
[D_{a;1}p^{(a;1;A)}_{1|2:k}](x_1,y_{2:k}):=\left\{\begin{array}
{ll} \int h_{1}(y_1)\,P_{\theta;\,1|2:k}(dy_1|y_{2:k}) &%\\
\mbox{for}\quad (x_1,y_{2:k})\in  N_1^c,\;x_1\in A\\
\qquad0&\mbox{else}
\end{array}\right.
\end{equation}
where obviously the dependence in $A$ is such that for another compact $A'\supset A$,
$p^{(a;1;A)}_{1|2:k}=p^{(a;1;A')}_{1|2:k}\Jc_A(x_1)$. Hence for arbitrary $x_1$ take $A$ such that $x_1\in A$
and eliminate the index $A$ in the superscript where it is clear from the context.
We also note that by Cauchy-Schwarz, for $P_{\theta;2:k}(dy_{2:k})$-a.e. $y_{2:k}$,
\begin{equation} \label{xfbounda}
|[D_{a;1}p^{(a;1)}_{1|2:k}](x_1,y_{2:k})|^2 \leq \int g_{a;1}(y)^2 \,P_{\theta;\,1|2:k}(dy_1|y_{2:k}) < \infty
\end{equation}
\\[0.3ex]
%or, equivalently,
%$$
%p^{(a;1)}_{1|2:k}(x_1,y_{2:k})=\Jc_{\{D_{a;1}\not=0\}}\int_{-\infty}^{x_1}  g_{a;1}(y)\,P_{\theta;\,1|2:k}(dy_1|y_{2:k})/D_{a;1}(x_1,y_{2:k}).$$
%
{\bf Getting rid of the dependence on $a$: }%\\[0.3ex]
To understand, how $p^{(a;1)}_{1|2:k}$ is related to $p^{(a';1)}_{1|2:k}$ for $a\not=a' \in \R^p$, we consider
again \eqref{Tidef}, \eqref{p1def}: Both sides of the latter must be of form
$\tilde W^{\tau}a$ for some $\R^p$ valued $\tilde W$ independent of $a$; in particular
\begin{equation}\label{wdef}
g_{a;1}=w_1^{\tau}a
\end{equation}
for some $w_1\in L^p_2(P_{\theta})$.
Hence
\begin{equation}
p^{(a;1)}_{1|2:k}=p^{(a';1)}_{1|2:k}\qquad \mbox{ on }\{D_{a;1}^{(k)}\not=0\}\cap \{D_{a';1}^{(k)}\not=0\}
\end{equation}
and, as we only need $p$ orthogonal values of $a$ to specify $w_1$, we arrive at a
maximally extended  $p^{(1)}_{1|2:k}$ defined on
$K_1^c=\{D_{\cdot;1}\not=0\}$. Also, for $P_{\theta;2:k}(dy_{2:k})$-a.e. $y_{2:k}$,
\begin{equation} \label{xfbound}
|[D_{a;1}p^{(1)}_{1|2:k}](x_1,y_{2:k})|^2 \leq |a|^2\int |w_{1}(y)|^2 \,P_{\theta;\,1|2:k}(dy_1|y_{2:k})
\end{equation}
\\[0.3ex]
{\bf
{\mbox{\protect\boldmath $p^{(1)}_{1|2}$ \unboldmath}}
 \bf is a density: }%\\[0.3ex]
Plugging in this maximal definition, we get for $\varphi \in {\cal D}_k$, using $A={\rm supp}(\varphi)$,
\begin{eqnarray}
T_{a;1}(\varphi)&=&%\!\!&\!\!=\!\!&\!\!
\int \,D_{a;1}\, \pxI{1}\varphi\,dP_{\theta}=
%\int \Jc_{\{D_{a;1}\not=0\}} \,D_{a;1}\, \pxI{1}\varphi\,dP_{\theta}= %\nonumber\\
%\!\!&\!\!=\!\!&\!\!
\int [D_{a;1}\,   \pxI{1}\varphi\,\, p^{(1)}_{1|2:k} ](x_1,y_{2:k})
\lambda(dx_1) P_{\theta;\,2:k}(dy_{2:k})\label{a2gl}.
\end{eqnarray}
where integrability of the integrands follows
from Remark~\ref{D2Rem}(e) and (C1)/(Ck), and for the right one from \eqref{xfbound},
which also entails that $P_{\theta;\,2:k}(dy_{2:k})$-a.s., 
$x_1\mapsto D_{a;1}\,p^{(1)}_{1|2:k}$ is the $\lambda$-density
of a $\sigma$-finite signed measure.
Hence, we have shown that $P_{\theta}(dx_1,dy_{2:k})$
and  $p_{1|2:k}(x_1,y_{2:k}) \lambda(dx_1)P_{\theta;\,2:k}(dy_{2:k})$ when restricted to $K_1^c$ define the same functional on the set
${\cal D}_{D;1,1}$, which
is measure-determining for $\B^k\cap K_1^c$ due to Lemma~\ref{densel}.\\
Therefore, the restriction to compacts $A$ can be dropped entirely, and we may work with $A=\R$.
Using Fubini once again, we see that  on $K_1^c$, there is a  $P_{\theta;\,2:k}(dy_{2:k})$-null set $\tilde N_1$, s.t.\
for fixed $y_{2:k}\in \tilde N_1^c$, the function $p^{(1)}_{1|2:k}(x_1,y_{2:k})$
is a Lebesgue density of the regular conditional distribution $P^{(0)}_{\theta;1|2:k}(dx_1|y_{2:k})$,
hence non negative and in $L_1(\lambda)$.
%On the other hand, $g_{a;1}$ being in $L_2(P_{\theta})$, there is another
%$P_{\theta;\,2:k}(dy_{2:k})$--null-set $\bar N_1$ such that  for $y_{2:k}\in \bar N_1^c$,
%$|\int  g_{a;1}^2 \, dP_{\theta;1|2:k}|<\infty$. Extend  $\bar N_1 \cup \tilde N_1$ to $\tilde M_1:=\R\times(\bar  N_1\cup  \tilde N_1)$,
%which is of $P^{(0)}_{\theta}$-measure 0.
\\[0.3ex]
{\bf Replacing  {\mbox{\protect\boldmath $K_1$ \unboldmath}} by {\mbox{\protect\boldmath $K$ \unboldmath}:} }%\\[0.3ex]
Similarly as for the dependence on $a$, we may extend the definition of $p^{(1)}_1(x_1,y_{2:k})$
to the set $K^c$:
Any $\partial_{x_i} \varphi$ for $\varphi \in {\cal D}_k$ may also be interpreted as
$\partial_{x_j} \tilde \varphi$ for some $\tilde \varphi\in {\cal D}_k$. More specifically, $\tilde \varphi=\varphi \!\circ\! \pi_{i,j}$ with $\pi_{i,j}$
the permutation of
coordinates $i$ and $j$. Thus introducing for $1\leq i,l\leq k$
operators $\tilde T_{a;i,j}:{\cal D}_k\to \R$, $\varphi \mapsto \int \partial_{x_i} \varphi D_{a;j} dP_{\theta}$,
we amply see their boundedness in operator norm by $\|T_{a;j}\|$, hence extending them to $L_2(P_{\theta})$ as before, giving operators $T_{a;i,j}$,
we also get generating elements $g_{a;i,j}\in L_2(P_{\theta})$ by Riesz-Fr\'echet and eventually, using denseness
of ${\cal D}_{D;i,j}$ in $L_2(P^{(j)}_{\theta})$, we obtain correspondingly defined
$p^{(j)}_{1|2:k}$ for $j=1,\ldots k$. Now  $p^{(j)}_{1|2:k}$ being Lebesgue densities of $P^{(0)}_{\theta;1|2:k}(dx_1|y_{2:k})$,
there is a $P_{\theta;2:K}$-null set---for simplicity again $\tilde N_1$---such that for $y \in \tilde N_1^c$,
for each pair $j_1\not=j_2$,
\begin{equation}
p^{(j_1)}_{1|2:k}((x,y)_1)=p^{(j_2)}_{1|2:k}((x,y)_1)\quad [\lambda(dx)]\qquad \mbox{ on }K_{j_1}^c\cap K_{j_2}^c,
\end{equation}
so we may indeed speak of a maximally extended $p_{1|2:k}$ defined on $K^c$.\\[0.3ex]
%
%
%{\bf Skipped steps}\\[0.3ex]
%
%We note that contrary to the proof
%in \citet{Hu:81}, we did not need to show pointwise boundedness of
%$p_1(x_1,y_{2:k})$ by $P_{\theta;\,1|2:k}((-\infty;x_1]|y_{2:k})\int
%g_{a;1}^2\,dP_{\theta}$, nor did we need that $p_1$ tends to 0 for $x_1\to \pm \infty$;
%we thus may skip the approximation to show that  $T_{a;1}1=\int g\,dP_{\theta}=0$ [(4.5) in \citet{Hu:81}].\\[1ex]
%
%
% we see that, on $K^c$,
%  up to a $\lambda \otimes P_{\theta;2:k}$--null set of values $y$, we have shown that
%the function $x_1\mapsto[D_{a;1}p_{1|2:k}]((x_1\!:\!y)_1)$
%is absolutely continuous.
%
%
%%%%%%%%%%%%%%%%%%%%%%%%%%%%%%%%%5
%
{\fbox{\protect\boldmath ${i-1\to i}$\unboldmath}}
%%%%%%%%%%%%%%%%%%%%%%%%%%%%%%%%%
%
%
Assume we have already shown that there is a $P_{\theta;i:k}$-null set $\tilde N_{i-1}$ such that for $y_{i:k}\in \tilde N_{i-1}^c$,
 $P^{(0)}_{\theta}$ admits some conditional density,
\begin{equation}\label{IV}
p_{1:i-1|i:k}(x_{1:i-1},y_{i:k}) \lambda^{i-1}(dx_{1:i-1})
=P^{(0)}_{\theta;\,1:i-1|i:k}(dx_{1:i-1}|y_{i:k}).
\end{equation}
%Assume in particular that
%we have already shown that for $j\leq i-1$,
%for $P_{\theta;\,-j}$--a.e.\ $y\in\R^{k-1}$ the function
%$x\mapsto[D_{a;j }p_{1:j}]((x\!:\!y)_{j})$ is a.c.
%
Arguing just as for $i=1$, we get
\begin{eqnarray*}
T_{a;i}(\varphi)\!\!&\!\!=\!\!&\!\!\int \,D_{a;i}\, \pxI{i}\varphi\,dP_{\theta}=
\int \Jc_{\{D_{a;i}\not=0\}} \,D_{a;i}\, \pxI{i}\varphi\,dP_{\theta}= %\nonumber\\
%\!\!&\!\!=\!\!&\!\!
-\int \varphi  g_{a;i}\,dP_{\theta}.
\end{eqnarray*}
 Thus  using the induction assumptions we proceed as before,
 i.e.; define $\tilde h_i(x_i,y;A)$ for some compact $A$, a the section-wise defined function
 $h_{i}(y_i):= \tilde h_i(x_i,y;A)$, the function $p^{(a;i;A)}_{1:i|i+1:k}$
 which extends to $\R$ giving $p^{(a;i)}_{1:i|i+1:k}$, and where the
 dependence on $a$ may be dropped, giving $p^{(i)}_{1:i|i+1:k}$.
 As this defines the same functional on the set ${\cal D}_{D;i,i}$
 as the Markov kernel $P_{1:i|i+1:k}$, by Lemma~\ref{densel}, $p^{(i)}_{1:i|i+1:k}$
 is a conditional density defined on $K_i^c$. Using  the coordinate permutation argument
 to drop the dependence on $K_i$, we obtain $p_{1:i|i+1:k}$
 defined on $K^c$.

 Hence the induction is complete, and we have shown that
 $P_{\theta}^{(0)}$ admits a $\lambda^k$ density $p_{\theta}$ which we denote by
$$
p_{\theta}(x):=p_{\theta}(x_{1:k}):=p_{\theta;1:k}(x_{1:k}):=p_{1:k}(x_{1:k}).
$$
%\\[0.3ex]
%
%
\noindent{\bf Showing {\mbox{\protect\boldmath $g_{a;i}=0\;[\bar P^{(0)}_{\theta}]$ \unboldmath}:} }%\\[0.3ex]
%\\[0.3ex]
%
Writing \eqref{p1def} and its analogue for general $i$ for any fixed $a\in\R^p$ with $P^{(0)}_{\theta}$ and $\bar P^{(0)}_{\theta}$, we see that
by Fubini, for $y$ outside a $P_{\theta;-i}$-null set,
\begin{equation}\label{a.ci}
[D_{a;i}p_{1:i|i+1:k}]((x\!:\!y)_i):=
\int_{-\infty}^{x} g_{a;i}((z\!:\!y)_i)\,p_{\theta}((z\!:\!y)_i)\lambda(dz)+
\gamma_{a;i}((x\!:\!y)_i)
\end{equation}
with
\begin{equation}\label{gamidef}
\gamma_{a;i}((x\!:\!y)_i):=\int_{-\infty}^{x}  g_{a;i}((z\!:\!y)_i)\,\bar P^{(0)}_{\theta;i|-i}(dz|y)
\end{equation}
We next show that for fixed $a\in \R^p$ and fixed $y$ outside  a $P_{\theta;-i}$-null set,
the value of $\gamma_{a;i}\equiv 0 $:\\
To this end we show that for any Borel subset $B$ of $K$ or equivalently for any proper or improper
interval $I=[l,r]\subset K$,
$$
\int_{I}  g_{a;i}((z\!:\!y)_i)\,\bar P^{(0)}_{\theta;i|-i}(dz|y)=0
$$
Of course, $\int_I \,dP^{(0)}_{\theta;i|-i}(dz|y)=0$, $[P_{\theta;-i}]$.
Consider $\phi_n\in{\cal D}_k$ with
$0\leq \phi_n\leq 1$, $\phi_n\equiv  1$ on $I$ and  $\phi_n\equiv 0$ for
$\{x\in \R^k \,|\, {\rm dist}(x,I)>1/n\}$, and $|\pxI{i}\phi_n| \leq 6n$.
The last bound is chosen according to the bound $|\dot \varphi|\leq 2 c_0 \delta$ from Proposition~\ref{propdense}.  %\\
Then $\phi_n \to \Jc_{I}$ pointwise, hence by dominated convergence and Cauchy-Schwartz we get
$$
\int_I  \phi^2_n \, dP_{\theta;i|-i}^{(0)}=\Lo(n^0),
\qquad\Big|\int  g_{a;i} \phi_n \, dP_{\theta;i|-i}^{(0)}\Big|=\Lo(n^0).
$$
On the other hand let $C:=\max\{|\pxI{i}D_{a;i}|\,\big|\,x \in{\rm supp}(\phi_1)\}$.
Then as $D_{a;i}=0$ on $I$, and because ${\rm supp}(\pxI{i} \phi_n)\subset \{x\in \R^k \,|\, 0<{\rm dist}(x,I)\leq 1/n\}$,
 we have for $x\in {\rm supp}(\phi_1)$ that
$|D_{a;i}(x)|\leq C |x|\leq C/n$, and hence
 $$| \pxI{i} \phi_n D_{a;i}|\leq 6C \Jc_{\{{\rm \Ss supp}(\phi_n)\cap I^c\}}.$$
Thus, due to the shrinking of $\{{\rm supp}(\phi_n)\cap I^c\}$,
for $x\not \in I$, $[\pxI{i} \phi_n D_{a;i}](x) \to 0$.
Furthermore $[\pxI{i}\phi_n D_{a;i}](x) = 0$ on $I$, as $ D_{a;i}(x)=0$ on $I\subset K$ by definition,
hence also $[\pxI{i} \phi_n D_{a;i}]\to 0$ pointwise and with
dominated convergence
$$
\int D_{a;i}\,\pxI{i}\phi_n \, dP_{\theta;i|-i}=\Lo(n^0).
$$
So we have %for $n$ so large that $x_{j}$ is the only zero in  $I_j=[x_{j}-1\!/\!n; x_{j}+1\!/\!n]$
\begin{eqnarray*}
\Lo(n^0)+\int_I  g_{a;i}(z)\,\bar P^{(0)}_{i|-i}(dz)
&=&
\int_{I}  g_{a;i}  \phi_n \,  d\bar P^{(0)}_{i|-i} =
%\int_{I}  g_{a;i}   \phi_n\, (dP_{i|-i}-dP^{(0)}_{i|-i})=\\
%&=&
-\int_{I} \pxI{i} \phi_n D_{a;i}\,dP_{i|-i}-
 \int_{I}  g_{a;i}   \phi_n\, dP^{(0)}_{i|-i} =\Lo(n^0),
\end{eqnarray*}
which implies $\gamma_{a;i}\equiv 0$ and hence, integrating by $\bar P^{(0)}_{\theta;-i}$ over any $A\in\B^{k-1}$, 
%\begin{equation} \label{g=0}
$g_{a;i}=0\;[\bar P^{(0)}_{\theta}]$.
%\end{equation}
Similarly, we obtain
\begin{equation} \label{gij=0}
g_{a;i,j}=0\qquad[\bar P^{(0)}_{\theta}].
\end{equation}
This also entails that
\begin{eqnarray}\label{gip1def}
\int \pxi\!\varphi \,D_{a;j} dP_{\theta}&=&
%\int \pxi\!\varphi \,D_{a;j} dP^{(0)}_{\theta}=
\int \pxi\!\varphi \,D_{a;j} p_{\theta}\,d\lambda^k=%\nonumber \\
%&=&
-\int  \varphi g_{a;i,j}  dP_{\theta} \stackrel{\ssr{\eqref{gij=0}}}{=}
-\int  \varphi g_{a;i,j} p_{\theta}\,d\lambda^k.
\end{eqnarray}
\\[0.3ex]
\noindent{\bf Application of Proposition~\ref{mazja}: }% and \ref{mazja2}}
%\\[0.3ex]
%
From \eqref{gip1def}, we get 
$
%\begin{equation} \label{gipdef}
\int \pxi\!\varphi \,D_{a;j} p_{\theta}\,d\lambda^k=
-\int  \varphi g_{a;i,j}  p_{\theta}\,d\lambda^k
%\end{equation}
$ for all $\varphi \in {\cal D}_k$.
By Definition~\ref{weakderdef}, $g_{a;i,j}  p_{\theta}$ thus is the weak derivative
of $D_{a;j} p_{\theta}$ w.r.t.\ $x_i$. By Proposition~\ref{mazja} there is a modification of
$D_{a;j} p_{\theta}$ on a $\lambda^k$-null set such that this modification---for simplicity again
denoted by $D_{a;j} p_{\theta}$---is a.c.\ in $k$ dimensions.
Hence, for $\lambda^k$ a.e.\ $x$, $D_{a;j} p_{\theta}$ is differentiable w.r.t.\ $x_i$ in the classical sense
 with a derivative coinciding with $g_{a;i,j}  p_{\theta}$ up to a $\lambda^k$-null set.\\
 As $D_{a;j}$ is continuously differentiable,  $p_{\theta}$ is differentiable on $K_j^c$ for $\lambda^k$ a.e.\ $x$,
  and using again all the different $D_{a;j}$, $j=1,\ldots,k$, the same is even true on $K^c$.
\\[0.3ex]
\noindent{\bf Proof of (ii)(a)--(d):} %\\[0.3ex]
 Defining for $\theta \in \Theta$
 \begin{equation}
 f^{(\theta)}:=(p_{\theta}/\dttx)\!\circ\! \tau_{\theta},
 \end{equation}
 and recalling that $P_{\theta}=F\!\circ\!\tath$,
 we see that by the Lebesgue transformation formula, $f^{(\theta)}$ must be a density of $F$, hence the
 index $\theta$ may be dropped, and (ii)(a) follows. 
 Once again by the  transformation formula,
\begin{equation} \label{dichdef}
p_{\theta}=\dttx (f \!\circ\!\iota_{\theta})=\dttx f_{\theta}.
\end{equation}
and thus (ii)(c) holds.
%
%%%%%%%%%%%%%%%%%%%%%%%%%%%%%%%%%%%%%%%%%%%%%%%%%%%%%%%%%%%%%%%%%%%%%%%%%%%%%%%%%%%%%%%%%%%%%%%%%%%%%%%%%%%%%%%%%%%%%%%%%%%
%
For (ii)(b) we consider $\kappa$ defined analogously as for $k=1$ as inverse to $\ell$ from \eqref{ellk}:
We lift \eqref{Idefkk} to $[0,1]^k$, giving
$$\Big(\int_{[0,1]^k} \psi'\kappa q_{\theta} \, d\lambda \Big)^2\leq {\cal I}_{\theta}(F)
                                 \int_{[0,1]^k} \psi^2 \, d[\ell(P_{\theta})] \qquad
\forall\psi \in {\cal C}^{\infty}([0,1]^k,\R),$$
where $q_{\theta}=p_{\theta}\!\circ\!\kappa$, and we have to show that $[\kappa q_{\theta}](u)=0$ for $u\in\partial([0,1]^k)$.
We only show $u_1=1$, all other cases follow similarly.
Let $\psi_n \in{\cal D}_k$, $\psi_n \to \Jc_{\{1\}\times[0,1]^{k-1}}$ in $L_2(\ell(P_{\theta}))$ and pointwise.
Then by Fubini and by integration by parts
\begin{eqnarray*}
&&\int_{[0,1]^{k-1}}\int_0^1 [\partial_{x_1}(\psi_n)\kappa q_{\theta}]((x\!:\!y)_1) \, \lambda(dx)\,\lambda^{k-1}(dy) =\\
&=&\int_{[0,1]^{k-1}} \Big[ \psi_n\kappa q_{\theta} \big|_0^1- \int_0^1 g\!\circ\!\kappa \psi_n \, [\ell(P_{\theta})]_{1|2:k}(dx|y) \Big]\,[\ell(P_{\theta})]_{2:k}(dy)
\end{eqnarray*}
But $\int_{[0,1]^{k}} \psi_n^2 \, d[\ell(P_{\theta})] \to 0$ entails by Fubini,
$\int_0^1 \psi_n^2 \, d[\ell(P_{\theta})]_{1|2:k} \to 0$ $[\ell(P_{\theta})]_{2:k}(dy)$ a.e.\ and by
Cauchy-Schwartz that also $\int_0^1 g\!\circ\!\kappa \psi_n \, d[\ell(P_{\theta})]_{1|2:k}\to 0$
and hence
$$\big([\psi_n\kappa q_{\theta}]((1\!:\!y)_1)+\Lo(n^0)\big)^2\leq\Lo(n^0)\qquad \big[[\ell(P_{\theta})]_{2:k}(dy)\big]$$
and due to continuity of $[\psi_n\kappa q_{\theta}]$, (ii)(b) follows.
%
%%%%%%%%%%%%%%%%%%%%%%%%%%%%%%%%%%%%%%%%%%%%%%%%%%%%%%%%%%%%%%%%%%%%%%%%%%%%%%%%%%%%%%%%%%%%%%%%%%%%%%%%%%%%%%%%%%%%%%%%%%%
%
For (ii)(d) we proceed as in part (ii) $\Rightarrow$ (i)
\begin{eqnarray}
\lefteqn{(\sum_i g_{a;i}-V_a) p_{\theta}=(\sum_i\pxI{i}[D_{a;i}p_{\theta}])-V_a p_{\theta}=%}&&\nonumber\\
%&
\stackrel{\Ss{ \eqref{dichdef}}}{=}
%&
\sum_i\pxI{i}[D_{a;i}\dttx f_{\theta}] -V_a p_{\theta}=}&&\nonumber\\
&=& p_{\theta}(\frac{\sum_i\pxI{i}[D_{a;i}\,\dttx]}{\dttx}+\frac{\sum_i D_{a;i} \pxi f_{\theta}}{f_{\theta}}- V_a )=%\nonumber\\
%&=&
p_{\theta} (\frac{a^{\tau}\pthj \dttx}{\dttx}+\frac{\sum_i D_{a;i} \pxi f_{\theta}}{f_{\theta}})\label{erstes}\\
&=& p_{\theta} (\frac{a^{\tau}\pthj \dttx}{\dttx}+\frac{a^{\tau}\pthj f_{\theta}}{f_{\theta}})=p_{\theta} \frac{a^{\tau}\pthj p_{\theta}}{p_{\theta}}=
p_{\theta} a^{\tau}\Lambda_{\theta}.\label{zweites}
\end{eqnarray}
Now (ii)(c) follows from \eqref{erstes} and the fact that $V_a$  and all $g_{a;i}$ are in $L_2(P_{\theta})$, and
assertions \eqref{ass1} and  \eqref{ass2} from \eqref{zweites}.
\qed \medskip%

%\end{f3proof}
The next corollary shows that  $K$ is uninformative for our problem in the sense that $\bar P^{(0)}_{\theta}$-a.e.\
$\Lambda_{\theta}=0$.
\begin{Cor}\label{clt=0}
Under the assumptions of Theorem~\ref{dasTHM}, setting
\begin{equation} \label{Lbdef}
\Lambda_{\theta}:=-V+\sum_i w_i
\end{equation} with $w_i$ from \eqref{wdef} (for $i=1$) and respectively defined otherwise, it holds that
\begin{equation}\label{lt=0}
\Lambda_{\theta}=0\qquad [\bar P^{(0)}_{\theta}]
\end{equation}
\end{Cor}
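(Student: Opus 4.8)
The plan is to establish \eqref{lt=0} by treating the two pieces of \eqref{Lbdef} separately, exploiting throughout that $\bar P^{(0)}_{\theta}$ is carried by the set $K=\{D=0\}$, on which the transformation is locally stationary in $\theta$.

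First I would dispose of the $\sum_i w_i$ part. For $k=1$ this is immediate, since $V\equiv 0$ there by the convention adopted at the start of the (ii)$\Rightarrow$(i) argument, so $\Lambda_{\theta}=w_1$; as \eqref{wdef} gives $g_{a;1}=w_1^{\tau}a$ and the (i)$\Rightarrow$(ii) proof already showed $g_{a;1}=0\;[\bar P^{(0)}_{\theta}]$, letting $a$ run through a basis of $\R^p$ forces $w_1=0\;[\bar P^{(0)}_{\theta}]$. For $k>1$ the same reasoning applies index by index: because $D_{a;i}=e_i^{\tau}Da$ is linear in $a$, so is the functional defining $g_{a;i}$ and hence its Riesz representative, whence $g_{a;i}=w_i^{\tau}a$ exactly as \eqref{wdef} does for $i=1$; the identity $g_{a;i}=0\;[\bar P^{(0)}_{\theta}]$ obtained in the main proof then yields $w_i=0\;[\bar P^{(0)}_{\theta}]$ for each $i$, and hence $\sum_i w_i=0\;[\bar P^{(0)}_{\theta}]$.

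The remaining and, I expect, the only delicate point is to show $V=0\;[\bar P^{(0)}_{\theta}]$, i.e.\ $V=0$ pointwise on $K$. The key observation is that on $K$ the full matrix $D=J^{\tau}\ptth$ vanishes, and since $J=(\px\tath)^{-1}$ is invertible by \eqref{Jdef}, this forces $\ptth=0$ there, that is $\pthj\tathl=0$ on $K$ for all $j,l$. I would then invoke the representation \eqref{Vnummer}, which is valid pointwise under (Dk) and which exhibits in every single summand of $V_j$ the factor $\pthj\tathl$. Each summand therefore vanishes on $K$, giving $V=0$ there and hence $V=0\;[\bar P^{(0)}_{\theta}]$.

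Combining the two parts gives $\Lambda_{\theta}=-V+\sum_i w_i=0\;[\bar P^{(0)}_{\theta}]$, which is \eqref{lt=0}. The hard part is conceptual rather than computational: one must notice that it is the antisymmetric second-derivative form \eqref{Vnummer}---not the defining quotient \eqref{Vdef}, in which the cancellation of the $\pthj\dttx$ term against part of $\sum_i\pxi(\dttx D_{i,j})$ is hidden---that makes the vanishing of $V$ on $K$ transparent. Once \eqref{Vnummer} is in hand, the entire argument is purely algebraic.
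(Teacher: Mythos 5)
Your proposal is correct and follows essentially the same route as the paper: the paper likewise disposes of $\sum_i w_i$ via the identity $g_{a;i,j}=0\;[\bar P^{(0)}_{\theta}]$ established in the main proof, and then argues $x\in K\iff D(x)=0\iff \partial_{\theta}\iota_{\theta}(x)=0\Rightarrow V(x)=0$ by appeal to \eqref{Vnummer}. Your write-up merely makes explicit the two steps the paper leaves terse, namely that invertibility of $J$ turns $D=0$ into $\pth\tath=0$ and that every summand of \eqref{Vnummer} carries a factor $\pthj\tathl$.
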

\begin{proof}{}
\eqref{Lbdef} is defined according to \eqref{zweites} on $K^c$, and
as \eqref{gij=0} entails $\sum_iw_i =0\;[\bar P^{(0)}_{\theta}]$, the assertion is a direct consequence of
$ x\in K \;\; \iff\;\; D(x)=0 \;\; \iff\;\; \partial_{\theta}\iota_{\theta}(x)=0 \;\; \stackrel{\ssr{\eqref{Vnummer}}}{\Rightarrow} \;\;
V(x)=0. $
%Therefore, we get for any $a\in\R^p$ and $\varphi \in {\cal D}_k$
%\begin{eqnarray*}
%\int \varphi a^{\tau}\Lambda_{\theta} \, dP_{\theta} &=&
%-\int \nabla \varphi^{\tau} D_a^{(k)}+ \varphi V_a \, dP_{\theta}=
%\int  \varphi (\sum_i g_{a;i}-V_a) \, dP_{\theta}=\\
%&=&\int  \varphi (\sum_i g_{a;i}-V_a) \, dP^{(0)}_{\theta}=\int \varphi a^{\tau}\Lambda_{\theta} \, dP^{(0)}_{\theta}
%\end{eqnarray*}
%and as ${\cal D}_k$ is dense in $L_2(P_{\theta})$, \eqref{lt=0} follows.
\end{proof}
\subsection{Proofs of Sections~6}
For the proof of Proposition~\ref{klsl2} we need two lemmas:
\begin{Lem}\label{GHLem}
The multivariate location model~\ref{multilok} is $L_2$-differentiable iff it
is ``partially'' in each coordinate separately, i.e.;
\begin{equation}
\int \Big(\sqrt{f}(x_1,\ldots,x_j+h,\ldots,x_k)-\sqrt{f}(x)(1-\Tfrac{1}{2}\Lambda_{f,j}(x))\Big)^2
\lambda^k(dx)=\Lo(h)
\end{equation}
\end{Lem}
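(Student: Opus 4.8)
The plan is to reduce to the reference parameter and then run a coordinatewise telescoping argument. Because $P_\theta$ has $\lambda^k$-density $f(\cdot-\theta)$ and $\lambda^k$ is translation invariant, $L_2$-differentiability in $\theta$ at any parameter value is equivalent to $L_2$-differentiability at $\theta=0$, and there it becomes a statement about spatial shifts of $\xi:=\sqrt f\in L_2(\lambda^k)$. Using $\pxI{j}\xi=-\Tfrac{1}{2}\Lambda_{f,j}\,\xi$, the displayed coordinate-$j$ hypothesis reads, in $L_2(\lambda^k)$-norm, $\|\xi(\cdot+h e_j)-\xi-h\,\pxI{j}\xi\|_{L_2}=\Lo(h)$, whereas joint $L_2$-differentiability reads $\|\xi(\cdot+h)-\xi-\nabla\xi^{\tau}h\|_{L_2}=\Lo(|h|)$; in particular the hypothesis forces each $\pxI{j}\xi$ to lie in $L_2(\lambda^k)$.

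The direction $\Rightarrow$ is immediate: restricting the joint increment to a coordinate axis $h=t e_j$ reproduces the partial statement with remainder $\Lo(t)$.

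For $\Leftarrow$ I would telescope along the coordinate directions. Put $s_0:=0$ and $s_m:=(h_1,\dots,h_m,0,\dots,0)$, so that $s_k=h$ and $s_m=s_{m-1}+h_m e_m$, and write
\begin{equation}
\xi(\cdot+h)-\xi-\nabla\xi^{\tau}h=\sum_{m=1}^{k}\Big[\xi(\cdot+s_m)-\xi(\cdot+s_{m-1})-h_m\,\pxI{m}\xi\Big].
\end{equation}
The $m$-th bracket is a pure increment in coordinate $m$ of size $h_m$, taken at the argument $\cdot+s_{m-1}$; applying the partial hypothesis at step $h_m$ and using translation invariance of $\lambda^k$ to pull the shift $s_{m-1}$ out of the norm gives
\begin{equation}
\xi(\cdot+s_m)-\xi(\cdot+s_{m-1})=h_m\,(\pxI{m}\xi)(\cdot+s_{m-1})+r_m,\qquad \|r_m\|_{L_2}=\Lo(h_m)=\Lo(|h|),
\end{equation}
the last equality because $|h_m|\le|h|$. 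Subtracting $h_m\,\pxI{m}\xi$, summing, and applying the triangle inequality yields
\begin{equation}
\big\|\xi(\cdot+h)-\xi-\nabla\xi^{\tau}h\big\|_{L_2}\le \sum_{m=1}^{k}|h_m|\,\big\|(\pxI{m}\xi)(\cdot+s_{m-1})-\pxI{m}\xi\big\|_{L_2}+\sum_{m=1}^{k}\|r_m\|_{L_2}.
\end{equation}

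The second sum is $\Lo(|h|)$ by the remainder estimate. The first sum is the heart of the matter: bounding $|h_m|\le|h|$, it remains to show that each factor $\|(\pxI{m}\xi)(\cdot+s_{m-1})-\pxI{m}\xi\|_{L_2}$ tends to $0$ as $h\to0$. This is exactly the $L_2$-continuity of translation applied to $\pxI{m}\xi$, which is legitimate precisely because $\pxI{m}\xi\in L_2(\lambda^k)$ by the hypothesis; since $s_{m-1}\to0$ together with $h$, each factor is $\Lo(1)$ and the first sum is $\Lo(|h|)$. Hence the left-hand side is $\Lo(|h|)$, which is the joint $L_2$-differentiability of Example~\ref{multilok}, with $L_2$-derivative $\nabla\sqrt f$. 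The main obstacle is exactly this transport of the partial expansions along the staircase $s_0,\dots,s_k$: the coordinatewise derivatives must be re-evaluated at the shifted base points $\cdot+s_{m-1}$, and the only mechanism that makes these shifted gradients collapse back to $\pxI{m}\xi$ is $L_2$-continuity of translation, which in turn hinges on the $L_2$-membership of the partial scores granted by the hypothesis.
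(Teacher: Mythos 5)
Your argument is correct, but it is worth pointing out that the paper does not actually prove this lemma at all: its ``proof'' is a one-line citation of Lemma~2.1 of Garel and Hallin (1995). What you have written is, in effect, a self-contained proof of that cited result, and it is the standard one: telescope the joint increment along the coordinate staircase $s_0,\dots,s_k$, use translation invariance of $\lambda^k$ to reduce each step to the one-dimensional partial hypothesis, and absorb the mismatch between $(\pxI{m}\xi)(\cdot+s_{m-1})$ and $\pxI{m}\xi$ via $L_2$-continuity of translation, which is available because the partial hypothesis already places $\pxI{m}\xi=-\Tfrac{1}{2}\Lambda_{f,m}\sqrt f$ in $L_2(\lambda^k)$. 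The small $o(h_m)=\Lo(|h|)$ bookkeeping and the $\Rightarrow$ direction by restriction to coordinate axes are both fine. Two remarks: first, you have (sensibly and silently) corrected what is evidently a typo in the statement --- the centering term should read $\sqrt f(x)\big(1-\Tfrac{h}{2}\Lambda_{f,j}(x)\big)$ and the squared $L_2$-remainder should be $\Lo(h^2)$, which is exactly the reading your proof uses; second, note that this telescoping device is special to the translation group (it is what lets you pull the shift $s_{m-1}$ out of the norm), which is why the companion scale statement in Lemma~\ref{GHLemScal} needs the multiplicative analogue via the dilation covariance of $\lambda^k$. Your proof buys self-containedness where the paper outsources the work; the paper's citation buys brevity and uniform treatment of the location and scale cases.
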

\begin{proof}{to Lemma~\ref{GHLem}}
\citet[Lemma~2.1]{G:H:95}
\end{proof}
\begin{Lem}\label{GHLemScal}
The multivariate location model~\ref{multscal} is $L_2$-differentiable iff it
is ``partially'' in each coordinate separately, i.e.; for each  $i,j=1,\ldots,k$ and each
$A=A^\tau\in \R^{k\times k}$
\begin{equation}
\int\!\! \Big(\!\sqrt{\det}(\EM_k+h\delta_{i,j}A)\sqrt{f}((\EM_k+h\delta_{i,j}A)x)-\sqrt{f}(x)(1+\Tfrac{1}{2}
\Lambda_{\sEM_k}(x))\Big)^2 \lambda^k(dx)\!=\!\Lo(h^2)
\end{equation}
 where  $\delta_{i,j}$ is the matrix in $\R^{k\times k}$ with but $0$ entries except
at position $i,j$.
\end{Lem}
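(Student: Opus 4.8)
The plan is to prove the substantive implication---that coordinate-wise partial $L_2$-differentiability forces full $L_2$-differentiability---the converse being immediate, since restricting the perturbation to a single pair $(i,j)$ and a single symmetric $A$ is a special case of the joint statement. First I would reduce to the reference parameter $\theta=\EM_k$: by the dilation relations of $\lambda^k$ already used before Proposition~\ref{klsl2}, the substitution $x\mapsto\theta^{-1}x$ intertwines the scale action at a general $\theta$ with the action at $\EM_k$ while multiplying every integrand by the constant Jacobian $|\det\theta|$, so both the full and the coordinate-wise conditions at $\theta$ reduce to the corresponding conditions at $\EM_k$.

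Next I would extract pointwise regularity of $\sqrt f$ from each coordinate-wise hypothesis, mirroring the proof of Lemma~\ref{GHLem} in \citet{G:H:95}. Each generator $\delta_{i,j}A$ produces a one-parameter scale flow $h\mapsto(\EM_k+h\delta_{i,j}A)x$ which, off its fixed-point set, is a shear or a pure coordinate dilation; straightening this flow by the coordinate change used in the one-dimensional scale reduction turns the partial statement into an instance of the location lemma, and hence, via the absolute-continuity apparatus of this paper (Definition~\ref{ACkdim} with Propositions~\ref{mazja} and~\ref{mazja2}), into the assertion that $\sqrt f$ is absolutely continuous in $k$ dimensions along that direction with the corresponding partial derivative in $L_2(\lambda^k)$. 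Running this over all pairs $(i,j)$ and all symmetric $A$ recovers the full gradient, so that $\sqrt f$ is a.c.\ in $k$ dimensions with $\nabla\sqrt f\in L_2(\lambda^k)$; equivalently, by Theorem~\ref{dasTHM}, the symmetrized score $\Lambda_{\sEM_k}=\tfrac12(\Lambda_f\,x^\tau+x\,\Lambda_f^\tau)-\EM_k$ lies in $L_2(F)$.

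To assemble the joint derivative I would expand $\sqrt{\det}(\EM_k+hA)=1+\tfrac{h}{2}\tr A+\Lo(h)$ and telescope $\EM_k+hA$ into a product of the elementary factors handled above, applying the per-direction estimate to each factor and bounding the leftover terms by Cauchy--Schwarz through the square-integrability of $\Lambda_{\sEM_k}$ just obtained. The trace part of the Jacobian furnishes exactly the $-\EM_k$ piece of the score, while the surviving first-order terms sum to $\tfrac12\,\tr(A\,\Lambda_{\sEM_k})\sqrt f$, which is the claimed $L_2$-derivative.

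The main obstacle I anticipate lies in this assembly rather than in the reduction. Unlike the translation group behind Lemma~\ref{GHLem}, the scale group of Example~\ref{multscal} is non-abelian, so telescoping a symmetric perturbation into elementary factors generates commutator corrections at order $h^2$, and the factor $\sqrt{\det}(\EM_k+hA)$ couples all coordinates at once. Showing that these cross terms stay $\Lo(h^2)$ in $L_2(\lambda^k)$ when combined with the per-factor remainders is the delicate point, and it is precisely here that the \emph{joint} square-integrability of $\Lambda_{\sEM_k}$ delivered by Theorem~\ref{dasTHM}---as opposed to mere coordinate-wise integrability---is what makes the estimate close.
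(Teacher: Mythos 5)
Your overall strategy---reduce to $\theta=\EM_k$, extract per-direction expansions, then telescope the joint perturbation into elementary factors---is exactly what the paper intends: its entire proof is the remark that one ``parallels'' \citet[Lemma~2.1]{G:H:95}, with the details delegated to \citet[Lemma~B.3.3]{Ru:01D}, and the Garel--Hallin argument is precisely this telescoping. So at the level of approach there is nothing to object to, and you have correctly located the extra difficulty relative to the location case in the non-commutativity of the scale group.

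Two points in your sketch do not close as written. First, the ``immediate'' converse is not a formal restriction: $\delta_{i,j}A$ is in general not symmetric, so the curves $h\mapsto\EM_k+h\delta_{i,j}A$ leave the parameter set $\Theta_6$ of Example~\ref{multscal}; full $L_2$-differentiability of the symmetric model does not specialize to these directions without an argument (in effect one must pass from the symmetric slice to the whole linear group, which again uses the factorization machinery). Second, and more substantively, in the assembly step the error you must control is the difference between the $j$-th per-factor expansion evaluated at the base point already moved by the preceding factors and the same expansion at the unmoved point; this difference is $\tfrac{h}{2}\,\big\||\det B_h|^{1/2}[\Lambda_j\sqrt f\,](B_h\cdot)-\Lambda_j\sqrt f\,\big\|_{L_2(\lambda^k)}$, and Cauchy--Schwarz with $\Lambda\in L_2(F)$ only bounds it by $\LO(h)$, whereas the claim requires $\Lo(h)$. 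What closes the estimate is $L_2(\lambda^k)$-continuity of the dilation action $B\mapsto|\det B|^{1/2}g(B\cdot)$ applied to $g=\Lambda_j\sqrt f$---the scale analogue of continuity of translation, which is the actual engine of the H\'ajek/Garel--Hallin proofs and which your sketch never invokes. Relatedly, the appeal to Theorem~\ref{dasTHM} for ``joint'' square-integrability of $\Lambda_{\sEM_k}$ is a detour: each directional score already lies in $L_2(F)$ directly from the corresponding partial expansion, and with finitely many coordinates there is no joint-versus-coordinatewise distinction; the genuine issue is the continuity just described, not integrability.
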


\begin{proof}{to Lemma~\ref{GHLemScal}}
With obvious translation we may parallel \citet[Lemma~2.1]{G:H:95}.
A proof is given in \citet[Lemma~B.3.3]{Ru:01D}.
\end{proof}

\begin{proof}{to Proposition~\ref{klsl2}}
Putting together Lemmas~\ref{GHLem} and \ref{GHLemScal}, we have
reduced the problem to the respective questions in the
one dimensional location resp.\ scale model, which is proven
in \citet{Ha:1972} (one-dimensional location)
and \citet[Ch.2, Sec.3]{Sw:80} (one-dimensional scale);
\citet[Prop.~3.1]{Ru:Ri:10} in addition shows that in the pure scale case, we
may allow for mass in $0$.
\end{proof}
\subsection{Proofs of Section~7}
\begin{proof}{to Proposition~\ref{maxunique}}
For fixed $0\not=a\in\R^p$, the proof goes through word by word as in \citet{Hu:81}, simply replacing
$f_t'$ by $a^{\tau}\partial_{\theta} \tilde f_t$ and $f_t$ by $\tilde f_t$:
By a monotone convergence argument it is shown that we may differentiate twice under the integral sign, giving
$$
\frac{d^2}{dt^2} {\cal I}_{\theta}(F_t;a)=\int 2 \left( \frac{a^{\tau}\partial_{\theta} \tilde f_1}{\tilde f_1}-
\frac{a^{\tau}\partial_{\theta} \tilde f_0}{\tilde f_0}\right)^2 \frac{\tilde f_0^2\tilde f_1^2}{\tilde f_t^3} \, d\lambda^k.
$$
So we conclude that
$a^{\tau}\partial_{\theta}\log \tilde f_0=a^{\tau}\partial_{\theta}\log \tilde f_1$ $\lambda^k(dx)$ a.e., i.e.,
$$
a^{\tau}\partial_{\theta} \iota_{\theta}\frac{\nabla f_0}{f_0} \!\circ\!\iota_{\theta}(x) +a^{\tau}\partial_{\theta} \log |\det \partial_x \iota_{\theta}(x)|=
a^{\tau}\partial_{\theta} \iota_{\theta}\frac{\nabla f_1}{f_1} \!\circ\!\iota_{\theta}(x) +a^{\tau}\partial_{\theta} \log |\det \partial_x \iota_{\theta}(x)|,
$$
where due to (d) up to a $\lambda^k$-null set $\frac{\nabla f_0}{f_0}\!\circ\!\iota_{\theta}=\frac{\nabla  f_1}{f_1}\!\circ\!\iota_{\theta}$,
and hence  up to a $\lambda^k$-null set
$\nabla \log \tilde f_0=\nabla \log \tilde f_1$.
Integrating this out w.r.t\ $x_i$, we get by (c) that $\tilde f_0(x)=c_i(x_{-i}) \tilde f_1(x)$ for $\lambda^{(k-1)}$ almost all $x_{-i }$.
Varying $i$, we see that for some $c>0$, $c_{i}(x_{-i})=c$ for all $i=1,\ldots,k$ and for $\lambda^k$ almost all $x$, and hence
$$
{\cal I}_{\theta}(F_1;a)=\int (\frac{a^{\tau}\partial_{\theta} \tilde f_1}{\tilde f_1})^2 \tilde f_1 \,d\lambda^k=
\int (\frac{a^{\tau}\partial_{\theta} \tilde f_0}{\tilde f_0})^2 c \tilde f_0 \,d\lambda^k=c{\cal I}_{\theta}(F_0;a)
$$
and $c=1$. As this holds for any $0\not=a\in\R^p$, the assertion for $\bar {\cal I}_{\theta}(F)$ follows.
\end{proof}
\begin{proof}{to Proposition~\ref{maxexist}}
As by Proposition~\ref{vlc}, for any $a\in\R^p$ the mapping $F \mapsto {\cal I}_{\theta}(F;a)$ is weakly lower-semicontinuous,
the same goes for the following, recursively defined mappings: Let $a_1\in \R^p$, $|a_1|=1$ realize
$$
\bar {\cal I}_{\theta;1}(F):=\bar {\cal I}_{\theta}(F)=\max{\cal I}_{\theta}(F;a),\quad a\in \R^p, |a|=1
$$
and for $i=2,\ldots, k$, assuming $a_j$ already defined for $j=1,\ldots,i-1$, let $a_i\in \R^p$, $|a_i|=1$ realize
$$
\bar {\cal I}_{\theta;i}(F):=\max{\cal I}_{\theta}(F;a),\quad a\in \sR^p,\, |a|=1,\, a \perp \{a_j\}_{j<i}.
$$
Then each of the $\bar {\cal I}_{\theta;i}(F)$, $i=1,\ldots,k$ is weakly lower-semicontinuous by the same argument as $\bar {\cal I}_{\theta}(F)$
and is strictly positive by assumption.
Hence for each $i=1,\ldots,k$, the mapping $F\mapsto 1\!/\bar {\cal I}_{\theta;i}(F)$ is weakly upper-semicontinuous, and so is the sum
$\sum_i 1\!/\bar {\cal I}_{\theta;i}(F)$. But this sum is just the trace of $[{\cal I}_{\theta}(F)]^{-1}$. The corresponding statement
as to the attainment of the maximum is shown just as Corollary~\ref{minatt}
\end{proof}
\section*{Acknowledgement}
The author wants to thank Helmut Rieder for his helpful suggestions.
\section*{References}
\bibliographystyle{p2ste}
%\bibliographystyle{plainnat}
%\bibliography{litdb1} % literature
%\input{fip}     % plots
%%%%%%%%%%%%%%%%%%%%%%%%%%
% -----------------------------------------------------------------------
%

%Contents:
%
%well definedness of linear operator for appl of Riesz theorem
%definition of FI of scale (1dim)
%definition of FI of location/scale (multi-dim)
%convexity and vague continuity
%minimization of FI by means of Lagrange-Techniques
%achieving the upper bound (tr/maxev)
%connection to differentiability notions in classical analysis (weak(Sobolev), L2-, distributional)
%absolute continuity (along axe
%
% -----------------------------------------------------------------------
\end{document}